\documentclass[12pt,leqno]{amsart}
\usepackage{tikz}
\usetikzlibrary{automata,positioning}
\usepackage[noBBpl]{mathpazo}
\usepackage{amssymb,amsthm,amsmath,amsfonts,latexsym}
\usepackage{microtype}
\usepackage{a4wide} 
\usepackage{graphicx} 
\graphicspath{{figures/}} 
\usepackage{booktabs} 
\usepackage[font=small,labelfont=bf]{caption} 
\usepackage{amsfonts, amsmath, amsthm, amssymb} 
\usepackage{wrapfig} 
\usepackage{multicol}
\usepackage{mathtools}
\newcommand\blfootnote[1]{%
  \begingroup
  \renewcommand\thefootnote{}\footnote{#1}%
  \addtocounter{footnote}{-1}%
  \endgroup
}
\usepackage{hyperref}

\newtheorem{theorem}{\sc Theorem}[section]

\newtheorem{thmx}{Theorem}

\newtheorem{question}[theorem]{Question}
\newtheorem{lemma}[theorem]{\sc Lemma}
\newtheorem{proposition}[theorem]{\sc Proposition}
\newtheorem{corollary}[theorem]{\sc Corollary}
\newtheorem{rem}[theorem]{\sc Remark}
\newtheorem{ex}[theorem]{\sc Example}

\newcommand{\T}{\mathcal{T}} 
\newcommand{\N}{\mathcal{N}}
\newcommand{\Am}{\mathcal{A}_{m}}  
\newcommand{\Amm}{\mathcal{A}_{m'}}
\newcommand{\Homeo}{\mathrm{Homeo}}  
\newcommand{\PSL}{\mathrm{PSL}_{2}}
\newcommand{\Symm}{\operatorname{Symm}}
\newcommand{\sgn}{\operatorname{sgn}}

\newcommand{\C}{\mathfrak{C}} 

\newcommand{\ZwrG}{\mathbb{Z}\wr_{X}G}
\newcommand{\id}{\mathrm{id}}

\title{On the Self-Similarity of Permutational Wreath Products and Their Embedding into Finitely Presented Simple Groups}

\author{Mailton Rego Almeida}
\address{Departamento de Matem\'atica, Universidade de Bras\'ilia,
Brasilia-DF, 70910-900 Brazil}
\email{(Almeida) mailtonalda16@gmail.com}

\author{Alex Carrazedo Dantas} 
\address{Departamento de Matem\'atica, Universidade de Bras\'ilia,
Brasilia-DF, 70910-900 Brazil}
\email{(Dantas) alexcdan@gmail.com}

\author{Altair Santos de Oliveira-Tosti}
\address{Colegiado de Matem\'atica, Universidade Estadual do Norte do Paran\'{a},
Cornélio Procópio-PR, 86.304-028, Brazil}
\email{(de Oliveira-Tosti) altair@uenp.edu.br \\ altairsot@tutanota.com}

\subjclass[2020]{Primary: 20F65. Secundary: 20E08, 20E22, 20E32.}

\keywords{Permutational wreath product, self-similar group, finitely presented simple group, Boone--Higman conjecture}

\begin{document}

\blfootnote{The first author acknowledges support from the Brazilian scientific agency CAPES. The second author was supported by DPI, FAPDF and FEMAT. The third author thanks the Department of Mathematics at the University of Bras\'{i}lia for their hospitality.}

\begin{abstract}
In this work, we study the self-similarity of permutational wreath products of the form \(A \wr_X G\), where \(A\) is a finitely generated abelian group and \(G\) is a self-similar group (the permutational wreath product \(A \wr_X G\) is also known as a lamplighter group). In the case where \(G\) is a non-torsion contracting group, we prove that, under certain conditions, the Scott--R\"over--Nekrashevych group \(V_m(\mathbb{Z}^d \wr_X G)\) is finitely presented and virtually simple. Moreover, we prove that \(\mathbb{Z}^d \wr_X G\) embeds into a finitely presented simple group. Furthermore, this provides a new family of groups that satisfy the Boone--Higman conjecture.
\end{abstract}

\maketitle

\section{Introduction} Since Rostislav I. Grigorchuk \textit{et al.} gave a 
counterexample to a strong version of the Atiyah's conjectures about the range of 
$L^2$-Betti numbers of closed manifolds using a self-similar representation of the 
lamplighter group $C_2 \wr \mathbb{Z}$ \cite{GSLZ}, the study of self-similar 
representations of wreath products has attracted considerable attention. In 
particular, several works have appeared on self-similar representations of 
lamplighter groups $A \wr_{X} G$, where $A$ is a finitely generated abelian group 
and $G$ is a self-similar group. For instance, self-similar representations of 
regular wreath products $B \wr \mathbb{Z}^d$, where $B$ is a finite abelian group, 
were obtained in \cite{BSu,DS1,KSS,SS}.

Andrew M. Brunner and Said N. Sidki defined the tree-wreathing operation and 
proved that the group \(\mathbb{Z}\wr\mathbb{Z}\) has a finite-state representation 
as a subgroup of the automorphism group of the binary tree \cite{BruSid1}. Later, 
Laurent Bartholdi implemented the tree-wreath product in GAP and 
explicated this representation
\cite[Section: 7.1.10 TreeWreathProduct (FR group), p. 70]{Bar}.
However, this representation is not self-similar.
The second author and Sidki proved the group \(\mathbb{Z}\wr\mathbb{Z}\) has no
transitive self-similar representation \cite{DS} and, along with Túlio M. G. 
Santos, they exhibited non-transitive self-similar representations of the 
lamplighter group $\mathbb{Z}^l \wr \mathbb{Z}^d$ in \cite{DSS}. More recently, the 
papers \cite{BarSid, DOS, DSS} started a study of the self-similarity of permutational wreath products $A \wr_{X} G$, where $A$ is a finitely generated abelian group and 
$G$ is a self-similar group.

Before we present the statements of our results, let us consider a group $G$ with a 
self-similar representation induced by the virtual endomorphisms $f_i\colon H_i\to G$ for 
$1 \leq i \leq s$ (see Subsection \ref{Subsec:Virtual-Endomorphisms} for definition). A subgroup $K$ of $G$ \emph{satisfies condition $(\Omega)$} if
\begin{enumerate}
 \item [(i)] $H_{\omega}\leq K$,
 \item [(ii)]$(K\cap H_i)^{f_i}\leq K$, for every $i=1,\ldots,s$ and
 \item [(iii)] if $h^{f_i}\in K$, then $h\in K$,
\end{enumerate}
where $H_{\omega}=\langle K\leq\cap_{i = 1}^s H_{i} \mid K^{f_i} \leq K,~\forall~i=1,2,\dots,s\rangle$ is the parabolic subgroup. 

Our first result concerns the self-similarity of a lamplighter group of the
form \(A \wr_{K\setminus G} G\), where \(K\setminus G\) represents the right
cosets of \(K\) in \(G\).

\begin{thmx}\label{thmx:ZwrG-SelfSimilar}
 Let $G$ be a group with a self-similar representation induced by the virtual endomorphisms $f_i\colon H_i\to G$ for $1 \leq i \leq s$. Let $K\leq G$ be an infinite-index subgroup satisfying condition $(\Omega)$. Then,
 \begin{enumerate}
 \item the permutational wreath product $B\wr_{K\backslash G}G$ is self-similar, where $B$ is a finite abelian group.\smallskip
 \item the permutational wreath product $A\wr_{K\backslash G}G$ is self-similar, where $A$ is an infinite finitely generated abelian group and \(G\) is a non-torsion group.
 \end{enumerate}
\end{thmx}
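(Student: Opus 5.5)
We construct explicit virtual endomorphisms of $W=A\wr_{K\backslash G}G$ extending the $f_i$. We then check the kernel criterion for self-similarity: the largest subgroup of $\bigcap_i \mathrm{dom}(F_i)$ that is $F_i$-invariant for all $i$ must be normal and is required to be trivial, i.e.\ the representation must be faithful. Throughout, write elements of $W$ as pairs $(\phi,g)$, where $\phi\colon K\backslash G\to A$ has finite support and $G$ acts by translating cosets.

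\textbf{Step 1: domains and maps.} For each $i$ put $\mathcal H_i=A^{(K\backslash G)}\rtimes H_i$. This is a finite-index subgroup of $W$ because $[G:H_i]<\infty$.

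\textbf{Step 2: an $H_i$-equivariant map on cosets.} Condition $(\Omega)$(ii) makes $Kh\mapsto Kh^{f_i}$ well defined on $K\backslash KH_i$, once we use $(K\cap H_i)^{f_i}\le K$. Condition (iii) makes it injective: if $h^{f_i}(h'^{f_i})^{-1}\in K$ then $hh'^{-1}\in K$. The remaining $H_i$-orbits of $K\backslash G$ are $K\backslash Kt_jH_i$ for finitely many double coset representatives $t_j$. On each of these we define an analogous equivariant map using the stabilizer $t_j^{-1}Kt_j\cap H_i$.

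\textbf{Step 3: the map on the base group.} Define $F_i(\phi,h)=(\phi^{\sigma_i},h^{f_i})$, where $\phi^{\sigma_i}$ pushes $\phi$ forward along the coset map $\tau_i$ of Step 2, restricted to the orbit $K\backslash KH_i$; on the other orbits the lamp values are sent to $0$, or, in part (2), are twisted. Equivariance of $\tau_i$ makes $F_i$ a homomorphism.

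\textbf{Step 4: faithfulness.} We must show the $F$-core of $\bigcap_i\mathcal H_i$ is trivial. Its projection to $G$ is a normal $f$-invariant subgroup, so it is trivial by self-similarity of $G$. What remains is a normal subgroup $N\le A^{(K\backslash G)}$ invariant under all $\sigma_i$.

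This is the main obstacle, and it is where infinite index and the hypotheses on $A$ and $G$ enter. A nonzero $\phi\in N$ has finite support. Normality gives all its $G$-translates, and infinite index lets us translate the support to cosets outside $K\backslash KH_\omega$-type orbits. Iterating the $\sigma_i$ should then force the support to collapse to $0$, which gives a contradiction.

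\textbf{Part (1), $B$ finite.} We will likely need to enlarge the virtual endomorphism by tensoring with a copy of $B$ on a finite set, so that the state-closed kernel vanishes. Finiteness of $B$ lets lamp values be absorbed into the finite alphabet.

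\textbf{Part (2), $A$ infinite.} Here the lamps cannot be put into the alphabet. Instead we use an element $g\in G$ of infinite order to define a $\mathbb Z$-twist, sending a generator of $A$'s free part to a translate along $\langle g\rangle$; this mimics the known self-similar representations of $\mathbb Z^l\wr\mathbb Z^d$ from \cite{DSS}. I would first try to verify that the resulting $F_i$ have trivial core, again reducing to the support-shrinking argument.
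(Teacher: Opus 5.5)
Your Steps 1--3 match the paper's maps $\rho_i$: push lamps forward along $Kh\mapsto Kh^{f_i}$ on $K\backslash KH_i$, kill the lamps on the other $H_i$-orbits, and send $h\mapsto h^{f_i}$. Your Step 2 is the paper's lemma that $\lambda_i$ is well defined and injective. The gap is in Step 4: with these maps alone the representation is never faithful. The base $A^{(K\backslash G)}$ is normal in $W$ and lies in every $\mathcal H_i=A^{(K\backslash G)}\rtimes H_i$. It also satisfies $F_i(A^{(K\backslash G)})\le A^{(K\backslash G)}$ for all $i$, so the whole base lies in the $F$-core. Concretely, a lamp sitting at the coset $K$ is fixed by every $\sigma_i$, since $K\cdot 1^{f_i}=K$, so no support-shrinking can happen. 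There is also a logical slip. An iterate collapsing to $0$ is not a contradiction, because $0\in N$ anyway. You need a nontrivial element of $N$ lying outside $\bigcap_i\mathcal H_i$ or inside $G$, and maps sending the base into the base can never produce one. (The extra equivariant maps on the other orbits in Step 2 are also not supported by $(\Omega)$, which only controls $K\cap H_i$.)

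The missing idea is a virtual endomorphism that carries lamps out of the base.

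For $\mathbb Z\wr_{K\backslash G}G$, the paper adds a further virtual endomorphism defined on the whole group: $\rho\colon a^{Kg}\mapsto x$, $g\mapsto 1$, with $x\in G$ of infinite order. This is where the non-torsion hypothesis enters. Once $N$ lies in the base (your projection argument), $N^{\rho}\le N\cap G=1$ forces every element of $N$ to have total lamp sum $0$, which excludes support $1$. For support $r\ge 2$ the paper uses minimality. After translating so that $K$ is in the support, dropping a lamp off $KH_i$ would give a smaller nonzero element of $N$. So by injectivity of $\lambda_i$ the support size is preserved by every iterate $\rho_{i_1}\cdots\rho_{i_k}$. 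The paper then places the support cosets in $KH_\omega\subseteq K$, so the support is a single coset.

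For finite $B$, the paper instead shrinks the domain to $[B^K,G]H_i$, where $[B^K,G]=\ker\Sigma$ is the augmentation kernel. This has index $|B|\,[G:H_i]$ and is the concrete form of your ``absorbing $B$ into the alphabet''. The map sends $[a^K,g]\mapsto a^{-K}$ for $g\notin KH_i$. Killing lamps now produces elements of nonzero sum, i.e.\ elements outside the domain, and this plays the role of $\rho$.

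Your Part (2) twist, placed inside $F_i$, does not achieve this. If it sends lamps to lamps, the base is again invariant. If it sends them to powers of $g\in G$ while $h\mapsto h^{f_i}$, it is in general not a homomorphism. Those powers of $g$ must commute with the pushed-forward lamps, which forces them to fix every coset $Kh^{f_i}$. They must also be compatible with conjugation by $H_i^{f_i}$. The paper keeps the infinite-order element in the separate map $\rho$, which kills $G$. It then obtains (2) from the $\mathbb Z$ and finite cases via the induction of Proposition~5.2 in \cite{DSS}.
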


Michael Kapovich affirmed that Zoran Šuni\'{c} showed that the projective special linear group $\PSL(\mathbb{Z})$ 
admits a transitive self-similar representation of degree \(3\) in the automorphism group of the ternary tree \cite{K}. We recall that strongly self-similar
means that the parabolic subgroup is trivial. Motivated by this result, 
we prove Theorem \ref{thmx:PSL-FinitelyPresentedTransitiveSelfSimilar}, stated 
below. 

\begin{thmx}\label{thmx:PSL-FinitelyPresentedTransitiveSelfSimilar}
Given a prime number \(p\), it holds that
\begin{enumerate}
    \item $\PSL(\mathbb{Z}[1/p])$ is a transitive self-similar group if \(p\geq3\);
    \item $\PSL(\mathbb{Z}[1/2])$ is a transitive strongly self-similar group; 
    \item Since \(\PSL(\mathbb{Z})\) satisfies the conditions \((\Omega)\), \(A\wr_{X}\PSL(\mathbb{Z}[1/2])\) is a self-similar group, where \(A\) is a finitely generated abelian group and 
    \(X=\PSL(\mathbb{Z})\setminus \PSL(\mathbb{Z}[1/2])\).
\end{enumerate}
\end{thmx}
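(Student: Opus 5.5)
We use the standard Nekrashevych–Sidki criterion: a group $G$ has a transitive self-similar representation of degree $m$ exactly when there is a subgroup $H$ of index $m$ and a virtual endomorphism $f\colon H\to G$. The representation is faithful (state-closed) when the $f$-core is trivial, that is, when no nontrivial normal subgroup $N$ of $G$ satisfies $N\le H$ and $N^f\le N$. Strong self-similarity means the parabolic subgroup $H_\omega$ itself is trivial.

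The natural choice is conjugation by $\mathrm{diag}(p,1)$ (projectively). Set
\[
H=\PSL(\mathbb{Z}[1/p])\cap \Gamma_0(p),
\]
where $\Gamma_0(p)$ consists of the matrices whose lower-left entry lies in $p\,\mathbb{Z}[1/p]$ as a congruence condition. Define
\[
f\colon \begin{pmatrix} a&b\\ c&d\end{pmatrix}\mapsto \begin{pmatrix} a&pb\\ c/p&d\end{pmatrix}.
\]
This is conjugation by $\mathrm{diag}(p,1)$ restricted to $H$, so it is a homomorphism into $\PSL(\mathbb{Z}[1/p])$. However, since $p$ is a unit in $\mathbb{Z}[1/p]$, the congruence condition "mod $p$" degenerates. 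So I would instead use a prime $q\neq p$: take $H=\Gamma_0(q)$ in $\PSL(\mathbb{Z}[1/p])$, of index $q+1$ (the number of points of $\mathbb{P}^1(\mathbb{F}_q)$, via reduction mod $q$ of $\mathbb{Z}[1/p]$), and let $f$ be conjugation by $\mathrm{diag}(q,1)$. For $p\ge3$ take $q=2$, giving degree $3$ in analogy with Šunić's $\PSL(\mathbb{Z})$ example. For $p=2$ take $q=3$, giving degree $4$.

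For faithfulness, suppose $N$ is a normal subgroup inside the $f$-core. Since $\PSL(\mathbb{Z}[1/p])$ has the congruence subgroup property (Serre) and is not virtually abelian, any nontrivial normal subgroup has finite index. It would therefore contain a principal congruence subgroup $\Gamma(M)$ with $q\nmid M$ possible to arrange, or we argue directly: $N\le \bigcap_k \mathrm{dom}(f^k)$ forces every element of $N$ to have lower-left entry divisible by all powers of $q$, hence equal to $0$. So $N$ consists of upper triangular matrices, which is impossible for a finite-index normal subgroup. Hence $N=1$, proving (1) and the faithfulness part of (2). Transitivity comes automatically from the construction via cosets of $H$.

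For strong self-similarity in (2), the main obstacle is computing $H_\omega$: it is the largest subgroup $L$ with $L\le H$ and $L^f\le L$, where $H_\omega$ lies inside the domain of every iterate. By the argument above, $L$ consists of upper triangular elements $\pm\begin{pmatrix}u&b\\0&u^{-1}\end{pmatrix}$ with $u\in\{\pm 2^k\}$. On such an element $f$ acts by $b\mapsto 3b$, and $L^f\le L$ with $L$ also satisfying the inverse-image condition... In fact $\{\begin{pmatrix}1&b\\0&1\end{pmatrix}\}$ is invariant, so $H_\omega$ would be nontrivial. So I would adjust the choice: use $f$ composed with an inner twist, for instance conjugation by $\begin{pmatrix}3&1\\0&1\end{pmatrix}$-type elements or a non-diagonal element of determinant $q$, chosen so that the $f$-invariant parts of the upper unipotent and diagonal tori are killed. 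Verifying trivial $H_\omega$ for such a twisted $f$ is the delicate step. I would first try $f$ equal to conjugation by $\begin{pmatrix}0&1\\3&0\end{pmatrix}$ on $\Gamma_0(3)$, which swaps upper and lower triangular parts, so any invariant subgroup must be both upper and lower triangular mod $3$-adic limits. That leaves only the diagonal part, which is finite-order or expanded, hence trivial.

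For (3), it suffices by Theorem \ref{thmx:ZwrG-SelfSimilar} to check that $K=\PSL(\mathbb{Z})$ has infinite index in $\PSL(\mathbb{Z}[1/2])$, which holds because the entries $1/2^k$ are unbounded, and that $K$ satisfies $(\Omega)$. Condition (i) follows since $H_\omega=1$ by (2). Condition (ii) requires that conjugation by the chosen determinant-$3$ element map $K\cap\Gamma_0(3)$ into $K$, which holds because $\Gamma_0(3)$ entries are integral after conjugation. For (iii), if $h^{f}$ is integral then $h=f^{-1}(h^f)$ must be integral. Here $f^{-1}$ divides an entry by $3$, and the result lies in $\mathbb{Z}[1/2]$, and $3$ is not invertible there. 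So integrality of $h^f$ together with $h\in\mathbb{Z}[1/2]$-entries forces $h$ to have integral entries, since $3b\in\mathbb{Z}$ and $b\in\mathbb{Z}[1/2]$ imply $b\in\mathbb{Z}$. Since $\PSL(\mathbb{Z}[1/2])$ contains elements of infinite order, part (2) of Theorem \ref{thmx:ZwrG-SelfSimilar} also applies to infinite $A$.
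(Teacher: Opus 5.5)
Your item (1) is correct. It is the paper's construction up to conjugation by the Weyl element: the paper uses $\begin{pmatrix}a&2b\\c&d\end{pmatrix}\mapsto\begin{pmatrix}a&b\\2c&d\end{pmatrix}$ on the index-$3$ subgroup whose upper-right entry is even. Likewise, conjugation by $\mathrm{diag}(3,1)$ does give a faithful transitive degree-$4$ representation of $\PSL(\mathbb{Z}[1/2])$, as the paper remarks.

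One correction in the faithfulness argument. The congruence subgroup property does not say that nontrivial normal subgroups have finite index (that is a normal subgroup theorem), and you need no such result. Once $N$ consists of upper triangular matrices, its conjugate by $\begin{pmatrix}0&-1\\1&0\end{pmatrix}$ is lower triangular, so $N$ is diagonal. A nontrivial diagonal element stops being diagonal after conjugation by a unipotent, so $N=1$; this is the paper's argument.

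The genuine gap is strong self-similarity in (2), and with it condition (i) of $(\Omega)$ in (3). Your proposed twist fails completely. The matrix $W=\begin{pmatrix}0&1\\3&0\end{pmatrix}$ satisfies $W^2=3I$, so $f$ is the involution $\begin{pmatrix}a&b\\c&d\end{pmatrix}\mapsto\begin{pmatrix}d&c/3\\3b&a\end{pmatrix}$ of $\Gamma_0(3)$. Its image always has lower-left entry $3b\in 3\mathbb{Z}[1/2]$, so $f(\Gamma_0(3))=\Gamma_0(3)$ and hence $H_\omega=\Gamma_0(3)$. The swap undoes the scaling rather than compounding it, and even the diagonal torus is $f$-invariant.

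For both of your explicit candidates, $H_\omega$ contains $\begin{pmatrix}1&1/2\\0&1\end{pmatrix}\notin\PSL(\mathbb{Z})$. So $(\Omega)$(i) fails and Theorem~\ref{thmx:ZwrG-SelfSimilar} cannot be invoked in (3). Your checks of (ii) and (iii) are fine, since they hold for conjugation by any integral matrix of determinant $\pm3$, but they do not rescue (i).

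What is missing is a determinant-$3$ matrix with no rational eigenline and no scalar power, plus a proof that no nontrivial $h$ keeps every $P^{-n}hP^{n}$ in $\PSL(\mathbb{Z}[1/2])$. The paper takes $P=\begin{pmatrix}0&3\\-1&1\end{pmatrix}$, whose characteristic polynomial is $x^2-x+3$. It sets $H$ to be the matrices with upper-right entry in $3\mathbb{Z}[1/2]$ and $f(h)=P^{-1}hP$. Tracking $3$-adic valuations of the entries of $P^n$ through the recurrence $x_{n+1}=x_n-3x_{n-1}$ forces $a-d=-b=c$. A Diophantine lemma ($2^{2k+2}-11m^2$ is a square only for $m=0$) then forces $h=1$.

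A more conceptual finish for the same $P$ runs as follows. Over $\mathbb{Q}_3$ the eigenvalues of $P$ have valuations $0$ and $1$. So $3$-integrality of all $P^{-n}hP^{n}$ forces $h$ to preserve the eigenline of the unit eigenvalue. That line is defined over $\mathbb{Q}(\sqrt{-11})$ but not over $\mathbb{Q}$, so by Galois conjugation $h$ preserves both eigenlines and commutes with $P$. Hence $h$ corresponds to a norm-one unit of $\mathbb{Z}[\tfrac12][\tfrac{1+\sqrt{-11}}{2}]$, which must be $\pm1$ because $2$ is inert in $\mathbb{Q}(\sqrt{-11})$.
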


It is worth pointing out that the group \(\mathbb{Z}\wr_{X}\PSL(\mathbb{Z}[1/2])\)
is mentioned by Yves Cornulier as an example of non-finitely presented residually 
finite wreath product on which \(\PSL(\mathbb{Z}[1/2])\) acts primitively and with finitely generated stabilizers over 
\(X=\PSL(\mathbb{Z})\setminus \PSL(\mathbb{Z}[1/2])\) \cite[p. 106]{Cornulier2006}.

Recently, Dessislava H. Kochloukova and Sidki
studied the Dantas--Sidki example, that is, the transitive self-similar representation of the lamplighter group
\(C_{p}\wr\mathbb{Z}^{d}\) (see \cite{DS1}), having extended this construction to a finitely
presented metabelian self-similar group that contains a copy of \(C_{p} \wr \mathbb{Z}^{d}\) as a subgroup
\cite{KS}. Considering that Matthew C. B. Zaremsky proved that every finitely presented self-similar group embeds in a finitely presented simple group \cite{Zaremsky2025b}, it follows that the Kochloukova--Sidki construction
embeds into a finitely presented simple group and satisfies the 
Boone--Higman conjecture (which asserts that every finitely generated group has solvable word problem if and only if it embeds into a finitely presented simple group \cite{Boone1973,BooneHigman1974}). On the other hand, James Belk and Francesco Matucci
showed that every contracting self-similar group embeds into a
finitely presented simple group (hence satisfies Boone--Higman conjecture) \cite{BelkMatucci2025}. In this sense, our next result
provides examples of self-similar groups which satisfy the result of Belk and Matucci. First, let us recall that the self-similar 
representation of \(G\) depends on virtual endomorphisms \(f_{i}\) 
and the transversals $T_i=\{t_{i1}, t_{i2},\dots,t_{im_i}\}$ of \(H_{i}\) in 
\(G\), where \(1\leq i \leq s\). In Subsection \ref{Subsec4.1}, we introduce examples of this result.

\begin{thmx}\label{thm:BGcontratil}
Let \(G\) be a contracting self-similar group that satisfies the 
hypotheses of Theorem~\ref{thmx:ZwrG-SelfSimilar}.
Suppose that \(G\) is self-similar with respect to the endomorphisms \(f_i\) and the transversals, $T_i = \{t_{i1}, t_{i2},...,t_{im_i}\}$, where $t_{ij}$ are chosen in $K \sqcup \left(G\backslash H_{i}K \right)$, and that $K \leq N_G(H_i)$ for every $1\leq i\leq s$, 
then the wreath product 
$B\wr_{K\backslash G}G$ is a contracting self-similar group, for every finite abelian group \(B\).
\end{thmx}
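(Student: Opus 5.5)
The plan is to make the self-similar representation of $W=B\wr_{K\backslash G}G$ from Theorem~\ref{thmx:ZwrG-SelfSimilar}(1) explicit and then read off a nucleus. In that construction $W$ is self-similar with respect to the virtual endomorphisms $F_i\colon L_i\to W$, $1\le i\le s$. Here $L_i=B^{(K\backslash G)}\rtimes H_i$, and $F_i$ extends $f_i$ on $H_i$. On the base group, $F_i$ sends the lamp at the coset $K$ to the lamp at $K$ and kills, or shifts according to $f_i$, the lamps at the other cosets. Condition $(\Omega)$ is what makes $F_i$ well defined and keeps the parabolic subgroup of $W$ inside the core-free part, as in the proof of Theorem~\ref{thmx:ZwrG-SelfSimilar}. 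The transversal of $L_i$ in $W$ is again $T_i$, and the action is computed by the usual recursion
\[
w\cdot(t_{ij}\,u)=t_{i,j'}\,\bigl((t_{ij}w t_{i,j'}^{-1})^{F_i}\cdot u\bigr).
\]
So the sections of $w$ are the elements $(t_{ij}wt_{ij'}^{-1})^{F_i}$.

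The first step is to compute the sections of the generators $b\delta_K$ (the lamp $b\in B$ at coset $K$) and of $g\in G$. For $g\in G$ the sections are exactly those of $G$, since $F_i$ restricts to $f_i$. For a lamp $b\delta_K$, conjugating by $t_{ij}$ gives $b\delta_{Kt_{ij}^{-1}}$, and this is where the hypotheses on the transversal enter. If $t_{ij}\in K$, the lamp stays at $K$ and its image under $F_i$ is $b\delta_K$ again. If $t_{ij}\in G\setminus H_iK$, the coset $Kt_{ij}^{-1}$ does not meet $H_i$ in the relevant sense. Using $K\le N_G(H_i)$, so that $H_iK=KH_i$ is a subgroup, the lamp lies at a coset that $F_i$ sends to the identity. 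Hence every section of a lamp generator is either trivial or the same lamp generator. More generally, a base element $\beta$ supported on a finite set $S$ of cosets has sections supported on sets whose ``size'' does not grow. Normality of $K$ in $KH_i$ makes cosets $Kg$ with $g\in KH_i$ correspond to cosets $K f_i(h)$, so the support is transported through $f_i$.

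The second step is to prove contraction. I will write a general element as $w=\beta g$ with $\beta=\prod_k b_k\delta_{Kg_k}$, and measure it by $\ell(g)+\sum_k \ell(g_k)$, where $\ell$ is word length in $G$ and the $g_k$ are chosen coset representatives of minimal length. A lamp at $Kg_k$ is $g_k^{-1}(b_k\delta_K)g_k$. Its sections are products of sections of $g_k^{-1}$, of $b_k\delta_K$ (which are trivial or $b_k\delta_K$) and of $g_k$. For long enough words the contraction of $G$ shrinks these sections by the factor $\lambda<1$ for $G$, plus a constant. Finiteness of $B$ means only finitely many lamp values occur. I then conclude that every $w$ has all sections at some level lying in
\[
\mathcal N'=\{\beta g:\ g\in \mathcal N_G,\ \beta \text{ supported on cosets } Kh \text{ with } h \text{ in a bounded ball of } G\},
\]
and this set is finite. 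Equivalently, I may check the contraction inequality $\limsup \ell(w|_v)^{1/|v|}<1$ using the standard criterion for contracting groups.

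The main obstacle is keeping track of lamp supports. A product of many lamps could a priori have sections whose supports do not shrink, because distinct lamps land at cosets via different section paths. I will handle this by showing that for each vertex the section of $\beta$ is itself a product of lamps at the cosets $K\,(g_k|_{v})$ for those $k$ whose path avoids the killing branch. Then the supports are controlled by the sections of the $g_k$, which contract. Finiteness of $B$ is used here so that collisions of lamps at the same coset stay within finitely many values.
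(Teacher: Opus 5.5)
There is a genuine gap at the very start: the self-similar structure you attribute to Theorem~\ref{thmx:ZwrG-SelfSimilar}(1) is not faithful. You take $L_i=B^{(K\backslash G)}\rtimes H_i$ and let $F_i$ send lamps to lamps or to $1$. Then the base group $B^{(K\backslash G)}$ is a normal subgroup of $W$, lies in every $L_i$, and satisfies $(B^{(K\backslash G)})^{F_i}\le B^{(K\backslash G)}$. So it lies in the $\mathbf{F}$-core, i.e.\ in the kernel of the induced action on the tree.

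Your own Step~1 exhibits this. The lamp $b\delta_K$ lies in the normal subgroup contained in every $L_i$, so it fixes the first level, and its sections are $1$ or $b\delta_K$. By induction it acts trivially on the whole tree. What your argument would show to be contracting is therefore the image of $W$, which is just a copy of $G$, not $W$ itself. This is why, in the $\mathbb{Z}$ case, the paper adds the extra endomorphism $\rho$ sending lamps to an element of infinite order. That device is not available for finite $B$.

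The paper instead uses $\mathcal{H}_i=[B^K,G]H_i$, where $[B^K,G]=\ker\Sigma$ is the augmentation kernel of the base. Thus $[\mathcal G:\mathcal H_i]=|B|m_i$, with transversal $\{a_r^Kt_{ij}\}$, and $\rho_i([a^K,g])=a^{-K}$ when $Kg\notin KH_i$. This choice makes lamps move the first level and destroys the invariance of the base. It also changes your Step~1 in two ways:
\begin{itemize}
\item A top element $g$ no longer has ``exactly the sections of $G$''. Its sections are $a_r^K\,\theta(g,t_j)^{f}\,a_r^{-K}$, which is why the paper's candidate nucleus is $B^{S_G}S_G^{B}B^{S_G}$ rather than a set of lamps times $\mathcal{N}(G)$.
\item A lamp product $a_1^{Kg_1}\cdots a_l^{Kg_l}$ has sections $(a_1\cdots a_l)^{-K}$, or products of commutators $[a_\lambda^K,(\theta(g_\lambda,t_j)^{f})^{-1}]$, or mixtures of these. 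These are lamps at cosets indexed by sections of the $g_\lambda$.
\end{itemize}
Once you work in this representation, your support-tracking idea combined with $S_n(ab)\subseteq S_n(a)S_n(b)$ is essentially the paper's Case~1/Case~2 argument.

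Two smaller points. The hypothesis $K\le N_G(H_i)$ gives $H_i\trianglelefteq H_iK$, not normality of $K$. Also, the word-length criterion and your ``bounded ball'' nucleus both presuppose that $G$ is finitely generated, which is not assumed. You should argue directly with the finite set $S_G$, as the paper does.
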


The mentioned embeddings provided by Zaremsky in \cite{Zaremsky2025b} 
and by Belk and Matucci in \cite{BelkMatucci2025} were constructed 
from the \emph{Scott--R\"over--Nekrashevych group}\footnote{In 
literature, they are usually called R\"{o}ver--Nekrashevych groups.}
(SRN-group, for short) associated to a self-similar automorphism group \(G\) of the one-rooted \(m\)-ary tree, with \(m \ge 2\). This kind of group, denoted by \(V_m(G)\), 
is a group of homeomorphisms of the Cantor set generated by Higman--Thompson group \(V_m\) (see \cite{Higman1974}) and the self-similar 
group \(G\), as we briefly recall in Subsection~\ref{Subsec:SRN-groups}. These groups have attracted attention of researchers, who 
have used them to prove the existence of simple groups with certain 
finiteness properties 
\cite{LlosaIsenrichScheslerWu2025,SkipperWitzelZaremsky2019} and to
construct the first examples of groups with at least exponential
Dehn functions \cite{Zaremsky2026}. Moreover, these groups are 
important examples of rational similarity groups (RSGs), introduced  by Belk, Collin Bleak, Matucci, and Zaremsky in \cite{BelkBleakMatucciZaremsky2026}, on which they proved that the Boone--Higman conjecture holds for all hyperbolic groups.

The case of embedding of \(\mathbb{Z}^{d}\wr_{X} G\) into a finitely
presented simple group may not happen as \(B\wr_{X}G\). For instance,
Kochloukova and Melissa S. Luiz examined several families of finitely 
presented metabelian groups which contains \(\mathbb{Z} \wr \mathbb{Z}\) as a subgroup but are not self-similar \cite{KSL}. In this direction, we provide a new family of groups for which the Boone--Higman
conjecture holds. Specifically, in Theorem \ref{thmx:SRN-FinitePresentation}, we prove that the permutational wreath
product \(\mathbb{Z}^{d} \wr_X G\), where \(G\) is a non-torsion 
contracting self-similar group, embeds into a finitely
presented simple group. To the best of our knowledge, this is the first result establishing the conjecture
for this specific class of permutational wreath products, thereby providing a
robust new family of explicit evidence in support of the conjecture.

\begin{thmx}\label{thmx:SRN-FinitePresentation}
 Let \(G\) be a non-torsion contracting self-similar group satisfying the
 hypotheses of Theorem~\ref{thmx:ZwrG-SelfSimilar}. Then,
 \begin{enumerate}
 \item the SRN-group \(V_{m}(\mathbb{Z}^{d}\wr_{X}G)\) is finitely presented and has finite abelianization.\smallskip
 \item the permutational wreath product \(\mathbb{Z}^{d}\wr_X G\) embeds into a finitely presented simple group.
 \end{enumerate}
\end{thmx}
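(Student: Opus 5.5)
The plan is to reduce everything to known finiteness and simplicity results for Scott--R\"over--Nekrashevych groups, and in particular to the criterion of Belk and Matucci \cite{BelkMatucci2025}. A related ingredient is Nekrashevych's theorem that \(V_m(H)\) is finitely presented whenever \(H\) is a contracting self-similar group whose nucleus satisfies the appropriate finiteness condition. A concrete route goes through the Belk--Matucci embedding of contracting groups into finitely presented simple groups via \(V_m\). Since \(\mathbb{Z}^d\) is infinite, the group \(\mathbb{Z}^d\wr_X G\) is not contracting, so Theorem~\ref{thm:BGcontratil} cannot be applied directly. Instead I will use Theorem~\ref{thmx:ZwrG-SelfSimilar}(2) to realize \(W=\mathbb{Z}^d\wr_X G\), with \(X=K\backslash G\), as a self-similar group acting on the \(m\)-ary tree. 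I will then show that it embeds in a self-similar group to which the finite-presentation machinery applies.

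First, write down the self-similar representation from Theorem~\ref{thmx:ZwrG-SelfSimilar}(2) explicitly. Because \(G\) is non-torsion, we can pick an infinite-order element \(g_0\in G\). The generators of \(\mathbb{Z}^d\) are encoded through \(g_0\): the lamp generators act like translations along the \(\langle g_0\rangle\)-orbit, in the same way that \(\mathbb{Z}\wr\mathbb{Z}\) is self-similar in \cite{DSS}. The aim is to check that the states of the lamp generators lie in a finite set of the form \(\{a\,g : a\in \mathbb{Z}^d,\ g\in \mathcal N\}\), where \(\mathcal N\) is the nucleus of \(G\), up to bounded lamp configurations. This gives a ``finitely many types of sections'' property, weaker than contraction but enough to bound the relevant complex. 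I would then follow the strategy of Skipper--Witzel--Zaremsky and Zaremsky \cite{Zaremsky2025b}. That strategy builds a Stein--Farley type complex for \(V_m(W)\) on which \(V_m(W)\) acts with cell stabilizers that are finite extensions of direct products of copies of \(W\) intersected with appropriate stabilizers. Finite presentability of \(V_m(W)\) then follows from two facts: these stabilizers are finitely presented modulo the nucleus relations, and the descending links are simply connected. Alternatively, if \(W\) itself embeds into a finitely presented self-similar group \(\widehat W\), Zaremsky's theorem gives part (2) directly, and part (1) would follow by applying the same complex argument to \(\widehat W\).

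For the abelianization in (1), I will use the standard description: \(V_m(W)^{ab}\) is a quotient of \(V_m^{ab}\oplus W^{ab}/\langle w - \sum_x w|_x\rangle\). Since \(V_m^{ab}\) is finite, it suffices to show that the image of \(\mathbb{Z}^d\) becomes torsion. The relation \(a=\sum_x a|_x\) applied to a lamp generator expresses \(a\) as \(a\) plus the images of \(g_0\) and of sections in \(G\). Together with the finiteness of the abelianization contribution of the contracting group \(G\) (Belk--Matucci), this should force finiteness. Part (2) then follows by the usual argument. The commutator subgroup of \(V_m(W)\) is simple by Nekrashevych's criterion, and it has finite index, hence is finitely presented. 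Using \(m\)-adic conjugation, \(W\) embeds in a copy supported on a cone, and that copy lies inside the commutator subgroup.

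The main obstacle is the finite presentation of \(V_m(W)\), because \(W\) is not finitely presented (as Cornulier's example shows) and is not contracting. My first attempt would be to prove that the sections of the lamp generators stay within \(\mathbb{Z}^d\cdot\mathcal N\). I would then show that \(V_m(W)\) is generated by \(V_m(G)\), which is finitely presented by Belk--Matucci, together with a single cone-supported copy of \(\mathbb{Z}^d\). The required relations should reduce to three families: commutation of the \(\mathbb{Z}^d\) copy with the \(V_m\)-elements of disjoint support, conjugation by \(g_0\), and the wreath recursion. Each family is finite by the contraction of \(G\).
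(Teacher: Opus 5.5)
There is a genuine gap. Your proposal lists possible routes, but the step that carries item (1) is never carried out, and the routes you name do not work.

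\textbf{Finite presentation.} The Skipper--Witzel--Zaremsky (Stein--Farley) argument requires the cell stabilizers to be finitely presented. These stabilizers are essentially finite extensions of finite products of copies of $W=\mathbb{Z}^d\wr_X G$, and $W$ is in general not finitely presented. ``Finitely presented modulo the nucleus relations'' is not a hypothesis that Brown's criterion can use.

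Your fallback through a finitely presented self-similar overgroup $\widehat W$ is never constructed. Even if it were, finite presentability of $V_m(\widehat W)$ says nothing about $V_m(W)$, which is what item (1) asserts. Also, $\{ag : a\in\mathbb{Z}^d,\ g\in\N\}$ is an infinite set, so it gives no usable finiteness. The paper instead works with the finite extended nucleus $\N(G)\cup Q(\gamma)$.

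Your last paragraph points in the direction the paper actually takes. It adapts Nekrashevych's combinatorial proof for contracting groups, with generators $\mathcal S\cup\{L(g) : g\in\N(G)\cup Q(\gamma)\}$. The relations are (C), (N), (S), and an extra family (CB), $[L(\gamma)^{L(f)},L(\gamma)^{L(g)}]=1$, which says that lamps at different positions commute. Nothing in your three families does this job. Both $L(\gamma)$ and $L(\gamma)^{L(g)}$ are supported on the cone $C_1$, so commutation with disjointly supported elements cannot give it.

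More importantly, the real content is showing that the chosen relations imply \emph{every} relation of $V_m(W)$. The paper does this with a normal-form argument:
\begin{enumerate}
\item it rewrites a trivial word as $\overline{L}_{v_1}(g_1)\cdots\overline{L}_{v_\ell}(g_\ell)h$ with all $|v_i|$ equal, using the splitting relations (S');
\item it treats each cone separately;
\item it collects the $\gamma$-factors using the commutation of lamp conjugates, and reduces to relations in $G$ handled by (N).
\end{enumerate}
You assert that ``each family is finite by contraction'' and stop there.

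\textbf{Abelianization.} This step fails, and your own computation shows why. Suppose the sections of a lamp generator $a$ are $a$ itself once, together with elements of $G$. This is what happens, for instance, for $K=1$ and a single virtual endomorphism in the construction of Theorem~\ref{thmx:ZwrG-SelfSimilar}. Then the relation $\bar a=\sum_x\overline{a|_x}$ reads $\bar a=\bar a+(\text{terms from } G)$. So $\bar a$ cancels and is not constrained at all, and $V_m(W)^{ab}$ surjects onto $\mathbb{Z}$.

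Contraction of $G$ does not help either. For the binary odometer $a=(1,a)\sigma$ the only relation is $\bar a=\bar a$, and $V_2(\langle a\rangle)^{ab}\cong\mathbb{Z}$.

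The missing idea is Zaremsky's re-embedding, which the paper uses. Realize $W$ self-similarly on the $rm$-ary tree by repeating each section $r$ times, with $r$ even. The relations then become $\bar g=r\sum_x\overline{g|_x}$ on the finite generating set $\N=\{a_1,\dots,a_n\}$. Hence $V_{rm}(W)^{ab}\cong\mathbb{Z}^n/\operatorname{Im}(I_n-rM)$, which is finite whenever $\det(I_n-rM)\neq0$, that is, for all but finitely many $r$.

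\textbf{Embedding in (2).} The cone copy $L_v(W)$ need not lie in the commutator subgroup. In the abelianization, $L_v(w)$ has the same image $\bar w$ as $w$, and that image need not vanish. The paper uses only that $W\cap[V,V]$ has finite index in $W$, where $V=V_{rm}(W)$. It takes the normal core $N$ of this subgroup and embeds $W\hookrightarrow N\wr(W/N)$ by Kaloujnine--Krasner. Finally, it applies Zaremsky's embedding of $[V,V]\wr F$ into $[V,V]$ for finite $F$.
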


In the case where the permutational wreath product 
\(\mathbb{Z}^{d} \wr_{X} G\) is a finitely generated group, it follows from the Boone--Higman theorem, which characterizes finitely generated groups with a solvable word
problem as precisely those that embed into a simple subgroup of a 
finitely presented group \cite[Theorem 1]{BooneHigman1974}, that
has solvable word problem, and hence satisfies the Boone--Higman conjecture. Moreover, if \(G\) is a non-torsion contracting self-similar group satisfying the assumptions of Theorems \ref{thmx:ZwrG-SelfSimilar} and \ref{thm:BGcontratil}, it follows from Proposition 5.2 of \cite{DSS}, which establishes the corresponding result for the normal wreath product, that the permutational wreath product \(A\wr_{X}G\) embeds into a finitely presented simple group, and hence satisfies the Boone--Higman conjecture, for every infinite finitely generated abelian group \(A\).

We point out that the group \(\mathbb{Z}\wr_{X}\PSL(\mathbb{Z}[1/2])\),
from Theorem \ref{thmx:PSL-FinitelyPresentedTransitiveSelfSimilar},
does not satisfy our hypothesis from Theorem \ref{thmx:SRN-FinitePresentation}, nor from Zaremsky's work and Belk and Matucci results. That said, we ask

\begin{question}
    Does there exist an embed of \(\mathbb{Z}\wr_{X}\PSL(\mathbb{Z}[1/2])\) into a finitely presented self-similar group?
\end{question}

\noindent and 

\begin{question}
    What other permutational wreath products \(A\wr_{X}G\) embed into a
    self-similar group \(\mathcal{G}\) such that the commutator
    group \(\bigl[V_{m}(\mathcal{G}),V_{m}(\mathcal{G})\bigr]\) is finitely presented?
\end{question}

The paper is organized as follows. In Section \ref{Section:Preliminaries}, we recall
some preliminaries on self-similar groups, virtual endomorphisms, permutational wreath products and SRN-groups. In Section \ref{Section:ProvaThmA}, we
prove Theorem \ref{thmx:ZwrG-SelfSimilar}. In Section \ref{Section:SelfSimilar-PSL},
we discuss the self-similarity of some projective special linear groups,
by proving Theorems \ref{thmx:PSL-FinitelyPresentedTransitiveSelfSimilar} and \ref{thm:BGcontratil},
and applications of Theorem \ref{thmx:ZwrG-SelfSimilar}. Finally, in Section \ref{Section:ProvaUltimosTeoremas}, we
prove Theorem \ref{thmx:SRN-FinitePresentation} and provide some examples.

\section{Preliminaries}\label{Section:Preliminaries}

\subsection{Self-similar groups}\label{Subsec:Self-similarity}
For an integer \(m\geq2\), let \(Y=\{1,\dots,m\}\) be a finite set, which we call
\emph{alphabet}. The set of all finite words over \(Y\), including the empty-word
\(\emptyset\), is denoted by \(Y^*\) and it has the structure of a rooted \(m\)-ary tree,
denoted by \(\T_m\), on which the empty word is the root and the edges are the pairs
\((u,uy)\), where \(u\in Y^*\) and \(y\in Y\). In this case, \(u\) is a \emph{prefix} of
\(uy\), and \(uy\) is a \emph{descendant} of \(u\). The length of a word \(v=y_{1}\dots y_{k}\) is the number of its letters and we denote it by \(|v|\). The level \(k\) is
the set of all words of length \(k\), denoted by \(Y^{k}\). Two vertices are
\emph{incomparable} if neither is a descendant of the other. The boundary of \(\T_m\) is
the Cantor space \(\C_m=Y^\omega\). For each \(v\in Y^*\), the set \(C_v\coloneqq\{v\eta\mid\eta\in\C_m\}\) is the \emph{cone} at \(v\) on
\(\C_{m}\).

The automorphism group of \(\T_m\), denoted by \(\Am\), is isomorphic to the
permutational restricted wreath product recursively defined as
\(\Am=\Am \wr_{Y}\Symm(Y)\), where \(\Symm(Y)\) represents the
symmetric group of degree \(m\). An automorphism \(\alpha\in\Am\) has the form
\(\alpha = (\alpha_1, \dots, \alpha_m) \sigma(\alpha)\), where \(\alpha_i\in\Am\) are \emph{states} of \(\alpha\) and \(\sigma\colon\Am\to\Symm(Y)\) is the permutational representation of
\(\Am\), the first level of the tree \(\T_m\). This form is called the \emph{decomposition
of \(g\) at the first level}.
For \(k \ge 1\), the action of \(\alpha\) on a word \(y_1 y_2 \cdots y_k \in Y^k\) is
given as follows
\[\alpha\colon y_1 y_2 \cdots y_k \mapsto (y_1)^{\sigma(\alpha)} (y_2 \cdots y_k)^{\alpha_{y_1}}.\]
Given \(\alpha\in \Am\), the set of automorphisms
\[Q(\alpha)=\{\alpha\}\cup Q(\alpha_{1})\cup \dots \cup Q(\alpha_{m})\]
is called the \emph{set of states of \(\alpha\)}. If \(Q(\alpha)\) is finite, we say that \(\alpha\) is \emph{finite-state}. If every
element from \(G\leq\Am\) is finite-state, we say that
\(G\) is finite-state.
A subgroup \(G\leq\Am\) is \emph{self-similar (state-closed)} if the set
\(Q(\alpha)\subseteq G\) for each element \(\alpha\in G\).

\subsection{Virtual endomorphisms and self-similarity of groups}\label{Subsec:Virtual-Endomorphisms} A \emph{virtual endomorphism} of a group \(G\) is a homomorphism
\(f\colon H\to G\) from a finite-index subgroup \(H\leq G\). Given 
\(H_i\) in \(G\), let
us consider the virtual endomorphisms
\(f_i\colon H_i\to G\), where \(1\leq i\leq s\). Defining \(m_i\coloneqq [G:H_i]\), we
fix a right transversal \(T_i=\{t_{i1},\dots,t_{im_i}\}\) for each \(H_i\leq G\).
Let \(\theta_i\colon G \times T_i \to H_i\), defined by
\[\theta_i(g,t_{ij})=t_{ij}g(t_{ik})^{-1},\]
where \(H_it_{ik}=H_it_{ij}g\), be the \(i\)-Schreier function and let \(\sigma\) be the induced permutation representation of \(G\) on \(T=\bigcup_{i=1}^{s}T_{i}\), which means
that \(j^\sigma=k\) if and only if \(H_it_{ik}=H_it_{ij}g\) for some \(i=1,\cdots,s\). The maps $f_i$ induce a homomorphism
\[\varphi\colon G \to \Am, g^{\varphi}=(\theta_i(g,t)^{f_i \varphi}\mid t\in T_i, 1\leq i \leq s)~g^\sigma.\]
Essentially, this is the statement of the Kaloujnine--Krasner Theorem \cite{KK}.
We say that a normal subgroup \(K\trianglelefteq G\) is \emph{$\mathbf{f}$-invariant}
if \(K\leq \bigcap_{i=1}^s H_i\) and
\[K^{f_i}\leq K,~\forall~i=1,\dots,s,\]
where $\mathbf{f}=(f_1,\dots,f_s)$. The subgroup generated by all $\mathbf{f}$-invariant normal subgroups is the \textit{$\mathbf{f}$-core}. If the $\mathbf{f}$-core is trivial, then the action \(\varphi\) is \textit{faithful}. A group \(G\) is transitive self-similar if it admits a faithful state-closed representation on a regular rooted tree such that the induced action of \(G\) on the first level is transitive. Similarly, \(G\) is intransitive self-similar if it admits a faithful state-closed representation on a regular rooted tree whose action on the first level is not transitive. Note, however, that the intransitive self-similarity of \(G\) does not preclude the existence of a transitive self-similar 
representation. The following result gives us a characterizations of self-similar groups via \(\mathbf{f}\)-core.
\begin{proposition}{\cite[Proposition A]{DSS}} A group $G$ is intransitive self-similar if and only if there exist virtual endomorphisms $f_i\colon H_i\to G$ for $1\leq i\leq s$ such that $\mathbf{f}$-core is trivial.
\end{proposition}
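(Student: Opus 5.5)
The plan is to prove both directions through the Kaloujnine--Krasner map \(\varphi\) of Subsection~\ref{Subsec:Virtual-Endomorphisms}. The key fact, proved once and used in both directions, is that \(\ker\varphi\) equals the \(\mathbf{f}\)-core, and that \(\ker\varphi\) is itself the largest \(\mathbf{f}\)-invariant normal subgroup.

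\emph{Step 1: \(\ker\varphi\) is the \(\mathbf{f}\)-core.}

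First, \(\ker\varphi\) is \(\mathbf{f}\)-invariant. It is normal. An element \(g\in\ker\varphi\) acts trivially on the first level, so \(g^\sigma=1\), which gives \(g\in\bigcap_i H_i\). All states of \(g\) are trivial, so \(\theta_i(g,t)^{f_i\varphi}=1\) for all \(t\). Taking \(t\in T_i\cap H_i\), we have \(\theta_i(g,t)=tgt^{-1}\). Normality of the kernel lets us reduce to \(t=1\), and then \(g^{f_i}\in\ker\varphi\).

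Conversely, let \(K\) be a normal \(\mathbf{f}\)-invariant subgroup. We show by induction on \(n\) that every \(k\in K\) acts trivially on level \(n\).
\begin{itemize}
\item Since \(K\le\bigcap_i H_i\) and \(K\) is normal, \(t_{ij}k\in H_it_{ij}\). Hence \(k^\sigma=1\) and \(\theta_i(k,t_{ij})=t_{ij}kt_{ij}^{-1}\in K\).
\item Therefore each state \(\theta_i(k,t)^{f_i}\) lies in \(K^{f_i}\le K\).
\item By the induction hypothesis these states act trivially on level \(n\), so \(k\) acts trivially on level \(n+1\).
\end{itemize}
So \(K\le\ker\varphi\). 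Consequently the \(\mathbf{f}\)-core equals \(\ker\varphi\), and it is trivial if and only if \(\varphi\) is faithful. The image \(G^\varphi\) is state-closed, since every state of \(g^\varphi\) is \(h^{f_i\varphi}\) for some \(h\in H_i\).

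\emph{Step 2 (\(\Leftarrow\)).}

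Assume virtual endomorphisms \(f_1,\dots,f_s\) have trivial \(\mathbf{f}\)-core. By Step 1, \(\varphi\) is a faithful state-closed representation. The first-level action \(\sigma\) has at least the \(s\) orbits \(T_1,\dots,T_s\), because \(H_it\,g\) stays among the cosets of \(H_i\).

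If \(s=1\), append the virtual endomorphism \(f_2=\mathrm{id}\colon G\to G\), with \(H_2=G\) and \(T_2=\{1\}\). Every normal subgroup is \(\mathrm{id}\)-invariant, so the \(\mathbf{f}\)-core does not change. Now the first level has at least two orbits, so the representation is intransitive. (If one insists on \(|T|\ge 2\), the degree is still at least \(2\), since \(m_1+1\ge 2\).)

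\emph{Step 3 (\(\Rightarrow\)).}

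Assume \(G\le\Am\) is faithful, state-closed, and has first-level orbits \(Y_1,\dots,Y_s\) with \(s\ge2\). For each orbit:
\begin{itemize}
\item choose \(y_i\in Y_i\) and set \(H_i=\mathrm{Stab}_G(y_i)\), which has index \(|Y_i|\);
\item define \(f_i\colon H_i\to G\) by \(h\mapsto h_{y_i}\), the state at \(y_i\); it lies in \(G\) by state-closedness, and it is a homomorphism because \(h\) fixes \(y_i\);
\item choose transversal elements \(t_{ij}\) sending \(y_i\) to the points of \(Y_i\).
\end{itemize}
One then checks by induction on levels that the tree automorphism relabelling \(y_i^{t_{ij}}\leftrightarrow t_{ij}\) conjugates \(\varphi\) to the given representation. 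Therefore \(\varphi\) is faithful, and by Step 1 the \(\mathbf{f}\)-core is trivial.

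The main obstacle is this last identification. It needs the cocycle computation showing that the state of \(g\) at \(y_i^{t_{ij}}\) equals \(\bigl(t_{ij}g\,t_{ik}^{-1}\bigr)_{y_i}\) up to the conjugating states of the transversal elements. The plan is to absorb those conjugating states into a recursively defined tree automorphism, which is the standard argument behind the Kaloujnine--Krasner correspondence.
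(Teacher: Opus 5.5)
Your proof is correct. The paper gives no proof of this proposition; it quotes it from \cite[Proposition A]{DSS}, so there is no in-paper argument to compare against. What you wrote is the standard Nekrashevych--Sidki argument carried out orbit by orbit: the $\mathbf{f}$-core equals $\ker\varphi$, and conversely one takes $H_i=\mathrm{Stab}_G(y_i)$ and $f_i\colon h\mapsto h_{y_i}$. Your padding with $f_{s+1}=\mathrm{id}_G$ when $s=1$ is genuinely needed for the statement as phrased here, and it is harmless because it does not change the family of $\mathbf{f}$-invariant normal subgroups.

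The step you flag as the main obstacle does close. Put $c_{ij}=(t_{ij})_{y_i}$ and write $t_{ij}g=\theta_i(g,t_{ij})\,t_{ik}$. Taking states at $y_i$ gives $g_{y_i^{t_{ij}}}=c_{ij}^{-1}\,\theta_i(g,t_{ij})^{f_i}\,c_{ik}$. Then $\Psi(t_{ij}w)=y_i^{t_{ij}}\,\Psi(w)^{c_{ij}}$ defines a level-preserving bijection $T^*\to Y^*$ with $\Psi(w^{g^\varphi})=\Psi(w)^{g}$, by induction on $|w|$.

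For the direction you need, you can also skip $\varphi$ and the conjugator entirely. Let $K$ be a normal $\mathbf{f}$-invariant subgroup. Then $K$ fixes every $y_i$, so by normality and transitivity of $G$ on $Y_i$ it fixes the whole first level. For $y=y_i^{g}$ and $k\in K$ one computes $k_y=g_{y_i}^{-1}\,(gkg^{-1})^{f_i}\,g_{y_i}$, which lies in $K$ by state-closedness and normality. Induction on levels shows that $K$ acts trivially, so $K=1$ by faithfulness of the given action. This is shorter, and it makes the ($\Rightarrow$) direction independent of the Kaloujnine--Krasner identification.
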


If $s=1$, under the conditions above, we say \textit{$f$-invariant} instead of $\mathbf{f}$-invariant, and \textit{$f$-core} instead of $\mathbf{f}$-core. In this case,
a characterization of self-similar groups is given by the next result.

\begin{proposition}{\cite[Theorem 3.1]{NS}}
A group $G$ is transitive self-similar if and only if there is a virtual endomorphism $f\colon H \to G$ such that the $f$-core is trivial.
\end{proposition}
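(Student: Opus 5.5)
The plan is to prove the two implications separately. The forward direction comes from the stabilizer of a first-level vertex together with the "state at that vertex" map. The converse is the Kaloujnine--Krasner map \(\varphi\) of Subsection~\ref{Subsec:Virtual-Endomorphisms} with \(s=1\).

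\emph{(\(\Rightarrow\))} Suppose \(G\leq\Am\) is a faithful, state-closed subgroup acting transitively on \(Y\).
\begin{enumerate}
\item Set \(H=\mathrm{Stab}_G(1)\) and define \(f\colon H\to G\) by \(h\mapsto h_1\), the state of \(h\) at the letter \(1\). This map lands in \(G\) because \(G\) is state-closed.
\item \(f\) is a homomorphism: for \(g,h\in H\) we have \((gh)_1=g_1h_{1^{\sigma(g)}}=g_1h_1\).
\item \([G:H]=m\) by transitivity, so \(f\) is a virtual endomorphism.
\item To show the \(f\)-core is trivial, let \(K\trianglelefteq G\) with \(K\leq H\) and \(K^f\leq K\). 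I prove by induction on \(n\) that every such \(K\) fixes level \(Y^n\) pointwise; faithfulness then gives \(K=1\).
\item Level \(1\): for \(y\in Y\), choose \(t\in G\) with \(1^{\sigma(t)}=y\). Since \(K\) is normal, \(tkt^{-1}\in K\leq H\), so \(k\) fixes \(y\).
\item Inductive step: the state of \(tkt^{-1}\) at \(1\) has the form \(t_1\,k_y\,(t_1)^{-1}\), up to the exact convention for the action. This state is \((tkt^{-1})^f\in K\). Hence \(k_y\) is a \(G\)-conjugate of an element of \(K\).
\item By the induction hypothesis, \(K\) fixes \(Y^{n-1}\). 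Level stabilizers are normal in \(\Am\), so every \(G\)-conjugate of an element of \(K\) also fixes \(Y^{n-1}\). Thus each \(k_y\) fixes \(Y^{n-1}\), and \(k\) fixes \(Y^n\).
\end{enumerate}

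\emph{(\(\Leftarrow\))} Given \(f\colon H\to G\) with trivial \(f\)-core, take \(\varphi\colon G\to\mathcal{A}_m\), with \(m=[G:H]\), from the Kaloujnine--Krasner construction.
\begin{enumerate}
\item The image is state-closed, since the states of \(g^\varphi\) are \(\theta(g,t)^{f\varphi}\in G^\varphi\).
\item The action on the first level is the action of \(G\) on the cosets \(H\backslash G\), which is transitive.
\item It remains to show that \(\ker\varphi\) is exactly the \(f\)-core.
\begin{itemize}
\item \(\ker\varphi\) is normal. It acts trivially on the cosets, so \(\ker\varphi\leq H\).
\item For \(k\in\ker\varphi\), the state at the transversal element \(t_1\in H\) is \(\theta(k,t_1)^{f\varphi}=(t_1kt_1^{-1})^{f\varphi}\), and this must be trivial. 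Since \(t_1kt_1^{-1}\) ranges over \(\ker\varphi\), we get \((\ker\varphi)^f\leq\ker\varphi\). So \(\ker\varphi\) is \(f\)-invariant and lies in the core.
\item Conversely, an \(f\)-invariant normal \(K\) lies in \(\ker\varphi\) by the same level-by-level induction as in the forward direction. Here the states \(\theta(k,t)^{f}=(tkt^{-1})^f\) lie in \(K\).
\end{itemize}
\item Trivial core therefore gives a faithful representation.
\end{enumerate}

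The main obstacle is the inductive step showing that an \(f\)-invariant normal subgroup acts trivially on every level. Two things need care there. First, \(K^f\) need not be normal, so one must pass through conjugation by states of \(G\)-elements and use normality of level stabilizers. Second, the left/right action conventions must be kept consistent so that the state of \(tkt^{-1}\) at \(1\) is correctly identified with a conjugate of \(k_y\).
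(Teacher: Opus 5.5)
Your proposal is correct and is essentially the standard Nekrashevych--Sidki argument that the paper cites without proof: for ($\Rightarrow$) you use the vertex stabilizer with the state-at-$1$ map, and for ($\Leftarrow$) the Kaloujnine--Krasner map $\varphi$ from the preliminaries, whose kernel is an $f$-invariant normal subgroup of $H$. One simplification: in your inductive step, $t_1k_yt_1^{-1}=(tkt^{-1})^f\in K$ with $t_1\in G$ already gives $k_y\in K$ because $K$ is normal in $G$, so you do not need normality of level stabilizers in $\mathcal{A}_m$, and the concern that $K^f$ need not be normal does not arise.
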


A group $G\leq \Am$ is \emph{contracting self-similar} if there exists a finite set $S_G \subset G$ such that for every $g \in G$ there exists $N(g)\in \mathbb{N}$ such that $g|_v \in S_G$ for all word $v \in Y^{*}$ with $|v| \geq N(g)$. Equivalently, \(G\) is contracting self-similar with respect to the
virtual endomorphisms $f_i$ and the transversal $T_i$, where $1 \leq i \leq s$, if there
exists a finite subset $S_G \subseteq G$ such that, for every $g \in G$, there exists an integer $N(g)$ satisfying
\[S_n(g)\subseteq S_G,~\forall~n\ge N(g).\]
where $S_0(g)=\{g\}$, and for $n \geq 0$, $S_{n+1}(g) = \left\{ \theta_i(x,t)^{f_i}\mid x\in S_n(g),~t \in T_i,~1\le i\le s \right\}$. The minimal set $S_G$ satisfying this condition is called the \emph{nucleus} and it is denoted by $\mathcal{N}(G)$. For more information on contracting self-similar groups, we refer the reader to \cite[Section 2.11]{Nekra-SelfSimilar}.

Now, if $L$ is a subset of $G$, let us consider the subset $L^{f_{i}^{-1}}=\{h \in H_{i} \mid h^{f_i} \in L\}$ of $H_{i}$. We say that
the subgroup $H_{\omega} = \bigcap_{k=0}^{\infty} W_{k}$, where
\[W_{0} = G, W_{1} = \bigcap_{i=1}^{s} W_{0}^{f_{i}^{-1}}, W_{2} = \bigcap_{i=1}^{s} W_{1}^{f_{i}^{-1}}, \dots, W_{k} = \bigcap_{i=1}^{s} W_{k-1}^{f_{i}^{-1}},\]
is the \emph{parabolic subgroup} of $G$. It is immediate that
\[H_{\omega} = \langle K \leq \bigcap_{i=1}^{s} H_{i} \mid K^{f_{i}} \leq K,~\forall~i = 1, \dots, s \rangle.\]
Note that the $\mathbf{f}\text{-core}$ is a subgroup of $H_{\omega}$. We say that the group $G$ is \emph{strongly self-similar} if $H_{\omega}$ is trivial.

\subsection{Permutational restricted wreath products}\label{FA.de.ZePSL}
Let $W$ and $G$ be groups and let $X$ be a $G$-set. Let us consider the restricted direct
product $W^{(X)} = \bigoplus_{x \in X}W$, the group of functions $f\colon X \rightarrow W$ with finite support, equipped with the point-wise multiplication. There is a natural
action of the group $G$ on $W^{(X)}$ which permutes the indices as follows: for $g \in G$ and $f \in W^{(X)}$ define $(x)(f\cdot g)=((x)g^{-1})f$, for all $x \in X.$
The permutational wreath product of $W$ by $G$, denoted by
\[W \wr_XG,\]
is defined as the semidirect product $W^{(X)}\rtimes G$, with respect to this action. Let $y$ be an element of the permutational wreath product $W \wr_X G $. Then $y$ can be written in the form $b \cdot g$, where $b \in W^{(X)}$ and $g \in G$. If we fix some indexing $\{x_1,..., x_d\}$ of the set $X$, then $b$ can be written as $w_1^{x_1}...w_l^{x_l}$ and
\[
y = (w_1^{x_1} \dots w_l^{x_l}) g,
\]
where each $w_i \in W $ and each $x_i \in X$ indicates the location (or index) of $w_i$. In the particular case where $X = K\setminus G $, the set of right cosets of a subgroup $K \leq G $, we write
\[
x = (w_1^{K g_1} \dots w_l^{K g_l}) g.
\]
If, furthermore, $W = \mathbb{Z}$ and $a$ is a generator of $\mathbb{Z}$, then we write
\[
x = (a^{n_1 K g_1} \dots a^{n_l K g_l}) g = ( a^{\, n_1 K g_1 + \dots + n_l K g_l} ) g.
\]

\subsection{Scott--R\"over--Nekrashevych groups}\label{Subsec:SRN-groups}
Let \(\C_{m}=Y^{\omega}\)
denote the Cantor space. It is the boundary at infinity of \(\T_m\). For each
\(v\in Y^{*}\), the set \(C_{v}\) of all sequences which have $v$ as prefix is called \emph{cone on \(v\)} in \(\C_{m}\), that is,
\[C_{v}\coloneqq\{v\eta\mid \eta\in\C_{m}\}.\]
Every open set of \(\C_{m}\) is a disjoint union of cones and each
clopen set of \(\C_{m}\) is a finite disjoint union of cones \cite{BelkMatucci2025}. A homeomorphism
\(f\in \Homeo(\C_{m})\) is an element of \(V_{m}(G)\) if and only if there exist finite partitions
\(C_{v_1},\ldots,C_{v_n}\) and \(C_{u_1},\ldots,C_{u_n}\) of \(\C_{m}\) into disjoint cones and elements
\(\alpha_1,\ldots,\alpha_n\in G\) such that
\[(v_{i}w)f = u_i\,(w)^{\alpha_i},\]
for each $1\leq i\leq n$ and all $w\in Y^\omega$. Such element can be represented as the following table
\[f=\begin{pmatrix}
 v_{1}\quad v_{2}\quad\ldots\quad v_{n} \\ \alpha_{1}\quad \alpha_{2}\quad\ldots\quad\alpha_{n} \\ u_{1}\quad u_{2}\quad \ldots\quad u_{n}
\end{pmatrix},\]
where \(\alpha_{1},\ldots,\alpha_{n}\in G\). For an explicit definition, we refer the reader to either \cite[Definition~2.2]{LlosaIsenrichScheslerWu2025} or
\cite[Definition~2.2]{Zaremsky2025b}.

The first construction of these kind of group due to Elizabeth A. Scott in \cite{Scott1984}, where she presented
a procedure which allows constructing finitely presented infinite simple groups and extends the work of
Higman \cite{Higman1974}. Scott incidentally proved that if
\(G\leq\Am\) is a finitely presented self-similar group, then \(V_{m}(G)\) is finitely presented \cite[Theorem 2]{Scott1984}.
Later, Claas E. R\"over extended her method to develop finitely presented simple groups on which Grigorchuk
groups are embedded (see \cite{Rover1999}). Volodymyr V. Nekrashevych defined an analogue
of the Higman-Thompson groups for the Cuntz-Pimsner algebra of a self-similar action, extending
R\"over's work (see \cite{Nek2004}). Later, Nekrashevych proved that
if \(G\leq \Am\) is a self-similar group,
the commutator subgroup \([V_{m}(G),V_{m}(G)]\) is simple \cite[Theorem 4.7]{Nek2017} and, if \(G\) is contracting self-similar,
\(V_{m}(G)\) is finitely presented \cite[Theorem 5.9]{Nek2017}. In this direction of finiteness properties, Rachel Skipper, Stefan Witzel and Zaremsky proved that the group \(V_m(G)\) is of type \(\mathrm{F}_n\) whenever \(G\) is of type \(\mathrm{F}_n\) \cite[Theorem 4.15]{SkipperWitzelZaremsky2019} and Claudio Llosa Isenrich, Eduard Schesler and Xiaolei Wu demonstrated that
\(V_{m}(G)\) is of type \(\mathrm{FP}_{n}(R)\), for some unital commutative ring \(R\), if \(G\) is of type \(\mathrm{FP}_n(R)\) \cite[Proposition 3.1]{LlosaIsenrichScheslerWu2025}.

\section{Proof of Theorem \ref{thmx:ZwrG-SelfSimilar}}\label{Section:ProvaThmA}
In this section, we establish results concerning the self-similarity of permutational wreath products.
\subsection{Proof of Theorem A}

First, we show that $\mathbb{Z}\wr_{K\backslash G}G$ is self-similar. To this end, we begin by establishing the following auxiliary lemma.

\begin{lemma}\label{função_lambda}
Let $G$ be a group with a self-similar representation induced by the virtual endomorphisms $f_i\colon H_i\to G$ for $1 \leq i \leq s$. Let $K\leq G$ be an infinite-index subgroup satisfying condition $(\Omega)$.
For each $1\leq i \leq s$, define $L_i=K\cap H_i$ and consider the map
\[
\lambda_i\colon L_i\backslash H_i\longrightarrow K\backslash G,
\qquad
L_i h\longmapsto K h^{f_i}.
\]
Then $\lambda_i$ is well defined and injective.
\end{lemma}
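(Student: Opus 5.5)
Both properties follow directly from condition $(\Omega)$: well-definedness uses item (ii), injectivity uses item (iii). Since $f_i$ is a homomorphism on $H_i$, both reduce to one computation with the element $h_1h_2^{-1}$. Note that the cosets are right cosets, so $L_ih_1=L_ih_2$ means $h_1h_2^{-1}\in L_i$, and $Kx=Ky$ means $xy^{-1}\in K$.

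\emph{Well-definedness.} Take $h_1,h_2\in H_i$ with $L_ih_1=L_ih_2$. Then $h_1h_2^{-1}\in L_i=K\cap H_i$. By condition $(\Omega)$(ii), $(K\cap H_i)^{f_i}\le K$, so
\[
h_1^{f_i}\bigl(h_2^{f_i}\bigr)^{-1}=(h_1h_2^{-1})^{f_i}\in K .
\]
Hence $Kh_1^{f_i}=Kh_2^{f_i}$, and $\lambda_i(L_ih)$ does not depend on the representative $h$. This uses only that $H_i$ is a subgroup and $f_i$ a homomorphism, so $h_1h_2^{-1}\in H_i$ lies in the domain of $f_i$.

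\emph{Injectivity.} Suppose $Kh_1^{f_i}=Kh_2^{f_i}$ with $h_1,h_2\in H_i$. Then $(h_1h_2^{-1})^{f_i}=h_1^{f_i}(h_2^{f_i})^{-1}\in K$. Applying condition $(\Omega)$(iii) to $h=h_1h_2^{-1}\in H_i$ gives $h_1h_2^{-1}\in K$. Together with $h_1h_2^{-1}\in H_i$, this puts it in $L_i$, so $L_ih_1=L_ih_2$.

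There is no substantial obstacle. The only subtle point is reading (iii) correctly: it refers to elements $h$ in the domain $H_i$ of $f_i$, which holds automatically for $h_1h_2^{-1}$. The hypotheses of infinite index and (i) are not needed for this lemma; they are presumably used later in the construction.
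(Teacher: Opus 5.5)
Your proposal is correct and follows the paper's proof essentially verbatim. Well-definedness comes from applying condition $(\Omega)$(ii) to $h_1h_2^{-1}\in K\cap H_i$, and injectivity comes from applying $(\Omega)$(iii) to the same element $h_1h_2^{-1}\in H_i$.
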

\begin{proof}
We first show that \(\lambda_i\) is well defined. Indeed, suppose that $L_i h_1=L_i h_2$.
Then $h_1h_2^{-1}\in L_i=K\cap H_i$. By hypothesis (ii), $(h_1h_2^{-1})^{f_i}\in K$, that is, $Kh_1^{f_i}=Kh_2^{f_i}$. Hence, $(L_i h_1)^{\lambda_i}=(L_i h_2)^{\lambda_i}$, as required.

\medskip We now prove that \(\lambda_i\) is injective. Suppose that $(L_i h_1)^{\lambda_i} = (L_i h_2)^{\lambda_i}$. Then $Kh_1^{f_i}=Kh_2^{f_i}$, which implies that $(h_1h_2^{-1})^{f_i}\in K$. By hypothesis (iii), it follows that $h_1h_2^{-1}\in K$.
Since \(h_1, h_2\in H_i\), we obtain $h_1h_2^{-1}\in K\cap H_i=L_i$, and therefore $L_i h_1=L_ih_2$. Consequently, \(\lambda_i\) is injective for every \(i=1,\ldots,s\).

\end{proof}

Consider $\mathcal{G} = \mathbb{Z} \wr_{K \setminus G} G = \langle a \rangle \wr_{K \setminus G} G = \langle a^{K} \rangle ^{G} \rtimes G$. For each $i \in \{1, ..., s\}$, define the subgroup
$$\mathcal{H}_i =\langle a^{K}\rangle^{G} \rtimes H_i,$$
and let
$$\mathcal{H} = \mathcal{G}.$$
Note that the index $[\mathcal{G}:\mathcal{H}_i] = m_i$. For each $i \in \{1, 2, \dots, s\}$ define the homomorphism $\rho_{i}: \mathcal{H}_i \rightarrow \mathcal{G}$ that extends the map

\begin{align*}
 \rho_{i}\colon a^{Kg}&\mapsto
\begin{cases}
a^{Kh^{f_i}},~\text{if}~Kg=Kh~\text{for some}~h\in H_i \\
1, \textrm{ otherwise}
 \end{cases}\\
 h &\mapsto h^{f_i},~h\in H_i.
\end{align*}
We also define the homomorphism $\rho: \mathcal{H} \rightarrow \mathcal{G}$ as the extension of
\begin{align*}
 \rho\colon a^{Kg}&\mapsto x,\\
 g&\mapsto e,
\end{align*}
where \(g,x\in G\) and \(o(x)=\infty\).
The proof that the maps $\rho_i$, for $1 \leq i \leq s$, and $\rho$ are well defined and are homomorphisms is deferred to the following lemma.
\begin{lemma}
 The maps $\rho_i$, for $1 \leq i \leq s$, and $\rho$ are well defined and are homomorphisms.
\end{lemma}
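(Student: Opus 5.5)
We prove both claims by the universal property of a semidirect product: a map defined on the base group $\langle a^K\rangle^G=\bigoplus_{x\in K\backslash G}\langle a^{x}\rangle\cong\mathbb{Z}^{(K\backslash G)}$ and on the top group extends to a homomorphism of the semidirect product exactly when it is a homomorphism on each factor and respects the conjugation relations. Since the base group is free abelian on the symbols $a^{Kg}$, the assignment on these symbols determines a homomorphism on the base group, provided the images commute pairwise and the assignment depends only on the coset $Kg$.

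For $\rho_i$, we first check that the prescription on generators is well defined. If $Kg=Kh=Kh'$ with $h,h'\in H_i$, then $h h'^{-1}\in K\cap H_i=L_i$, so $L_ih=L_ih'$. Lemma~\ref{função_lambda} then gives $Kh^{f_i}=Kh'^{f_i}$. Thus $a^{Kg}\mapsto a^{(L_ih)^{\lambda_i}}$ is a well-defined map into the abelian base group of $\mathcal{G}$, and it extends to a homomorphism $\bar\rho_i$ on $\langle a^K\rangle^G$. On $H_i$ we take $\rho_i=f_i$, which is a homomorphism.

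It remains to verify compatibility with the action: for $h'\in H_i$ we need $\bar\rho_i\bigl((a^{Kg})^{h'}\bigr)=\bar\rho_i(a^{Kg})^{h'^{f_i}}$, where $(a^{Kg})^{h'}=a^{Kgh'}$. We split into two cases.
\begin{enumerate}
\item If $Kg=Kh$ with $h\in H_i$, then $Kgh'=Khh'$ and $hh'\in H_i$. The left side is $a^{K(hh')^{f_i}}=a^{Kh^{f_i}h'^{f_i}}$, which equals the right side.
\item If $Kg\cap H_i=\emptyset$, then also $Kgh'\cap H_i=(Kg\cap H_i)h'=\emptyset$, since $h'\in H_i$. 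Both sides are then $1$.
\end{enumerate}
So the relations of $\mathcal{H}_i=\langle a^K\rangle^G\rtimes H_i$ are preserved and $\rho_i$ is a homomorphism. The main point to watch is exactly this case analysis, together with the observation that the set of cosets meeting $H_i$ is $H_i$-invariant.

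For $\rho$, we send every $a^{Kg}$ to the fixed element $x$ of infinite order and every $g\in G$ to $e$. Since all the $a^{Kg}$ go to the same element, the images commute and the map on the free abelian base group is a homomorphism (namely $\sum n_j a^{Kg_j}\mapsto x^{\sum n_j}$). The trivial map on $G$ is a homomorphism. Compatibility holds because $\rho(a^{Kgh})=x=\rho(a^{Kg})^{\rho(h)}$, as $\rho(h)=e$. In other words, $\rho$ factors through the augmentation $\mathcal{G}\to\mathbb{Z}$, $a^{Kg}\mapsto 1$, $g\mapsto 0$, followed by $1\mapsto x$. The hypothesis $o(x)=\infty$ is not needed for well-definedness; it only matters later for the faithfulness of the representation.
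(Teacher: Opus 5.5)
Your proof is correct and follows essentially the same route as the paper. Both establish well-definedness of $\rho_i$ on the base generators through condition $(\Omega)$(ii), which you reach via the well-definedness part of the lemma on $\lambda_i$, and both then check $\rho_i\bigl((a^{Kg})^{h'}\bigr)=\rho_i(a^{Kg})^{h'^{f_i}}$ by splitting according to whether $Kg$ meets $H_i$. Beyond the paper, you make explicit that the set of cosets meeting $H_i$ is $H_i$-invariant, and you verify the case of $\rho$ via the augmentation, which the paper simply calls clear.
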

\noindent\textit{Proof of the Lemma.} Clearly, $\rho$ is well defined and is a homomorphism. For the maps $\rho_i$, we begin by proving that they are well defined. To this end, it suffices to verify that
$$a^{Kg} = a^{Kg'}\quad\Longrightarrow\quad \bigl(a^{Kg}\bigr)^{\rho_i} =\bigl(a^{Kg'}\bigr)^{\rho_i} $$
Since $a^{Kg} = a^{Kg'}$, it follows that $Kg = Kg'$.

First, suppose that there exists $h\in H_i$ such that $Kh=Kg=Kg'.$ By the definition of $\rho_i$, we obtain
\[
\left(a^{Kg}\right)^{\rho_i}
=
a^{Kh^{f_i}}
=
\left(a^{Kg'}\right)^{\rho_i}.
\]
Now suppose that there is no element $h\in H_i$ satisfying $Kh=Kg=Kg'$.
In this case,
\[
\left(a^{Kg}\right)^{\rho_i}
=
1
=
\left(a^{Kg'}\right)^{\rho_i},
\]
showing once again that the images coincide.

It remains to verify that the definition is independent of the choice of representative in $H_i$. Let $h,h'\in H_i$ be such that $Kh=Kh'$. Then $hh'^{-1}\in K$, and consequently,
\[
hh'^{-1}\in K\cap H_i.
\]
By condition $(\Omega)$, it follows that $(hh'^{-1})^{f_i}\in K.$
Hence, $Kh^{f_i}=Kh'^{f_i}$,
and therefore,
\[
a^{Kh^{f_i}}
=
a^{Kh'^{f_i}}.
\]
Thus, the maps $\rho_i$ are well defined.

It remains to show that the maps $\rho_i$ are homomorphisms. First, observe that the restriction of $\rho_i$ to the subgroup $\langle a^K \rangle^G$ is given by
\[
\rho_i\big|_{\langle a^K \rangle^G}\colon\langle a^K \rangle^G\longrightarrow \langle a^K \rangle^G
\]
and that the restriction of $\rho_i$ to the subgroup $H_i$ coincides with
\[
f_i=\rho_i\big|_{H_i}\colon H_i\longrightarrow H_i.
\]
Since both restrictions are homomorphisms, it suffices to verify that, for every
$a^{Kg} \in \langle a^K \rangle^G$ and every $h\in H_i$, the identity
\[
\left((a^{Kg})^h\right)^{\rho_i}
=
\left((a^{Kg})^{\rho_i}\right)^{h^\rho_i}.
\]
holds.

On the one hand,
\[
\left((a^{Kg})^h\right)^{\rho_i}
= \left(a^{Kgh}\right)^{\rho_i} =
\begin{cases}
a^{K(h'h)^{f_i}}, & \text{if } Kg\in KH_i,\\[1ex]
1, & \text{if } Kg\notin KH_i.
\end{cases}
\]
where $Kg = Kh'$ for some $h' \in H_i$. On the other hand,
\[
\left((a^{Kg})^{\rho_i}\right)^{h^{\rho_i}}
=
\begin{cases}
\left(a^{K(h')^{f_i}}\right)^{h^{f_i}}= a^{K(h'h)^{f_i}}, & \text{if } Kg\in KH_i,\\[1ex]
1^{h^{f_i}} = 1, & \text{if } Kg\notin KH_i.
\end{cases}
\]

Comparing the two expressions, we obtain the desired equality. Hence, $\rho_i$ preserves the action of $H_i$ on $\langle a^K \rangle^G$, and therefore is a homomorphism. This completes the proof.

\hfill$\square$

Returning to the proof of the theorem, we claim that the self-similar representation of $\mathcal{G}$ induced by the virtual endomorphisms $\rho_i\colon\mathcal{H}_i\longrightarrow\mathcal{G}$ and $\rho\colon\mathcal{H}\longrightarrow\mathcal{G}$ is faithful. Let $N\leq
\mathcal{H}
\cap
\bigcap_{i=1}^{s}\mathcal{H}_i$ be a normal subgroup of \(\mathcal{G}\) such that
\[
N^{\rho}\leq N
\qquad\text{and}\qquad
N^{\rho_i}\leq N,
\quad i=1,\ldots,s.
\]
Since $G$ is state-closed with respect to the maps $f_i$, it follows that $N\leq\langle a^{K}\rangle^{G}$. Assume, by contradiction, that there exists $y\in N\setminus\{1\}$ with minimal support $|y|=r>1$. Write
\[
y=a^{n_1K h_1+n_2K h_2+\dots+n_rK h_r}.
\]
By the minimality of the support and the normality of $N$, we may assume that $h_1 = 1, h_2,...,h_r \in \bigcap_{i=1}^sH_i$. Since $ \lambda_i$ is injective by Lemma \ref{função_lambda} and \(r\) was chosen to be minimal, it follows that $y^{\rho_{i_1}\rho_{i_2}\dots\rho_{i_k}}\neq1$
and
\[
\left|
y^{\rho_{i_1}\rho_{i_2}\dots\rho_{i_k}}
\right|
=
\left|
a^{\,n_1K+n_2Kh_2^{f_{i_1}\dots f_{i_k}}
+\cdots+
n_rKh_r^{f_{i_1}\dots f_{i_k}}}
\right|
=r,
\]
for every $k$. We conclude that $h_2,\ldots,h_r\in KH_{\omega}$. Since, by hypothesis, $H_{\omega}\leq K$,
it follows that $h_2,\ldots,h_r\in K$. Hence,
\[ y =a^{(n_1+n_2+\cdots+n_r)K}.\]
Therefore, $r=1$, contradicting the assumption that $r > 1$. We conclude that $N=\{1\}$, and the induced representation is faithful. Consequently,
\[
\mathbb{Z}\wr_{K\backslash G}G
\]
is self-similar.

\(\hfill \square\)
\subsubsection{Proof of the item (1)} Let $f_i\colon H_i\to G$, for $1 \leq i \leq s$, be the virtual endomorphisms inducing a self-similar representation of $G$. Consider the permutational wreath product
\[
\mathcal{G}=B\wr_{K\backslash G}G
=
\bigoplus_{K\backslash G}B\rtimes G
= \mathcal{B} \rtimes G.\]
For each $i \in \{1,\dots, s\}$, define the subgroup of $\mathcal{G}$ by
$$\mathcal{H}_i=[B^K,G]H_i = \langle [a^{K},\,g ] \mid a \in B, g \in G\rangle H_i.$$
We begin by showing that $\mathcal{H}_i$ has finite index in $\mathcal{G}$. To this end, consider the homomorphism $\Sigma\colon \mathcal{B} \to B$ defined by $(f)\Sigma=\sum_{x\in K\backslash G}(x)f$. Since every element of $\mathcal{B}$ has finite support and $B$ is abelian, the map $\Sigma$ is well defined. Moreover, observe that $\ker(\Sigma)=[B^K,G]$. Since $\Sigma$ is surjective, it follows that $[\mathcal{B}:[B^K,G]] = |B|$. Consequently,
\[
[\mathcal{G}:\mathcal{H}_i]
=
[B^K:[B^{K},G]]\,[G:H_i]
<
\infty,
\]
showing that $\mathcal{H}_i$ has finite index in $\mathcal{G}$.
Next, we define the homomorphisms $\rho_i:\mathcal{H}_i\to \mathcal{G}$ as extensions of the maps given below
\[
[a^K,g]^{\rho_i}=
\begin{cases}
[a^{K},h^{f_i}], & \text{if } Kg = Kh \ \text{for some} \ h\in H_i,\\
a^{-K}, & \text{otherwise},
\end{cases}
\]
and
\[
h^{\rho_i}=h^{f_i},
\qquad h\in H_i.
\]

Analogously to the proof that $\mathbb{Z}\wr_{K\backslash G}G$ is self-similar, we can use the virtual endomorphisms defined above to show that $B\wr_{K\backslash G}G$ is also self-similar.\\

\subsubsection{Proof of the item (2)} Let $G$ be a non-torsion group satisfying the hypotheses of Theorem $\ref{thmx:ZwrG-SelfSimilar}$. Then, by the previous results, the groups
\[
B\wr_{K\backslash G}G
\quad\text{and}\quad
\mathbb{Z}\wr_{K\backslash G}G
\]
are self-similar. By hypothesis, $A$ is an infinite finitely generated abelian group. Therefore,
\[
A\cong\mathbb{Z}^d\oplus T(A),
\]
where $T(A)$ denotes the torsion subgroup of $A$, which is finite. The proof proceeds by induction, using Proposition 5.2 from \cite{DSS}, which establishes the corresponding result for the normal wreath product. Indeed, the argument employed in that case can be adapted to the present setting, considering $X=K\backslash G$ as the indexing set in place of the indexing group.

\begin{rem}
    If \(X\) is a finite $G_2$-set and \(G_{1}\) and \(G_{2}\) are self-similar,
    it is easy to see that \(G_{1}\wr_X G_{2}\) is self-similar.
\end{rem}


\section{Self-similar projective special linear groups and applications}\label{Section:SelfSimilar-PSL}

In this section, we prove the self-similarity of the projective special linear groups $\PSL(\mathbb{Z}[1/p])$, motivated by a result
due to Šuni\'{c}, presented by Kapovich in \cite[Example 16]{K}. In the case $p=2$, we obtain a strongly self-similar representation. We also present some applications of the results established in the previous section.

\subsection{Proof of Theorem \ref{thmx:PSL-FinitelyPresentedTransitiveSelfSimilar} - Item (1)} In this item, we
consider the group $\PSL(\mathbb{Z}[1/p])$ for $p\geq 3$. We prove that this group is self-similar, with a self-similar representation induced by the virtual endomorphism
\begin{align*}
 f\colon H \coloneqq\left\langle
\begin{pmatrix}
a & 2b\\
c & d
\end{pmatrix}
\right\rangle&\longrightarrow \PSL(\mathbb{Z}[1/p]) \\
 \begin{pmatrix}
a & 2b\\
c & d
\end{pmatrix}&\longmapsto \begin{pmatrix}1/2 & 0 \\ 0 & 1\end{pmatrix}\begin{pmatrix}
a & 2b\\
c & d
\end{pmatrix}\begin{pmatrix} 2 & 0 \\ 0 & 1 \end{pmatrix}=\begin{pmatrix}
a & b\\
2c & d
\end{pmatrix}
\end{align*}
where $H$ is a subgroup of index $3$.   

First observe that every $f$-invariant subgroup of $H$ must be contained in the subgroup of lower triangular matrices. Indeed, under each iteration of $f$, the $(1,2)$-entry is divided by $2$ while remaining divisible by $2$. Hence, after arbitrarily many iterations, this entry must be zero.

On the other hand, the subgroup of lower triangular matrices is not normal. Indeed, it suffices to conjugate it by the matrix
$$\begin{pmatrix} 0 & -1 \\ 1 & 0\end{pmatrix}.$$
Therefore, every normal $f$-invariant subgroup of $H$ is contained in the subgroup of diagonal matrices. However, no subgroup of the group of diagonal matrices is normal. Indeed, this can be seen by conjugating it by the matrix
$$\begin{pmatrix} 1 & -1 \\ 1 & 0\end{pmatrix}.$$
It follows that the $f$-core of $H$ is trivial.

$\hfill\square$

\begin{rem}
For $p=2$, the virtual endomorphism
\begin{align*}
f\colon H\coloneqq \left\langle
\begin{pmatrix}
a & 3b\\
c & d
\end{pmatrix}
\right\rangle
&\longrightarrow \PSL(\mathbb{Z}[1/2])\\
\begin{pmatrix}
a & 3b\\
c & d
\end{pmatrix}&\longmapsto
\begin{pmatrix}
1/3 & 0\\
0 & 1
\end{pmatrix}
\begin{pmatrix}
a & 3b\\
c & d
\end{pmatrix}
\begin{pmatrix}
3 & 0\\
0 & 1
\end{pmatrix}=\begin{pmatrix}
a & b\\
3c & d
\end{pmatrix}
\end{align*}
also induces a self-similar representation of $\PSL(\mathbb{Z}[1/2])$ with degree $4$. However, the parabolic subgroup is non-trivial. 
\end{rem}

\subsection{Proof of Theorem \ref{thmx:PSL-FinitelyPresentedTransitiveSelfSimilar} - Item (2)} Here, we prove that the group $\PSL(\mathbb{Z}[1/2])$ is strongly self-similar, with a self-similar representation induced by the virtual endomorphism
\begin{align*}
 f\colon H\coloneqq \left\langle
\begin{pmatrix}
a & 3b\\
c & d
\end{pmatrix}
\right\rangle&\longrightarrow \PSL(\mathbb{Z}[1/2]) \\ 
 \begin{pmatrix}
a & 3b\\
c & d
\end{pmatrix}&\longmapsto \begin{pmatrix}1/3 & 0 \\ 1/3 & -1\end{pmatrix}\begin{pmatrix}
a & 3b\\
c & d
\end{pmatrix}\begin{pmatrix} 0 & 3 \\ -1 & 1 \end{pmatrix}=\begin{pmatrix}
-b+d & a+b-3c-d\\
-b & a+b
\end{pmatrix}
\end{align*}
where $H$ is a subgroup of index $4$.   
The proof of this result is a consequence of the following lemmas.

\begin{lemma}\label{6}
 Let $(w_n)$ be a non-zero sequence in $\mathbb{Z}[1/2]$ satisfying the recurrence $w_{n+1} = -3w_{n-1}+w_n$. Then there exist $k, M \in \mathbb{N}$ such that $v_3(w_n) \leq M$ for all $n \geq k$.
\end{lemma}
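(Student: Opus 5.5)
The plan is to diagonalize the recurrence $3$-adically. The characteristic polynomial of $w_{n+1}=w_n-3w_{n-1}$ is $x^2-x+3$, with discriminant $-11$. Since $-11\equiv 1 \pmod 3$ is a nonzero square modulo $3$, Hensel's lemma gives $\sqrt{-11}\in\mathbb{Z}_3$. Hence $x^2-x+3$ splits over $\mathbb{Z}_3$ with two distinct roots $\alpha,\beta$.

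These roots satisfy $\alpha+\beta=1$ and $\alpha\beta=3$. Reducing modulo $3$, the polynomial becomes $x(x-1)$. So exactly one root, say $\alpha$, is a $3$-adic unit with $\alpha\equiv 1\pmod 3$. The other root $\beta$ then has $v_3(\beta)=v_3(3)-v_3(\alpha)=1$.

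Next, we view the sequence $(w_n)$ inside $\mathbb{Q}\subset\mathbb{Q}_3$; note that $\mathbb{Z}[1/2]\subset\mathbb{Z}_3$. Since $\alpha\neq\beta$, the general solution of the recurrence gives
\[
w_n=A\alpha^n+B\beta^n
\]
for uniquely determined constants $A,B\in\mathbb{Q}_3$, obtained from $w_0,w_1$ by a Vandermonde system.

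The key step is to show $A\neq 0$. Suppose instead that $A=0$, so $w_n=B\beta^n$ for all $n$.
\begin{itemize}
\item If $B\neq 0$, then $w_0=B\neq 0$ and $\beta=w_1/w_0\in\mathbb{Q}$. This is impossible, because $\beta=(1\pm\sqrt{-11})/2$ is irrational.
\item If $B=0$, then the sequence is identically zero, contradicting the hypothesis.
\end{itemize}
This is the step that uses the rationality of the sequence, and I expect it to be the only real subtlety. Without it, the eigen-direction $\beta$ would give valuations tending to infinity.

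With $A\neq 0$, we have $v_3(A\alpha^n)=v_3(A)$ for all $n$. On the other hand, $v_3(B\beta^n)=v_3(B)+n\to\infty$, or this term is $0$ when $B=0$. Choose $k$ such that $v_3(B)+n>v_3(A)$ for all $n\geq k$. The ultrametric inequality then gives $v_3(w_n)=v_3(A)$ for all $n\ge k$.

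Finally, $v_3(A)$ is a finite rational number. Indeed, $A$ lies in the finite extension generated by $\sqrt{-11}$, which here is $\mathbb{Q}_3$ itself, so $v_3(A)\in\mathbb{Z}$. Taking $M=\max(v_3(A),0)$ yields $v_3(w_n)\le M$ for all $n\ge k$, which proves the lemma. In fact, the argument shows that $v_3(w_n)$ is eventually constant.
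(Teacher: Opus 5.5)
Your proof is correct, and it takes a genuinely different route from the paper's. The paper stays inside $\mathbb{Z}[1/2]$. It writes $w_n=w_0A_n+w_1B_n$ with the fundamental solutions ($A_0=1$, $A_1=0$; $B_0=0$, $B_1=1$) and proves by congruences that $v_3(A_n)=1$ and $v_3(B_n)=0$ for $n\ge 2$ (indeed $A_n/3\equiv -1$ and $B_n\equiv 1 \pmod 3$). It then splits into cases according to whether $v_3(w_0)+1=v_3(w_1)$ and whether the unit parts agree modulo $3$. In the degenerate case it divides out a factor $3$, says to ``repeat the same argument'', and asserts that non-termination would give $3^{t+m}\mid w_n$ for all $m$ and all $n\ge 2$.

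Your $3$-adic diagonalization replaces this case analysis with one structural fact: a unit root $\alpha$ and a root $\beta$ with $v_3(\beta)=1$. It gives the sharper conclusion that $v_3(w_n)$ is eventually equal to $v_3(A)$, and it works verbatim for any nonzero rational sequence. It also isolates exactly where the hypotheses enter, namely only in showing $A\neq 0$, via the irrationality of $\beta$.

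That is precisely the point the paper's iteration handles loosely. The index from which the valuation stabilizes can move. For $w_0=1$, $w_1=3$ one gets $w_2=0$ but $v_3(w_n)=2$ for all $n\ge 3$. In that example the back-extended values $y_0=1/9$, $y_1=1/3$ of the paper's rescaled sequence are not even $3$-integral. An endless chain of degenerate steps is exactly the behaviour along the $\beta$-eigenline, so excluding it requires the input you supply.

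The paper's approach buys complete elementarity (explicit congruences, no $p$-adic numbers). The price is a longer and, as written, not fully justified termination step.
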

\begin{proof}
Consider the sequences $(A_n)$ and $(B_n)$ defined by
\[
A_{n+1} = A_n - 3A_{n-1}
\quad \text{and} \quad
B_{n+1} = B_n - 3B_{n-1},
\]
with initial conditions $$A_0 = 1,\ \ \ \ \ A_1 = 0 \ \ \ \ \  B_0 = 0, \ \ \ \ \ B_1 = 1$$ 
For any $w_0, w_1$, the solution of the recurrence $w_{n+1} = -3w_{n-1}+w_n$ with these initial values is given by
$$w_n = w_0A_n + w_1B_n,\ \ \ n\geq0$$
Indeed, defining $z_n = w_n - (w_0A_n + w_1B_n)$, one checks immediately that the sequence ($z_n$) satisfies the same homogeneous recurrence and has initial conditions $z_0 = z_1 = 0$. By uniqueness of solutions, it follows that $z_n=0$ for all $n$, that is, $w_n = w_0A_n+w_1B_n$. To understand the behavior of $v_3(w_n)$, we analyze $v_3(A_n)$ and $v_3(B_n)$. For the sequence $(A_n)$, from the recurrence $A_n+1=A_n-3A_{n-1}$, with $A_0 =1$ and $A_1 = 0$, we obtain by induction that
$$A_n\equiv 0 \ (\text{mod}\ 3) \ \text{for all}\  n \geq 1$$
Moreover, $A_2 = -3$, hence $A_2 \equiv 6$ (mod $9$), and again from the recurrence we conclude that
$$A_n \equiv 6\ (\text{mod}\ 9) \ \text{for all}\  n \geq 2$$
In particular, $v_3(A_n)=1$ for all $n\geq 2$. For the sequence $(B_n)$, from the recurrence $B_n+1=B_n-3B_{n-1}$, with $B_0 = 0$ and $B_1 = 1$, it follows that
$$B_n\equiv 1 \ (\text{mod}\ 3) \ \text{for all}\  n \geq 1$$
and therefore $v_3(B_n) = 0$ for all $n\geq 1$.

We now turn to the analysis of $v_3(w_n)$. Write
$$w_0=3^ru, \ \ \ \ \ w_1=3^sv$$
with $u, v \in \mathbb{Z}[1/2]$, such that $3 \nmid u$ and $3 \nmid v$. Then
$$v_3(w_0) = r, \ \ \ \ \ v_3(w_1) = s.$$
For $n \geq 2$, since $v_3(A_n) = 1$ and $v_3(B_n) = 0$, we obtain
$$v_3(w_0A_n) = r+1, \ \ \ \ \ v_3(w_1B_n) = s.$$
We consider two cases\\
\textit{Case 1.} $r+1 \neq s$\\
Without loss of generality, assume $r+1 < s$. Then
$$v_3(w_n) = v_3(w_0A_n + w_1B_n) = \text{min}\{r+1, s\} = r+1 \ \ \text{for all} \ \ n \geq 2.$$
Hence $v_3(w_n)$ is bounded above.\\
\textit{Case 2.} $r+1 = s$\\
Write $t= r+1 = s$. For $n \geq 2$,
$$v_3(w_0A_n) = t, \ \ \ \ \ v_3(w_1B_n) = t $$
and therefore
$$w_n=3^{t}(uA'_n+vB'_n)$$
where
$$A'_n \coloneqq \dfrac{A_n}{3}, \ \ \ \ \ B'_n \coloneqq B_n.$$
Note that $A'_n \in \mathbb{Z}$ for $n \geq 2$, $A'_n \equiv -1$ (mod $3$) and $B'_n \equiv 1$ (mod $3$). Define 
$$z_n\coloneqq uA'_n+vB'_n.$$
Then $v_3(w_n) = t+ v_3(z_n)$. For $n \geq 2$

$$z_n \equiv u(-1)+v(1) \equiv u-v \ (\text{mod}\ 3).$$
We now have two possibilities. If $v-u \not\equiv 0$ (mod $3$), then $z_n\not\equiv0$ (mod $3$) for all $n \geq 2$, and therefore
$$v_3(z_n)=0, \ \ \ \ \ v_3(w_n) = t\ \  \text{for all}\  n \geq 2$$
Thus $v_3(w_n)$ is bounded. If $v-u \equiv 0$ (mod $3$), then $3 \mid z_n$ for all $n \geq 2.$ We may therefore write
$$z_n = 3y_n, \ \ \ n \geq 2 $$
Consequently,
$$w_n=3^{t+1}y_n, \ \ \ n \geq 2.$$
Since $z_{n} = uA'_{n}+vB'_{n}$, using the recurrences for $A'_{n}$ and $B'_{n}$ we obtain
\begin{align*}
    z_{n+1} &= uA'_{n+1}+vB'_{n+1}\\
    & = u(A'_{n}-3A'_{n-1})+v(B'_{n}-3B'_{n-1})\\
    &= uA'_n+vB'_n-3(uA'_{n-1}+vB'_{n-1})\\
    & = z_n-3_{n-1}
\end{align*}
This shows that, after dividing by $3$, the sequence $(y_n)$ also satisfies
$$y_{n+1}=y_n-3y_{n-1}  \ \text{for all}\  n \geq 2$$
By the initial argument, there exists a representation $y_n = y_0A_n+y_1B_n$. Writing $A_n = 3A'_n$ for $n \geq 2$, we obtain $y_n=3y_0A'_n+y_1B'_n$. Defining $u_1 \coloneqq 3y_0$ and $v_1\coloneqq y_1$, it follows that
$$w_n = 3^{t+1}y_n=3^{t+1}(u_1A'_n+v_1B'_n) \ \ \ \ n\geq 2.$$
Since $A'_n \equiv -1$ and $B'_n \equiv 1$ (mod $3$) for all $n \geq 2$, we may repeat exactly the same argument previously applied to $u$ and $v$. If this process did not terminate, we would obtain $3^{t+m} \mid w_n$ for all $m\geq 0$ and $n\geq 2$, which would imply $w_2 = w_3=0$ and hence $w_0=w_1=0$. This completes the proof.

\end{proof}
\begin{lemma}\label{7}
For all integers $k\geq 0$ and $m\in \mathbb{Z}$, the number $2^{2k+2}-11m^2$ is not a perfect square, except in the trivial case $m=0$.
\end{lemma}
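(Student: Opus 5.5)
The plan is to rewrite the claim as a statement about representations by the binary quadratic form $x^2+11m^2$ and then argue by $2$-adic descent on $k$. Suppose $2^{2k+2}-11m^2=x^2$ for some integer $x$, which we may take to be $\geq 0$. Then
\[
x^2+11m^2=4^{k+1},
\]
and we must show that $m=0$. The induction is on $k\geq 0$, and the tool is a congruence analysis modulo $8$.

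\textbf{Base case $k=0$.} Here $x^2+11m^2=4$. If $m\neq 0$, then $11m^2\geq 11>4$, which is impossible. So $m=0$ (and $x=2$).

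\textbf{Inductive step, $k\geq 1$.} Now the right-hand side $4^{k+1}$ is divisible by $16$. Reducing $x^2+11m^2\equiv x^2-m^2 \pmod 4$ shows that $x$ and $m$ have the same parity.
\begin{itemize}
\item[(a)] Suppose both are odd. Then $x^2\equiv m^2\equiv 1 \pmod 8$, so $x^2+11m^2\equiv 12\equiv 4\pmod 8$. This contradicts divisibility by $16$ (indeed already by $8$).
\item[(b)] So both are even. Write $x=2x'$ and $m=2m'$ and divide the equation by $4$, which gives $x'^2+11m'^2=4^{k}=2^{2(k-1)+2}$. By the induction hypothesis $m'=0$, hence $m=0$.
\end{itemize}
This closes the induction and proves the claim for all $k\geq 0$.

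\textbf{Conceptual check.} The same result follows from arithmetic in $\mathbb{Z}[(1+\sqrt{-11})/2]$. This ring has class number one, and $2$ is inert in it because $-11\equiv 5\pmod 8$. Hence every element of norm $4^{k+1}$ is $\pm 2^{k+1}$, which forces $m=0$. I would present the elementary descent in the proof, since it needs nothing beyond congruences.

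\textbf{Main obstacle.} The only point needing care is the parity case analysis in step (a): one has to check that the mixed-parity case is excluded and that the both-odd case genuinely contradicts divisibility by $8$ once $k\geq 1$. Everything else is routine bookkeeping.
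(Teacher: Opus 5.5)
Your proof is correct and complete. The mod~$4$ reduction $x^2+11m^2\equiv x^2-m^2$ forces $x$ and $m$ to have equal parity. The both-odd case gives $x^2+11m^2\equiv 12\equiv 4 \pmod 8$, which is incompatible with $8\mid 4^{k+1}$ once $k\geq 1$. Halving then reduces $x^2+11m^2=4^{k+1}$ to $x'^2+11m'^2=4^{k}$, and the descent bottoms out at $x^2+11m^2=4$, where $m=0$ is forced by size.

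The paper states Lemma~\ref{7} without any proof and simply invokes it at the end of Step~2 of Item~(2). So there is no argument in the paper to compare with, and your descent fills that gap. It is exactly what the application needs: there $\sqrt{2^{2k+2}-11m^2}$ must lie in $\mathbb{Z}[1/2]$, and a rational square root of an integer is an integer.

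Your algebraic cross-check is also right, and it needs less than you claim. Since $-11\equiv 5 \pmod 8$, the prime $2$ is inert in $\mathbb{Q}(\sqrt{-11})$, so $(2)$ is the only prime ideal whose norm is a power of $2$. Hence any $\alpha$ of norm $4^{k+1}$ generates $(2)^{k+1}$ and so equals $\pm 2^{k+1}$. This works without appealing to class number one.
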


Finally, we prove Item (2). The proof will be divided into steps.\\
\textit{Step 1.} Consider
$$P = \begin{pmatrix} 0 & 3 \\ -1 & 1 \end{pmatrix} = \begin{pmatrix} A_1 & B_1 \\ C_1 & D_1 \end{pmatrix},$$
and write $$P^n = \begin{pmatrix}  A_n & B_n \\ C_n & D_n \end{pmatrix}.$$
From the relation $P^{n+1} = P^nP$, we obtain $$P^{n+1} = \begin{pmatrix}  A_n & B_n \\ C_n & D_n \end{pmatrix}\begin{pmatrix} 0 & 3 \\ -1 & 1 \end{pmatrix} =\begin{pmatrix}  -B_n & 3A_n+B_n \\ -D_n & 3C_n+D_n \end{pmatrix} = \begin{pmatrix}  A_{n+1} & B_{n+1} \\ C_{n+1} & D_{n+1} \end{pmatrix}.$$
Thus,
\begin{align}\label{4}
   A_{n+1} = & -B_n, \ \ \ \ \ B_{n+1} = 3A_n +B_n
\end{align}

\vspace{-0.8cm}

\begin{align}\label{5}
   C_{n+1} = & -D_n, \ \ \ \ \ D_{n+1} = 3C_n+D_n.
\end{align}

\hspace{-0.60cm} \textit{Result 1.} For every $n \geq 1$, we have $D_n\equiv 1$ (mod  $3$).\\
 For $n=1$, we have $D_1=1$. Suppose that $D_{n}\equiv 1$ (mod $3$). From relations (\ref{5}) it follows immediately that $D_{n+1} = -3D_{n-1}+D_{n} \equiv 1$ (mod $3$)

\hspace{-0.60cm} \textit{Result 2.} For every $n \geq 1$, we have $B_n\equiv 3$ (mod $9$).\\
For $n=1$, we have $B_1 = 3$. Suppose that $B_n \equiv 3$ (mod $9$). From relations (\ref{4}) we obtain $B_{n+1} = -3B_{n-1} +B_{n} \equiv 3$ (mod $9$).\\

\hspace{-0.6cm} \textit{Step 2.} Let $h \in H$ be such that $h^{f^{n}} \in H$ for all $n \in \mathbb{N}$. We shall prove that $h=1$. Write $$h = \begin{pmatrix}  a & 3b \\ c & d \end{pmatrix} \in H.$$ To conclude that $h=1$, it suffices to prove that $(a-d) = 0$, $b = 0$, and $c = 0$ (since $ad =1 \ \ \text{and} \ \ a=d$). For $n \in \mathbb{N}$, we have $$P^{n} = \begin{pmatrix}  A_n & B_n \\ C_n & D_n \end{pmatrix}, \ \ \ \ P^{-n} = \dfrac{1}{3^n}\begin{pmatrix}  D_n & -B_n \\ -C_n & A_n \end{pmatrix}.$$
Therefore,
\begin{align*}
 h^{f^n} = & P^{-n}hP^{n} = \dfrac{1}{3^n}\begin{pmatrix}  D_n & -B_n \\ -C_n & A_n \end{pmatrix}\begin{pmatrix}  a & 3b \\ c & d \end{pmatrix}\begin{pmatrix}  A_n & B_n \\ C_n & D_n \end{pmatrix}. 
\end{align*}
A direct computation shows that the $(1,2)$-entry of $h^{f^n}$ equals
$$\dfrac{1}{3^n}\left(B_nD_n(a-d)+3bD_n^{2}-cB_n^{2}\right).$$
Since $h^{f^n} \in H$ for all $n$, this entry must lie in $3\mathbb{Z}[1/2]$. Hence its multiplicity of $3$ must increase with $n$.

Define $$b_n \coloneqq \dfrac{B_n}{3}, \ \ \ \ \ \ d_n \coloneqq D_n.$$ 
By Results 1 and 2, we have $b_n, d_n \in \mathbb{Z}$ and $b_n, d_n \equiv 1$ (mod $3$). Thus the $(1,2)$-entry can be written as
\begin{align*}
 \dfrac{1}{3^{n-1}}\left((a-d)b_nd_n+bd_n^{2}-3cb_n^{2}\right).
\end{align*}
Define $$\Sigma_n = (a-d)b_nd_n+bd_n^{2}-3cb_n^2.$$
Then $v_3(\Sigma_n) \geq n$ for all $n$. Setting $$X \coloneqq (a-d), \ \ \ \ \  Y \coloneqq b,\ \ \ \ \ Z \coloneqq c,$$ we obtain $ \Sigma_n =  Xb_nd_n+Yd_n^{2}-3Zb_n^2$. From this point on, the condition $v_3(\Sigma_n) \geq n$ for all $n$ will necessarily imply that $X=Y=Z = 0$. We now show how this occurs. Adding and subtracting $Yb_nd_n$ and $3Yb_n^2$, we obtain
\begin{align*}
    \Sigma_n & = Xb_nd_n+Yd_n^{2}-3Zb_n^2\\ &= Xb_nd_n+Yd_n^{2}-3Zb_n^2+Yb_nd_n-Yb_nd_n+3Yb_n^{2}-3Yb_n^{2}\\
    & = Y(d_n^2-b_nd_n+3b_n^2)+(X+Y)b_nd_n-3(Z+Y)b_n^2.
\end{align*}
Rearranging the terms, we obtain
\begin{align*}
    \Sigma_n & = Y(d_n^2-b_nd_n+3b_n^2)+(X+Y)b_nd_n-3(Z+Y)b_n^2.
\end{align*}
It is known that $d_n^2-b_nd_n+3b_n^2 = 3^n$ for all $n \in \mathbb{N}$, since $b_n$ and $d_n$ satisfy the same recurrence relations as $B_n$ and $D_n$, respectively, with initial conditions $b_1 = 1, b_2 = 1$ and $d_1 = 1, d_2 = -2$, and the general solutions are $b_n = \frac{2(\sqrt{3})^n}{\sqrt{11}}\sin(n\theta)$ and $d_n = (\sqrt{3})^n\left(\cos(n\theta)+\frac{1}{\sqrt{11}}\sin(n\theta)\right)$. Therefore,
$$\Sigma_n = Y3^n+(X+Y)b_nd_n-3(Z+Y)b_n^2.$$
Since $v_3(\Sigma_n) \geq n$ for all $n$, it follows that
$$(X+Y)b_nd_n-3(Z+Y)b_n^2 \equiv 0\ (\text{mod}\ 3^n) \ \text{for all} \ n.$$
Define
$$w_n = (X+Y)b_nd_n-3(Z+Y)b_n^2$$
Using the recurrences
$$b_{n+1} = -3b_{n- 1}+b_n, \ \ \ \ \ \ d_{n+1} = -3d_{n-1}+d_n$$
one verifies by direct substitution that $w_{n+1} = -3w_{n-1}+w_n$. By Lemma \ref{6}, it follows that $w_0=w_1 = 0$. From the definition of $w_n$ we conclude that $X+Y=0$ and $Z+Y=0$. Thus,
$$X=-Y=Z, \ \ \text{that is,} \ \ a-d=-b=c.$$
Setting $t\coloneqq a-d=c$, we obtain $b=-t$ and $d=a-t$. Since $h \in H$, we have $ad-3bc=1$, and therefore $a(a-t)-3(-t)t = 1$, that is, $a^2-at+3t^2-1 =0$. Solving this quadratic equation in $a$, we obtain
$$a = \dfrac{t\pm\sqrt{4-11t^2}}{2}$$
Since $a \in \mathbb{Z}[1/2]$, we may write $t=\dfrac{m}{2^k}$ with $m\in \mathbb{Z}$ and $k \in \mathbb{N}$. Substituting, we obtain
$$a = \dfrac{m\pm\sqrt{2^{2k+2}-11m^2}}{2^{k+1}}$$
For $a \in \mathbb{Z}[1/2]$, it is necessary that $2^{2k+2}-11m^2$ be a perfect square. However, in Lemma \ref{7} we showed that this occurs only when $m=0$. Consequently, $t = 0$, as desired. This completes the proof.

\(\hfill\square\)

Before we prove the next item, we prove the following corollary of
Item (2).
\begin{corollary}
Let $p\geq3$ be a prime number and let $G = \PSL(\mathbb{Z}[1/p])$. If $K$ is the subgroup of $G$ consisting of lower triangular matrices, then $A\wr_{K\backslash G} \PSL(\mathbb{Z}[1/p])$ is self-similar, where $A$ is a finitely generated abelian group.
\end{corollary}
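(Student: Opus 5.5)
The plan is to deduce the corollary directly from Theorem \ref{thmx:ZwrG-SelfSimilar}. We use the self-similar representation of $G=\PSL(\mathbb{Z}[1/p])$ from Item (1). It is induced by the single virtual endomorphism $f\colon H\to G$, where $H$ is the index-$3$ subgroup of matrices whose $(1,2)$-entry lies in $2\mathbb{Z}[1/p]$, and
\[
\begin{pmatrix} a & 2b\\ c & d\end{pmatrix}^{f}=\begin{pmatrix} a & b\\ 2c & d\end{pmatrix}.
\]
So it suffices to check two things: that $K$, the lower triangular subgroup, has infinite index in $G$, and that $K$ satisfies condition $(\Omega)$ with respect to $f$ (here $s=1$). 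Everything is taken modulo $\pm I$, and each check is unaffected by the sign.

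For infinite index, we consider the unipotent matrices $u_n=\begin{pmatrix}1&n\\0&1\end{pmatrix}$ with $n\in\mathbb{Z}$. Since $u_nu_m^{-1}=u_{n-m}$ is lower triangular only when $n=m$, the cosets $Ku_n$ are pairwise distinct. Hence $[G:K]=\infty$.

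Next we verify the three parts of $(\Omega)$.

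\emph{(ii)} Every lower triangular matrix has $(1,2)$-entry $0\in 2\mathbb{Z}[1/p]$, so $K\cap H=K$. Moreover $f$ sends $\begin{pmatrix}a&0\\c&d\end{pmatrix}$ to $\begin{pmatrix}a&0\\2c&d\end{pmatrix}\in K$, so $(K\cap H)^f\le K$.

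\emph{(iii)} If $h=\begin{pmatrix}a&2b\\c&d\end{pmatrix}\in H$ and $h^f\in K$, then the $(1,2)$-entry of $h^f$ vanishes. That entry is $b$, so $b=0$ and $h\in K$.

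\emph{(i)} $H_\omega$ is generated by the subgroups $L\le H$ with $L^{f}\le L$. The first observation in the proof of Item (1) shows that any such $L$ consists of lower triangular matrices. Indeed, $L\le H$ and $L^{f^n}\le L\le H$ for all $n$, so the $(1,2)$-entry $2b$ of each element of $L$ satisfies $b/2^{n}\in\mathbb{Z}[1/p]$ for every $n$. Because $2$ is not invertible in $\mathbb{Z}[1/p]$ when $p\ge 3$, this forces $b=0$. Thus every such $L$ is contained in $K$, and therefore $H_\omega\le K$.

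The point that needs the most care is (i). Earlier the observation was used only for normal $f$-invariant subgroups, whereas here it must be applied to arbitrary $f$-invariant subgroups. The argument above does not use normality, but we would spell it out, including the $2$-adic valuation argument, which relies on $p\neq2$.

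With $(\Omega)$ and infinite index established, we apply Theorem \ref{thmx:ZwrG-SelfSimilar}. If $A$ is finite, item (1) gives that $A\wr_{K\backslash G}G$ is self-similar. If $A$ is infinite, item (2) applies, because $G$ is non-torsion: for example, $\begin{pmatrix}1&1\\0&1\end{pmatrix}$ has infinite order in $G$. In both cases $A\wr_{K\backslash G}\PSL(\mathbb{Z}[1/p])$ is self-similar.
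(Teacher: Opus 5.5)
Your proof is correct and follows the paper's argument. You take the virtual endomorphism $f$ from Item (1), check that the lower triangular subgroup $K$ has infinite index and satisfies condition $(\Omega)$, and invoke Theorem~\ref{thmx:ZwrG-SelfSimilar}; the paper does the same by noting $K=H_\omega$, $K^f\le K$, and $h^f\in K\Rightarrow h\in K$. You also supply details the paper leaves implicit: the cosets $Ku_n$ witnessing infinite index, the $2$-adic argument that every $f$-invariant subgroup of $H$ (normal or not) lies in $K$, and the non-torsion check needed for item (2) when $A$ is infinite.
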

\begin{proof}
As shown in the proof of Theorem \ref{thmx:PSL-FinitelyPresentedTransitiveSelfSimilar} Item (1), $G$ admits a self-similar representation induced by the virtual endomorphism $f$. Observe that $K$ coincides with the parabolic subgroup associated with $f$, that is,
$K = H_\omega$ and has infinite index in $\PSL(\mathbb{Z}[1/p])$. Naturally, $H_\omega \leq K.$ By construction,
$$(K\cap H)^f=K^f\leq K.$$
It remains to verify the final item of the condition $(\Omega)$. By the construction of $f$, we have $h^f\in K$ only if $h\in K$. Consequently, by Theorem \ref{thmx:ZwrG-SelfSimilar}, the group $A\wr_{K\backslash G} \PSL(\mathbb{Z}[1/p])$ is self-similar.
\end{proof}

\subsection{Proof of Theorem \ref{thmx:PSL-FinitelyPresentedTransitiveSelfSimilar} - Item (3)} We recall that 
this item considers the group $G = \PSL(\mathbb{Z}[1/2])$ and $K = \PSL(\mathbb{Z})$ the subgroup of $G$. We need to prove that 
$A \wr_{K\backslash G} \PSL(\mathbb{Z}[1/2])$ is self-similar. 

Note that $G$ admits a self-similar representation induced by the virtual endomorphism $f$ constructed in the proof of Theorem \ref{thmx:PSL-FinitelyPresentedTransitiveSelfSimilar} Item (2), and $K$ has infinite index in $\PSL(\mathbb{Z}[1/2])$. We verify that the condition $(\Omega)$ is satisfied.

\begin{enumerate}
    \item[(i)] By Item (2), the parabolic subgroup satisfies
    $H_{\omega}=\{1\}$.
    Hence,
    \[
    H_{\omega}\leq K.
    \]

    \item[(ii)] We show that $(H\cap K)^f\leq K$. Indeed, let
    \[
    h=
    \begin{pmatrix}
    a & 3b\\
    c & d
    \end{pmatrix}
    \in H\cap K.
    \]
    Then $a,3b,c,d\in\mathbb{Z}$. Since $3b\in\mathbb{Z}$ and $b\in\mathbb{Z}[1/2]$, it follows that $b\in\mathbb{Z}$. Now,
    \[
    h^f=
    \begin{pmatrix}
    -b+d & a+b-3c-d\\
    c & a+b
    \end{pmatrix}.
    \]
    Using the observation above, all entries of $h^f$ belong to $\mathbb{Z}$, hence $h^f\in K$.

    \item[(iii)] Let $h=
    \begin{pmatrix}
    a & 3b\\
    c & d
    \end{pmatrix}
    \in H$,
    and suppose that $h^{f}\in K$.
    That is,
    \[
    \begin{pmatrix}
    -b+d & a+b-3c-d\\
    c & a+b
    \end{pmatrix}
    \in K.
    \]
    Therefore, $b, \ a+b,\ -b+d, \ a+b-3c-d\in\mathbb{Z}$.
    It follows that $a, b, d\in\mathbb{Z}$
    and $3c\in\mathbb{Z}$.
    Since $c\in\mathbb{Z}[1/2]$, we conclude that $c\in\mathbb{Z}$.
    Hence, $h\in K$.
\end{enumerate}
Thus all items are satisfied. Therefore, by Theorem \ref{thmx:ZwrG-SelfSimilar}, the group $A \wr_{K\backslash G} \PSL(\mathbb{Z}[1/2])$ is self-similar.

\(\hfill\square\)

\begin{corollary} \label{cor4.2} The groups $\mathbb{Z} \wr \mathbb{Z}^{(\omega)}$ and $\mathbb{Z} \wr \mathbb{Z}^{\omega}$ are self-similar.
\end{corollary}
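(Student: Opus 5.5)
The plan is to obtain both groups as regular wreath products $\mathbb{Z}\wr_{K\backslash G}G$ with $K=\{1\}$, so that $K\backslash G=G$, and then apply Theorem~\ref{thmx:ZwrG-SelfSimilar}(2) with $A=\mathbb{Z}$. Here $G=\mathbb{Z}^{(\omega)}$ (the restricted sum) or $G=\mathbb{Z}^{\omega}$ (the full product). Both are torsion-free, so they are non-torsion, and the trivial subgroup has infinite index. With $K=\{1\}$, condition $(\Omega)$ says three things. Item (i) says $H_\omega=\{1\}$, that is, the representation is strongly self-similar. Item (ii) is automatic. Item (iii) says each $f_i$ is injective. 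So the corollary reduces to one task: \emph{construct finitely many virtual endomorphisms $f_i\colon H_i\to G$ that are injective and whose parabolic subgroup is trivial.} By the remark after the definition of $H_\omega$, triviality of $H_\omega$ also forces the $\mathbf f$-core to be trivial, so such a representation is faithful automatically.

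For the construction I would index the coordinates by $\mathbb{N}$ and work with $s=1$. Take $H=2\mathbb{Z}\times\mathbb{Z}^{\omega}$ (respectively $2\mathbb{Z}\oplus\mathbb{Z}^{(\omega)}$), which has index $2$. Define $f$ as ``halve the first coordinate, then push the remaining coordinates along'', arranged so that every coordinate is fed into the halved slot infinitely often. Injectivity forces $f$ to be essentially a coordinate bijection followed by halving, so I would first try an infinite-order $\mathbb{Z}$-linear automorphism $\pi$ of $\mathbb{Z}^{\omega}$ (respectively $\mathbb{Z}^{(\omega)}$) that mixes coordinates. Concretely, this means block lower-triangular unipotent maps on consecutive finite blocks, and then setting $f(y)=\pi(y_1/2,y_2,y_3,\dots)$. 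Both maps are injective, so $f$ is injective. The same recipe works for the product and for the restricted sum, provided $\pi$ preserves finite support in the restricted case.

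To show $H_\omega=\{1\}$, take $y\in H_\omega$. Then $f^n(y)$ has even first coordinate for every $n\ge0$. I would unwind this as a linear recurrence on the coordinates, as in Lemma~\ref{6} and Step~2 of Item~(2), using $2$-adic valuations in place of $3$-adic ones. The first coordinate of $f^n(y)$ is a fixed $\mathbb{Z}[1/2]$-linear combination of finitely many $y_j$, with denominators $2^n$. Requiring it to lie in $2\mathbb{Z}$ for all $n$ forces each relevant $y_j$ to have unbounded $2$-adic valuation, hence to be $0$. Ranging over all coordinates then gives $y=0$.

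The main obstacle is choosing $\pi$ so that this valuation argument genuinely reaches \emph{every} coordinate. A pure shift fails, because each coordinate passes the halving slot only once. Naive choices such as $(y_1/2+y_2,y_2,y_3,\dots)$ fail too, since they leave a nontrivial fixed line such as $\{(2t,t,0,\dots)\}$ in $H_\omega$. If a single $f$ cannot be arranged, my fallback is $s=2$ with intransitive data. Take $f_1$ to be the halving map on $2\mathbb{Z}\times\mathbb{Z}^{\omega}$ composed with the shift that moves coordinates down by one, and $f_2$ the same halving map composed with a coordinate permutation that cyclically rotates the first $k$ slots for growing $k$. The requirement $y\in W_k$ for all $k$ is then a condition along all words in $f_1,f_2$. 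This lets each coordinate be brought to the halving slot arbitrarily many times, which gives $H_\omega=\{1\}$, and each $f_i$ remains injective.
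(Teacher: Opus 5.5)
Your reduction is correct and is the same as the paper's. For $K=\{1\}$, condition $(\Omega)$ says exactly that $H_\omega=\{1\}$ and that every $f_i$ is injective; Theorem~\ref{thmx:ZwrG-SelfSimilar}(2) with $A=\mathbb{Z}$ then applies to the torsion-free group $G$. The gap is that the whole content of the corollary is then the construction of such $f_i$, and none of your candidates works.

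\begin{itemize}
\item \emph{Your $s=1$ suggestion.} If $\pi$ is block lower-triangular with respect to consecutive finite blocks $B_1\ni 1, B_2,\dots$, then the first block of $\pi(z)$ depends only on $z|_{B_1}$. Hence $\{y : y|_{B_1}=0\}$ lies in $H$, is $f$-invariant, and so lies in $H_\omega$. Since $G$ is abelian, it is even a nontrivial $f$-invariant normal subgroup, so the representation is not faithful.
\item \emph{Your valuation step.} Evenness of the first coordinate of $f^n(y)$ for all $n$ does not force each $y_j$ to be highly $2$-divisible, because linear combinations can stay even. Your own fixed line $(2t,t,0,\dots)$ is a counterexample to that step.
\item \emph{Your $s=2$ fallback.} It is not well defined as stated. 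On $\mathbb{N}$-indexed coordinates, a shift moving every coordinate down by one discards $y_1$, so $f_1(2e_1)=0$, $f_1$ is not injective, and (iii) fails. ``Rotating the first $k$ slots for growing $k$'' is not a single homomorphism. If you combine an injective upward shift with rotations of consecutive finite blocks, an entry can never move below the block it sits in, so again $\{y : y|_{B_1}=0\}\subseteq H_\omega$.
\end{itemize}

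The missing idea is this. $H_\omega$ only involves \emph{forward} compositions of the $f_i$, so you need index maps whose \emph{positive} monoid acts transitively on the coordinates, including returning an entry to the halving slot after it is halved. For instance:
\begin{itemize}
\item Identify $\mathbb{Z}^{\omega}$ with $\mathbb{Z}^{\mathbb{Z}}$, and $\mathbb{Z}^{(\omega)}$ with $\mathbb{Z}^{(\mathbb{Z})}$.
\item Let $H=\{y : y_0\in 2\mathbb{Z}\}$, and let $\bar y$ denote $y$ with $y_0$ replaced by $y_0/2$.
\item Put $(y^{f_1})_k=\bar y_{k+1}$ on $H$ and $(y^{f_2})_k=y_{k-1}$ on all of $G$. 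An index-one domain is allowed; the paper's $f_2$ is also defined on all of $G$.
\end{itemize}
Both maps are injective and preserve finite support.

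Now take $y\in H_\omega$, so every word in $f_1,f_2$ is defined on $y$. Fix an index $i$. If $i\ge 0$, apply $f_1$ $i$ times; the entry $y_i$ reaches slot $0$ without being halved. If $i<0$, apply $f_2$ $|i|$ times instead. Then apply $f_1f_2$ $N$ times: each round halves that entry and returns it to slot $0$. Hence $2^N\mid y_i$ for every $N$, so $y_i=0$ for all $i$, and $H_\omega=\{1\}$.

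Do not take the paper's explicit choice as a model. Its $f_1$ (halve, then swap the pairs $(2i-1,2i)$) and its $f_2$ (rotate the triples $(3i-2,3i-1,3i)$) both preserve the blocks $\{6k+1,\dots,6k+6\}$. So the vector with a single $1$ in position $7$ lies in $H_\omega$, and the asserted triviality of $H_\omega$ fails for exactly the reason you identified.
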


\begin{proof}
Consider
$$G = \mathbb{Z}^{\omega} = \{(n_1, n_2, n_3, n_4, n_5, n_6, ...) \mid n_i \in \mathbb{Z}\},$$
$$H = \{(2n_1, n_2, n_3, n_4, n_5, n_6, ...) \mid n_i \in \mathbb{Z}\},$$
and the homomorphisms $f_1: H \rightarrow G$ and $f_2: G \rightarrow G$ that extend, respectively, the maps
\[f_{1}\colon (2n_1, n_2, n_3, n_4, ..., n_{2i - 1}, n_{2i}, \dots) \mapsto (n_2, n_1, n_4, n_3, ..., n_{2i}, n_{2i - 1}, \dots)\]
and
\[f_{2}\colon (n_1, n_2, n_3, \dots, n_{3i - 2}, n_{3i - 1}, n_{3i}, \dots) \mapsto (n_2, n_3, n_1, \dots, n_{3i - 1}, n_{3i}, n_{3i-2}, \dots).\]
Note that the group $G$ is self-similar, induced by the virtual endomorphisms $f_1$ and $f_2$. Moreover, $f_1$ and $f_2$ are monomorphisms and the parabolic subgroup $H_{\omega}$ is trivial. By Theorem \ref{thmx:ZwrG-SelfSimilar}, the group $\mathbb{Z} \wr_{H_{\omega} \setminus G} G = \mathbb{Z} \wr \mathbb{Z}^{\omega}$ is self-similar. 

For the first part, the group $\mathbb{Z}^{(\omega)}$ is $\{f_1, f_2\}$-semi-invariant, hence $\mathbb{Z} \wr \mathbb{Z}^{(\omega)}$ is self-similar.

\end{proof}

\begin{corollary}\label{cor:Contratil}
    Let $G$ be a non-torsion group with a self-similar representation induced by the virtual endomorphisms $f_i\colon H_i\to G$ for $1 \leq i \leq s$. If $K = H_\omega = 1$ and $\text{Ker}\,(f_i) = \{1\}$ for every $i$, then $A\wr G$ is self-similar, for every finitely generated abelian group \(A\).
\end{corollary}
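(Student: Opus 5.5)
Since $K=H_\omega=\{1\}$, the coset space $K\backslash G$ is $G$ with the right regular action, so $A\wr_{K\backslash G}G=A\wr G$ is the regular restricted wreath product. The corollary will then follow from Theorem~\ref{thmx:ZwrG-SelfSimilar} once two things hold: $K$ satisfies condition $(\Omega)$, and $K$ has infinite index in $G$. I will check these and then apply both items of the theorem.

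\emph{Condition $(\Omega)$.} Item (i) is immediate, since $H_\omega=\{1\}=K$. For (ii), $K\cap H_i=\{1\}$, so $(K\cap H_i)^{f_i}=\{1\}\le K$. For (iii), if $h\in H_i$ and $h^{f_i}\in K=\{1\}$, then $h\in\ker f_i=\{1\}=K$. This is exactly where the injectivity hypothesis on the $f_i$ is used. It also makes the maps $\lambda_i$ of Lemma~\ref{função_lambda} the inclusions $H_i\to G$, $h\mapsto h^{f_i}$, which are visibly injective.

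\emph{Infinite index.} $K$ has infinite index precisely when $G$ is infinite, and this holds because $G$ is non-torsion. It is worth stating explicitly, since Theorem~\ref{thmx:ZwrG-SelfSimilar} requires infinite index.

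\emph{Conclusion.} Write $A\cong\mathbb{Z}^d\oplus T(A)$ with $T(A)$ finite. If $A$ is finite, item (1) of Theorem~\ref{thmx:ZwrG-SelfSimilar} gives self-similarity of $A\wr G$. If $A$ is infinite, item (2) applies, because $G$ is non-torsion; its proof passes through the case $\mathbb{Z}\wr_{K\backslash G}G$ and then an induction along the decomposition of $A$, following \cite[Proposition 5.2]{DSS}. The only point needing care is the bookkeeping that the regular case really is the case $X=K\backslash G$ with $K=1$. Beyond that I expect no obstacle: the corollary is a direct specialization, and the substantive work sits inside Theorem~\ref{thmx:ZwrG-SelfSimilar}.
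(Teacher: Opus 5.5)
Your proposal is correct and matches the paper's approach. The paper gives no separate proof: it treats the corollary as the specialization $K=H_\omega=\{1\}$ of Theorem~\ref{thmx:ZwrG-SelfSimilar}, just as in the proof of Corollary~\ref{cor4.2}, with item (iii) of $(\Omega)$ reducing to $\ker f_i=\{1\}$ and infinite index reducing to $G$ being infinite (which non-torsion guarantees). One wording slip: with $L_i=\{1\}$, the maps $\lambda_i$ are the $f_i$ themselves, injective by the kernel hypothesis, not inclusions.
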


\subsection{Proof of Theorem \ref{thm:BGcontratil}} \label{Subsec4.1} Recall that
Theorem \ref{thm:BGcontratil} establishes that, under certain 
conditions, when $B$ is a finite abelian group, the permutational 
wreath product $B\wr_{K\setminus G} G$ is a contracting self-
similar group. In order to prove it, we need the following lemma.

\begin{lemma}\label{Sndeproduto}
Let $G$ be a self-similar group. Then, for any $a,b\in G$ and every $n\geq 0$,
\[
S_n(ab)\subseteq S_n(a)\,S_n(b).
\]
\end{lemma}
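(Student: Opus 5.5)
Since $S_n(g)$ is built by iterating the one-step operation $x\mapsto \theta_i(x,t)^{f_i}$, the natural approach is induction on $n$. Everything then reduces to the corresponding statement at the first level: a cocycle identity for the Schreier functions $\theta_i$.

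The base case $n=0$ is immediate: $S_0(ab)=\{ab\}=S_0(a)S_0(b)$.

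For the first level, fix $i\in\{1,\dots,s\}$ and $t\in T_i$. Let $t'\in T_i$ be the transversal element with $H_it'=H_ita$, and let $t''\in T_i$ be the one with $H_it''=H_it'b=H_itab$. Then
\[
\theta_i(ab,t)=t\,ab\,(t'')^{-1}=\bigl(t a (t')^{-1}\bigr)\bigl(t' b (t'')^{-1}\bigr)=\theta_i(a,t)\,\theta_i(b,t'),
\]
and both factors lie in $H_i$. Since $f_i$ is a homomorphism on $H_i$,
\[
\theta_i(ab,t)^{f_i}=\theta_i(a,t)^{f_i}\,\theta_i(b,t')^{f_i}\in S_1(a)\,S_1(b).
\]
Taking the union over all $i$ and $t$ gives $S_1(ab)\subseteq S_1(a)S_1(b)$. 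The same identity shows that $S_1$ applied to a product $xy$ lands in $S_1(x)S_1(y)$. Here $S_1(x)$ is understood as $\{\theta_i(x,t)^{f_i}\mid t\in T_i,\ 1\le i\le s\}$, and for a set $X$ we write $S_1(X)=\bigcup_{x\in X}S_1(x)$. In this notation $S_{n+1}(g)=S_1(S_n(g))$.

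For the inductive step, assume $S_n(ab)\subseteq S_n(a)S_n(b)$. Every element of $S_n(ab)$ then has the form $xy$ with $x\in S_n(a)$ and $y\in S_n(b)$. By the first-level case,
\[
S_1(xy)\subseteq S_1(x)\,S_1(y)\subseteq S_{n+1}(a)\,S_{n+1}(b).
\]
Taking the union over all elements of $S_n(ab)$ gives $S_{n+1}(ab)=S_1(S_n(ab))\subseteq S_{n+1}(a)S_{n+1}(b)$.

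The main point needing care is the cocycle identity in the first-level step: the second factor is evaluated at the shifted transversal element $t'=t^{\sigma(a)}$, not at $t$. This is harmless because $S_1(b)$ already ranges over all $t\in T_i$ and all $i$. One must also check that the coset action keeps $t'$ inside the same transversal $T_i$, which holds since $\sigma$ permutes each $T_i$ separately.
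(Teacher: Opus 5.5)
Your proof is correct and follows the paper's argument: induction on $n$, with the inductive step driven by the Schreier cocycle identity $\theta_i(xy,t)=\theta_i(x,t)\,\theta_i(y,t^{\sigma(x)})$, after which the homomorphism $f_i$ is applied. The only difference is organizational: you isolate the one-step inclusion $S_1(xy)\subseteq S_1(x)S_1(y)$ and write $S_{n+1}=S_1(S_n(\cdot))$, whereas the paper carries out the same computation inline for an arbitrary $z\in S_{n+1}(ab)$.
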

\begin{proof}
Let $f_i\colon H_i\longrightarrow G$, for  $1\leq i\leq s$, be the virtual endomorphisms inducing the self-similar representation of $G$. The proof proceeds by induction on $n$. For $n=0$, by the definition $S_0(g)=\{g\}$ for every $g\in G$, we have
\[
S_0(ab)=\{ab\}\subseteq \{a\}\{b\}=S_0(a)S_0(b).
\]
Now assume that, for some $n\geq 0$, $S_n(ab)\subseteq S_n(a)S_n(b)$. We will show that
\[
S_{n+1}(ab)\subseteq S_{n+1}(a)S_{n+1}(b).
\]
Let $z\in S_{n+1}(ab)$. By the definition of $S_{n+1}(ab)$, there exist $x\in S_n(ab)$ and a transversal representative $t_{ij}\in T_i$
such that
\[
z=\theta_i(x,t_{ij})^{f_i}.
\]
By the induction hypothesis, $x=uv$,
where $u\in S_n(a)$ and $v\in S_n(b)$. Recall that
\[
\theta_i(g,t_{ij})
=
t_{ij}\,g\,t_{ij^{\sigma_i(g)}}^{-1},
\]
where $\sigma_i(g)$ denotes the permutation induced by $g$ on the transversal representatives. Thus, $\theta_i(x,t_{ij}) = t_{ij}\,x\,t_{ij^{\sigma_i(x)}}^{-1}$. Since $x=uv$, it follows that $j^{\sigma_i(x)}=j^{\sigma_i(u)\sigma_i(v)}$,
and therefore,
\[
\begin{aligned}
\theta_i(x,t_{ij})
&=
t_{ij}\,uv\,t_{ij^{\sigma_i(u)\sigma_i(v)}}^{-1} \\
&=
(t_{ij}u\,t_{ij^{\sigma_i(u)}}^{-1})
(t_{ij^{\sigma_i(u)}}v\,t_{ij^{\sigma_i(u)\sigma(v)}}^{-1}).
\end{aligned}
\]
Observing that $\theta_i(u,t_{ij}) = t_{ij}u\,t_{ij^{\sigma_i(u)}}^{-1}$ and $\theta_i\!(v,t_{i,j^{\sigma_i(u)}}) = t_{ij^{\sigma_i(u)}}v\,t_{ij^{\sigma_i(u)\sigma_i(v)}}^{-1}$,
we obtain
\[
\theta_i(x,t_{ij})
=
\theta_i(u,t_{ij})\,
\theta_i\!(v,t_{ij^{\sigma_i(u)}}).
\]
Applying $f_i$, we conclude that
\[
z
=
\theta_i(u,t_{ij})^{f_i}\,
\theta_i\!(v,t_{ij^{\sigma_i(u)}})^{f_i}.
\]

By the definition of $S_{n+1}(a)$ and $S_{n+1}(b)$, we have 
$\theta_i(u,t_{ij})^{f_i}\in S_{n+1}(a)$ and $\theta_{i}(v,t_{i,j^{\sigma(u)}})^{f_i}\in S_{n+1}(b)$. Hence, $z\in S_{n+1}(a)S_{n+1}(b)$. Since $z$ was chosen arbitrarily in $S_{n+1}(ab)$, it follows that
\[
S_{n+1}(ab)\subseteq S_{n+1}(a)S_{n+1}(b).
\]
\end{proof}

By Theorem \ref{thmx:ZwrG-SelfSimilar}, it remains only to prove contractivity. Let $T_i = \{t_{i1}, t_{i2},...,t_{im_i}\}$ be the transversal of $H_i$ in $G$ given in the hypothesis. Since $B$ is finite, write $B=\{a_1,\ldots,a_n\}$. For each $i$, the set
\[
T_{\mathcal{H}_i}
=
\left\{
a_r^{K}t_{ij}
\;\middle|\;
1\leq r\leq n,\;
1\leq j\leq m_i
\right\},
\]
is a transversal of $\mathcal{H}_i$ in $\mathcal G$. We shall prove that, for every element $y \in \mathcal{G}$, there exists a finite subset $S_{\mathcal{G}}\subset \mathcal{G}$ and an integer $N\in\mathbb{N}$ such that
\[
\overline{S}_n(y)\subset S_{\mathcal{G}},
~\forall~n\ge N.
\]

Fix $i\in\{1,\ldots,s\}$. To simplify the notation, we omit the index $i$ from the transversal. Let $\theta:G\times T_H\longrightarrow H$ denote the Schreier map associated with the original self-similar representation of $G$, and let $\overline{\theta}:\mathcal{G}\times T_{\mathcal{H}}\longrightarrow \mathcal{H}$ denote the Schreier map associated with the self-similar representation of $\mathcal{G}$. Observe that these are distinct maps. The strategy of the proof is to describe the values of $\overline{\theta}$ in terms of the values of $\theta$, thereby allowing us to transfer the contracting property of $G$ to $\mathcal{G}$. We begin by dividing the proof into two main cases, from which the general case follows immediately.

\medskip \hspace{-0.45cm}\textit{Case $1$}: First, consider an element of the base subgroup of $\mathcal{G}$ of the form
\[
w_1=a_1^{Kg_1}\cdots a_l^{Kg_l}.
\]
We compute
\[
\overline{\theta}
\!\left(
w_1,
\,a_r^{K}t_j
\right).
\]
By the definition of the Schreier map associated with $\mathcal{G}$,
$$\overline{\theta}
\!\left(
w_1,
\,a_r^{K}t_j
\right) = a_r^{K}t_jw_1t_j^{-1}a_d^{-K}$$
where $a_d$ is determined by the condition
\[
\mathcal{H}\,a_r^{K}t_jw_1
=
\mathcal{H}\,a_d^{K}t_j
\iff
a_d=a_ra_1\cdots a_l.
\]
Hence,
\begin{align*}
\overline{\theta}
\!\left(
w_1,
\,a_r^{K}t_j
\right)
&=
a_r^{K}t_j\,
a_1^{Kg_1}\cdots a_l^{Kg_l}\,
t_j^{-1}a_d^{-K}
\\
&=
t_j\,
a_1^{Kg_1}\cdots
a_l^{Kg_l}\,
t_j^{-1}
a_1^{-K}\cdots a_l^{-K}.
\end{align*}
By inserting intermediate factors of the form $t_{j^{\sigma(g_\lambda)}}^{-1}t_{j^{\sigma(g_\lambda)}}$ for $1 \leq \lambda \leq l$, together with the factor $t^{-1}_jt_j$, we obtain
\begin{align*}
   = &\ a_1^{K t^{-1}_{j^{\sigma(g_1)}}
\theta(g_1,t_j)^{-1}} a_2^{K t^{-1}_{j\sigma(g_2)}
\theta(g_2,t_j)^{-1}}
\cdots a_l^{K t^{-1}_{j\sigma(g_l)}
\theta(g_l,t_j)^{-1}}
a_1^{-K}\cdots a_l^{-K}\\
= & [
a_1^{K}, \,t^{-1}_{j^{\sigma(g_1)}}\theta(g_1,t_j)^{-1}]
[
a_2^{K}, \, t^{-1}_{j^{\sigma(g_2)}}\theta(g_2,t_j)^{-1}]
\cdots
[
a_l^{K}, \, t^{-1}_{j^{\sigma(g_l)}}\theta(g_l,t_j)^{-1}
]. 
\end{align*}
Applying $\rho$, without loss of generality, assume that

\[
\overline{\theta}\!\left(
w_1,
a_r^{K}t_j
\right)^{\rho}
=
\begin{cases}
\left(a_1...a_l\right)^{-K},
&
\hspace{-1cm} \text{if }
t_{j^{\sigma(g_\lambda)}} \notin KH, \forall \lambda,
\\[2mm]
[a_1^K, \left(\theta(g_1,t_j)^f\right)^{-1}]
\cdots
[a_l^K, \left(\theta(g_l,t_j)^f\right)^{-1}],
&
\text{if }
t_{j^{\sigma(g_\lambda)}} \in KH, \forall \lambda.
\end{cases}
\]
The cases in which the elements $t_{j^{\sigma(g_\lambda)}}$, for $1 \leq \lambda \leq l$, as well as the intermediate cases in which some of these elements belong to $KH$ while others do not, are treated analogously. 

Writing $\theta(g_1,t_j)^f=x_1, \ldots, \theta(g_l,t_j)^f=x_l$, it follows that
\[
\overline{\theta}\!\left(
w_1,
a_r^{K}t_j
\right)^{\rho}
=
\begin{cases}
\left(a_1...a_l\right)^{-K},
&
\text{if }
t_{j^{\sigma(g_\lambda)}} \notin KH, \forall \lambda,
\\
a_1^{Kx_1^{-1}}a_2^{Kx_2^{-1}}\cdots a_l^{Kx_l^{-1}}
a_1^{-K}\cdots a_l^{-K}
=
w_2,
&
\text{if }
t_{j^{\sigma(g_\lambda)}} \in KH, \forall \lambda.
\end{cases}
\]
Applying the same procedure to $w_2$ and using the properties of the Schreier map $\theta$, we conclude that
\begin{align*}
\overline{\theta}\!\left(
w_2,
a_r^{K}t_k
\right)^{\rho}
=&
\begin{cases}
\left(a_1...a_l\right)^{-K},
&
\text{if }
t_{k^{\sigma(g_\lambda)}} \notin KH, \forall \lambda,
\\[2mm]
[a_1^K, \left(\theta(x_1^{-1},t_k)^{-1}\right)^f]
\cdots
[a_l^K, \,\left(\theta(x_l^{-1},t_k)^{-1}\right)^f],
&
\text{if }
t_{k^{\sigma(g_\lambda)}} \in KH, \forall \lambda.
\end{cases}
\\
=&
\begin{cases}
\left(a_1...a_l\right)^{-K},
&
\text{if }
t_{k^{\sigma(g_\lambda)}} \notin KH, \forall \lambda,
\\[2mm]
[a_1^K, \,\theta(x_1,t_{k^{\sigma(x_1^{-1})}})^f]
\cdots
[a_l^K, \,\theta(x_l,t_{k^{\sigma(x_l^{-1})}})^f],
&
\text{if }
t_{k^{\sigma(g_\lambda)}} \in KH, \forall \lambda.
\end{cases}
\end{align*}
Iterating this procedure, we see that $\overline{S}_n(w_1)$ depends only on the sets $S_n(g_1),..., S_n(g_l)$.

\medskip Since $G$ is contracting, there exist a finite subset $S_G\subset G$ and integers $n_1,\ldots,n_l\in\mathbb{N}$ such that
\[
S_n(g_\lambda)\subset S_G,
~\forall~n>n_\lambda,
\]
where $\lambda \in \{1, ..., l\}$. Setting
\[
N_1=\text{max}\{n_1,\ldots,n_l\}.
\]
we obtain, $S_n(g_\lambda) \subset S_G$, for every $\lambda \in \{1, \dots, l\}$ and every $n>N_1$. Hence, we conclude that
\[
\overline{S}_n(w_1)\subset B^{S_G},
~\forall~n>N_1.
\]

\medskip \hspace{-0.45cm}\textit{Case $2$}: Now consider an element of the top subgroup $G$ embedded in $\mathcal{G}$ of the form $$1_{\mathcal{B}}g \  \text{with}  \ g \in G.$$ 
We compute $\overline{\theta}
\!\left(
g,
\,a_r^{K}t_j
\right)$. By the definition of $\overline{\theta}$, we obtain
$$\overline{\theta}
\!\left(
g,
\,a_r^{K}t_j
\right) = a_r^{K}t_jgt_{j^{\sigma(g)}}^{-1}a_r^{-K}$$
where the above equality is determined by the condition 
\[
\mathcal{H}\,a_r^{K}t_jg
=
\mathcal{H}\,a_d^{K}t_i
\iff
a_d= a_r \ \ \text{e} \ \ i = j^{\sigma(g)}.
\]
Hence,
\begin{align*}
\overline{\theta}(g,a_r^{K}t_j)
&=
a_r^{K}t_jg\,t_{j^{\sigma(g)}}^{-1}a_r^{-K}
\\
&=
a_r^{K}\theta(g,t_j)a_r^{-K}
\\
&=
\theta(g,t_j)\,[a_r^{K},\theta(g,t_j)].
\end{align*}
Applying $\rho$, we obtain
\[
\overline{\theta}(g,a_r^{K}t_j)^{\rho}
=
\theta(g,t_j)^{f}\,[a_r^{K},\theta(g,t_j)^{f}].
\]
Writing $x=\theta(g,t_j)^{f}$, we have
\[
\overline{\theta}(g,a_r^{K}t_j)^{\rho}
=
x\,[a_r^{K},x]
=
a_r^{K}xa_r^{-K}.
\]

Applying $\overline{\theta}$, once again, we obtain
\[\overline{\theta}(a_r^{K}xa_r^{-K},a_d^{K}t_k) = a_d^{K}t_k
a_r^{K}xa_r^{-K}
t_{k^{\sigma(x)}}^{-1}
a_d^{-K}\]
where the above equality is determined by the condition
\[
H\,a_d^{K}t_k\,a_r^{K}xa_r^{-K}
=
H\,a_l^{K}t_i
\iff
a_l= a_d
\quad\text{and}\quad
i={k^{\sigma(x)}}.
\]
Hence,
\begin{align*}
\overline{\theta}(a_r^{K}xa_r^{-K},a_d^{K}t_k)
&=
a_d^{K}t_k
a_r^{K}xa_r^{-K}
t_{k^{\sigma(x)}}^{-1}
a_d^{-K}
\\
&=
a_d^{K}a_r^{Kt_k^{-1}}
t_kx
t_{k^{\sigma(x)}}^{-1}
a_r^{-K t^{-1}_{k\sigma(x)}}
a_d^{-K}
\\
&=
a_d^{K}a_r^{Kt_k^{-1}}
\theta(x,t_k)
a_r^{-K t^{-1}_{k^{\sigma(x)}}}
a_d^{-K}
\\
&=
\theta(x,t_k)\,a_d^{K\theta(x,t_k)}a_r^{K t^{-1}_k\theta(x,t_k)}
a_r^{{-K t^{-1}_{k^{\sigma(x)}}}}\,
a_d^{-K}
\\
&=
\theta(x,t_k)[a_d^{K}, \theta(x,t_k)]
[a_r^{K}, t^{-1}_{k}\theta(x,t_k)]
[a_r^{-K}, t^{-1}_{k^{\sigma(x)}}]
[a_d^{-K}, 1]
.
\end{align*}
Applying $\rho$, observe that
\[
\overline{\theta}(a_r^{K}xa_r^{-K},a_d^{K}t_k)^{\rho}
=
\begin{cases}
\theta(x,t_k)^{f}\,[a_d^{K},\theta(x,t_k)^{f}],
&
\text{if } t_k\notin KH
\text{ and }
t_{k^{\sigma(x)}}\notin KH,
\\[2mm]
\theta(x,t_k)^{f}\,[a_d^{K}, \theta(x,t_k)^{f}]a_r^{-K},
&
\text{if } t_k\notin KH
\text{ and }
t_{k^{\sigma(x)}} \in KH,
\\[2mm]
\theta(x,t_k)^{f}\,[(a_da_r)^{K}, \theta(x,t_k)^{f}]a_r^{K},
&
\text{if } t_k\in KH
\text{ and }
t_{k^{\sigma(x)}} \notin KH,
\\[2mm]
\theta(x,t_k)^{f}\,[(a_da_r)^{K}, \theta(x,t_k)^{f}],
&
\text{if } t_k\in KH
\text{ and }
t_{k^{\sigma(x)}} \in KH.
\end{cases}
\]
Iterating this procedure, we see that $\overline{S}_n(g)$ depends only on $S_n(g)$. 

\medskip Since $G$ is contracting, there exists $N_2$ such that $S_n(g) \subseteq S_{G}$ for all $n \geq N_2$. Hence,
$$\overline{S}_n(g) \subset {S_G}^{B}B^{S_G} \qquad \forall\, n>N_2.$$

Finally, let
$$y=a_1^{Kg_1}\dots a_l^{Kg_l}g $$
be an arbitrary element of $\mathcal{G}$. By Lemma \ref{Sndeproduto}, the following inclusion holds:
$$\overline{S}_n(y) \subseteq \overline{S}_n(a_1^{Kg_1}\dots a_l^{Kg_l})\overline{S}_n(g).$$
The conclusions of Cases $1$ and $2$ imply that there exist integers $N_1, N_2 \in \mathbb{N}$ such that 
$$\overline{S}_n(a_1^{Kg_1}\dots a_l^{Kg_l}) \subset B^{S_G},$$
and
$$\overline{S}_n(g) \subset {S_G}^{B}B^{S_G},$$
for all sufficiently large $n$. Setting $N=\text{max}\{N_1, N_2\}$, we obtain
$$\overline{S}_n(y) \subseteq B^{S_G}{S_G}^{B}B^{S_G}$$
for every $n \geq N$. Taking $S_\mathcal{G} = B^{S_G}{S_G}^{B}B^{S_G}$, we conclude that $\mathcal{G}$ is contracting.

\(\hfill\square\)

\begin{ex} \label{4.8}
    The group \(C_{2}\wr(C_{2}\wr \mathbb{Z})\) is a contracting self-similar group. 
    
    Consider
    $$C_{2}\wr(C_{2}\wr \mathbb{Z}) \simeq \langle a \rangle \wr (\langle b \rangle \wr \langle x \rangle ).$$
    Let $G$ be the group $\langle b \rangle \wr \langle x \rangle$. So $H = \langle [b, x^2] \rangle^{^{\langle x \rangle}} \langle x^2 \rangle$ has finite index in $G$. The homomorphism $f: H \rightarrow G$ that extends the map $[b, x^2] \mapsto [b, x]$, $x^2 \mapsto x$ is a simple virtual endomorphism that induces a contracting strongly self-similar representation of $G$. As $f$ is a monomorphism, by Theorem \ref{thmx:ZwrG-SelfSimilar} and \ref{thm:BGcontratil}, the group $\langle a \rangle \wr G$ is a contracting self-similar group.
\end{ex}
\section{Proof of Theorem \ref{thmx:SRN-FinitePresentation}}\label{Section:ProvaUltimosTeoremas}

Let \(G\leq\Am\) be a non-torsion contracting self-similar group satisfying
the hypotheses of Theorem \ref{thmx:ZwrG-SelfSimilar}.
We cannot guarantee that the group \(\mathbb{Z}^{d} \wr_X G\), where \(X=K\setminus G\), is finitely presented
(see \cite[Theorem~1.1]{Cornulier2006}). However, we prove that \(V_m(\mathbb{Z}^{d}\wr_{X} G)\) is
finitely presented. We also prove that we can embed the permutational wreath product
\(\mathbb{Z}^{d}\wr_X G\) in a finitely presented simple group. Part of the results in this section
are drawn from \cite[Section 9]{Nek2004} and \cite[Subsection 5.5]{Nek2017} and we
reproduce them here with explicit references for convenience of the reader.

The idea is to define a set which is an ``extension'' of the nucleus of \(G\).
We call this set the \emph{extended nucleus of \(\mathbb{Z}^{d}\wr_{X} G\)}.
It is sufficient to prove both parts of Theorem
\ref{thmx:SRN-FinitePresentation} for the group \(\mathbb{Z}\wr_X G\). This is
convenience, since it simplify the arguments. In the end of Subsubsections
\ref{subsubsec:finitepresentation} and
\ref{subsubsec:finiteabelizanization}, we explain the necessary adaptation.

\subsection{Setup and notation}

We begin by recalling a standard fact about the Higman--Thompson group \(V_m\).

\begin{lemma}\cite[Lemma 9.12]{Nek2004}\label{Lema3.1-Nekra}
 Let \(\mathcal{C}_{1}=\{C_{v_{1}},\ldots,C_{v_{n}}\}\) and \(\mathcal{C}_{2}=\{C_{u_{1}},\ldots,C_{u_{n}}\}\) be finite partitions of \(\C_{m}\) into disjoint cones.
 Let \(\alpha\colon \mathcal{C}_{1}\to \mathcal{C}_{2}\) a bijection. Then there exists
 \(g\in V_{m}\) such that \((vw)g=(v)^{\alpha}w\), for all \(v\in \mathcal{C}_{1}\)
 and \(w\in Y^{\omega}\).
\end{lemma}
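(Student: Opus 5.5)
The statement concerns only partitions of $\C_m$ into cones, so the proof reduces to two facts. First, an element of $V_m$ that maps one cone onto another cone by a prefix replacement can be built explicitly. Second, any cone partition has a size constrained modulo $m-1$.

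The key observation is that a partition of $\C_m$ into $n$ cones corresponds to a finite complete prefix code $\{v_1,\dots,v_n\}\subset Y^*$. This is the set of leaves of a finite rooted $m$-ary subtree $\mathcal{T}_1$ of $\T_m$ in which every internal vertex has all $m$ children. Such a tree with $k$ internal vertices has $n=1+k(m-1)$ leaves. Hence $n\equiv 1 \pmod{m-1}$, which is consistent with the hypothesis that $\mathcal{C}_1$ and $\mathcal{C}_2$ both have $n$ cones.

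To construct $g$, define a map $g\colon \C_m\to\C_m$ piecewise. Each point $\xi\in\C_m$ lies in exactly one cone $C_{v}\in\mathcal{C}_1$, because $\mathcal{C}_1$ is a partition. Write $\xi=vw$ with $w\in Y^\omega$ uniquely determined, and set $(\xi)g=(v)^{\alpha}w$. Here $(v)^\alpha$ denotes the prefix $u$ with $C_u=\alpha(C_v)$.

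This map is a bijection. The inverse is defined in the same way using the partition $\mathcal{C}_2$ and the bijection $\alpha^{-1}$, and the images $C_{(v)^\alpha}$ are pairwise disjoint and cover $\C_m$.

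It is also a homeomorphism. On each clopen set $C_v$, the map is the prefix replacement $vw\mapsto uw$, which is continuous. Since there are finitely many clopen pieces, $g$ is continuous, and the same argument applies to its inverse.

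Finally, $g\in V_m$ follows directly from the table description in Subsection~\ref{Subsec:SRN-groups}. The table is
\[
g=\begin{pmatrix} v_1 & \cdots & v_n\\ 1 & \cdots & 1\\ (v_1)^\alpha & \cdots & (v_n)^\alpha\end{pmatrix},
\]
with every group label trivial. This is precisely the standard definition of elements of the Higman--Thompson group $V_m$.

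The only step requiring any care is the identification of the definition of $V_m$ used here with homeomorphisms given by a pair of cone partitions of equal size together with a bijection between them. If $V_m$ is instead defined by generators, I would show that any such prefix-replacement homeomorphism factors as a product of the standard generators. I would do this by expanding both subtrees until they coincide with a common refinement, and then realizing the resulting permutation of leaves. This is the main potential obstacle, but it is standard.
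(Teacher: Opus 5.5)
Your proposal is correct and takes the same route as the source: with the paper's table description of $V_m(G)$ specialized to the trivial group, the piecewise prefix-replacement map $v w\mapsto (v)^{\alpha}w$ lies in $V_m$ by definition, and that is all Nekrashevych's Lemma 9.12 amounts to. The paper itself gives no proof beyond the citation. Your remark that $n\equiv 1 \pmod{m-1}$ is harmless but not needed, since both partitions having $n$ cones is already part of the hypothesis.
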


Next, we recall the following results, due to Nekrashevych.
\begin{theorem}\cite[Theorem 4.7]{Nek2017}\label{thm:Nekra-SimpleCommutator}
 For every self-similar group \(G\leq\Am\), the commutator subgroup \([V_{m}(G),V_{m}(G)]\) is simple.
\end{theorem}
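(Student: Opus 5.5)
The plan is the argument of \cite[Theorem 4.7]{Nek2017}, adapted to the SRN-group \(V_m(G)\). Write \(\Gamma=[V_m(G),V_m(G)]\) and let \(N\trianglelefteq \Gamma\) be a nontrivial normal subgroup. The goal is \(N=\Gamma\), and it is reached in three steps.

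\textbf{Step 1.} We show that \(N\) contains a nontrivial element supported in a small cone. Pick \(1\neq f\in N\) and a cone \(C_v\) with \((C_v)f\cap C_v=\emptyset\); such a cone exists because \(f\) moves some point and the action is continuous. Take any \(g\in\Gamma\) supported in \(C_v\). Then the commutator \([f,g]\in N\) is supported in \(C_v\cup (C_v)f\), and it acts there as \(g^{-1}\) on \(C_v\) and as a conjugate of \(g\) on \((C_v)f\). Choosing \(g\) to be a nontrivial commutator of elements of \(V_m\) supported in \(C_v\), which is possible by Lemma~\ref{Lema3.1-Nekra}, we obtain a nontrivial element of \(N\) supported in a proper clopen set.

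\textbf{Step 2.} We use the standard double-commutator trick of Epstein and Higman type. Any element \([a,b]\) with \(a,b\) supported in a cone disjoint from its image under \(h\in N\) lies in \(N\); this follows from the identity \([a,[h,b]]=[a,b]\) when the support of \(a\) is disjoint from that of \(h^{-1}bh\). Together with the transitivity of \(V_m\) on pairs of proper cones (Lemma~\ref{Lema3.1-Nekra}), conjugating inside \(\Gamma\) shows that \(N\) contains \([a,b]\) for all \(a,b\in V_m(G)\) supported in any sufficiently small cone.

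\textbf{Step 3.} We show that \(\Gamma\) is generated by commutators of elements with small support. Every element of \(V_m(G)\) is a product of elements supported in proper cones: write it using the table form, then split \(\C_m\) into cones and factor using \(V_m\) together with the local copies of \(G\) acting on a cone \(C_u\) via \(uw\mapsto u(w)^{\alpha}\). A commutator \([x,y]\) of such products can then be rewritten, modulo conjugation, as a product of commutators of small-support elements.

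The main obstacle is Step~3. The standard way around it is to show that \(\Gamma\) is generated by commutators of pairs of elements supported in a fixed proper cone. For this we use that \(V_m(G)/\langle\text{such commutators}\rangle\) is abelian, which requires verifying that any two small-support elements commute modulo this subgroup after being moved by \(V_m\) to disjoint supports. Self-similarity of \(G\) is used exactly here: it guarantees that restricting an element to subcones keeps it in \(V_m(G)\), so the local pieces remain in the group.
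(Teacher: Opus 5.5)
The paper gives no argument for this statement; it is quoted from \cite[Theorem 4.7]{Nek2017}, so your outline has to stand on its own. Its overall shape is the standard one: an Epstein--Higman double-commutator argument, followed by generation of $\Gamma$ by commutators of small-support elements. As written, however, it has two genuine gaps.

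The first gap is that you treat $N$ as normal in $V_m(G)$, whereas it is only normal in $\Gamma=[V_m(G),V_m(G)]$. Since $[h,b]=h^{-1}h^{b}$, the identity $[a,[h,b]]=[a,b]$ puts $[a,b]$ in $N$ only when $b$, and then $a$, normalize $N$. So Step~2 proves $[a,b]\in N$ for $a,b\in\Gamma$ supported in $U$, not for all $a,b\in V_m(G)$ as you state. Likewise, the conjugations that move a small cone into $U$ must be done inside $\Gamma$. The elements of $V_m$ supplied by Lemma~\ref{Lema3.1-Nekra} need not lie in $\Gamma$: for $G$ trivial and $m$ odd, $\Gamma=[V_m,V_m]$ has index $2$ in $V_m$. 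Two standard devices are missing.
\begin{enumerate}
\item Doubling. Suppose $a,b\in V_m(G)$ are supported in a clopen set $W$, and $t_1,t_2\in V_m(G)$ make $W$, $Wt_1$, $Wt_2$ pairwise disjoint. Then $a'=[a^{-1},t_1]=a\,(a^{-1})^{t_1}$ and $b'=[b^{-1},t_2]=b\,(b^{-1})^{t_2}$ lie in $\Gamma$, and $[a',b']=[a,b]$, because the added factors have supports disjoint from everything else.
\item Small coset representatives. Every coset of $\Gamma$ in $V_m(G)$ has a representative $k$ supported in a prescribed proper cone $C_z$. To see this, use that $V_m(G)$ is generated by $V_m$ and the $L_u(g)$, that $L_u(g)$ and $L_{u'}(g)$ are conjugate under $V_m$ for non-empty $u,u'$, and Higman's description of $V_m/[V_m,V_m]$. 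Hence if $n\in N$ has support disjoint from $C_z$ and $g\in V_m(G)$, choosing $k$ with $kg\in\Gamma$ gives $n^{g}=n^{kg}\in N$.
\end{enumerate}
The second device also lets you shrink the support of a commutator before applying the first one, which needs room.

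The second gap is that the opening claim of Step~3 is false. An element supported in a proper cone $C_u$ maps the first-level cone containing $C_u$ onto itself. So every product of such elements preserves each $C_y$ with $y\in Y$, and an element of $V_m$ exchanging $C_1$ and $C_2$ is not such a product. For the same reason, $\Gamma$ is not generated by commutators of pairs supported in a fixed proper cone, since these only generate elements supported in that cone. Only their normal closure in $V_m(G)$ can be all of $\Gamma$, which is why the second device above is indispensable.

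The right notion of small support is ``contained in a proper clopen set''. With it, Step~3 still has to be proved, not just announced; you need two facts.
\begin{enumerate}
\item $V_m(G)$ is generated by elements of small support. Self-similarity enters when you split $g\in G$ into its first-level permutation, which is an element of $V_m$, and the local pieces $L_y(g|_y)$. You also need the analogous fact for $V_m$ itself.
\item $x^{-1}x^{t}=[x,t]$ lies in the subgroup $S$ generated by small-support commutators whenever $x$ and $t$ are jointly supported in a proper clopen set. This is what ensures that moving supports apart does not change classes modulo $S$, so that $V_m(G)/S$ is abelian.
\end{enumerate}
These facts are true and standard, but they are where the actual work lies, and your proposal only names them.
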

\begin{theorem}\cite[Theorem 5.9]{Nek2017}\label{thm:Nekra-FinitelyPresentation}
 For every contracting self-similar group \(G\leq\Am\), \(V_{m}(G)\) is finitely presented.
\end{theorem}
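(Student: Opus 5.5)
The plan is to write down an explicit finite presentation of \(V_{m}(G)\), following the strategy of \cite[Section 5]{Nek2017}, and to use contraction to show that every relation of \(V_{m}(G)\) is a consequence of finitely many. The generators are a finite generating set of \(V_{m}\) (finitely presented by Higman \cite{Higman1974}), together with, for each element \(g\) of the nucleus \(\mathcal{N}=\mathcal{N}(G)\), the element \(g_{\emptyset}\) of \(V_m(G)\) acting as \(g\) on the whole Cantor set. By Lemma \ref{Lema3.1-Nekra}, for a cone \(C_v\) we can conjugate by elements of \(V_m\) to get "\(g\) acting on \(C_v\)", written \(g_v\), acting as \(g\) on \(C_v\) and as the identity elsewhere. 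Since every \(\alpha\in G\) has all sections at some level lying in \(\mathcal{N}\), every table element of \(V_m(G)\) is a product of elements of \(V_m\) and elements \(g_v\) with \(g\in\mathcal N\). So this set generates.

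The relations I would impose are the following.
\begin{enumerate}
\item[(R1)] A finite presentation of \(V_m\).
\item[(R2)] Splitting relations: \(g_{\emptyset}=\bigl(\prod_{y\in Y}(g|_y)_y\bigr)\pi_g\) for \(g\in\mathcal N\), where \(\pi_g\in V_m\) permutes the cones \(C_y\) according to \(\sigma(g)\). These make sense since \(\mathcal{N}\) is state-closed.
\item[(R3)] Conjugation relations expressing how \(V_m\) moves \(g_v\) to \(g_{u}\), and commutation of \(g_v,h_u\) for incomparable \(u,v\). Only finitely many are needed, by the usual Higman/Brin-type argument that conjugates by \(V_m\).
\item[(R4)] Multiplication relations \(g_{\emptyset}h_{\emptyset}=k_{\emptyset}\) whenever \(g,h,k\in\mathcal N\) and \(gh=k\) in \(G\), including \(1_{\emptyset}=1\).
\end{enumerate}
All four families are finite, and all hold in \(V_m(G)\).

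Let \(\Gamma\) be the group so presented; we show \(\Gamma\to V_m(G)\) is injective. Using (R1) to (R3), any word in \(\Gamma\) can be brought to a normal form: an element of \(V_m\) times, for a finite complete prefix code \(\{v_1,\dots,v_r\}\), words \(w_i\) in \(\mathcal N\) placed on each cone \(C_{v_i}\). If the word is trivial in \(V_m(G)\), then after absorbing the \(V_m\)-part (R1) it suffices to show that each \(w_i\), a word in \(\mathcal{N}\) representing the identity of \(G\), is trivial in \(\Gamma\) when placed on a cone.

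The main obstacle is this last step: killing words \(w=g_1g_2\cdots g_L\) in \(\mathcal N\) with \(w=1\) in \(G\), without any presentation of \(G\) itself. Here contraction does the work. Fix \(n\) such that for all \(g,h\in\mathcal N\), all sections of \(gh\) at level \(n\) lie in \(\mathcal N\); such \(n\) exists because \(\mathcal N^2\) is finite. Apply (R2) \(n\) times to each letter to push \(w\) down to level \(n\). On each vertex \(u\) of level \(n\) we get a word of length \(L\) in \(\mathcal N\), and its consecutive pairs are sections of products \(g_{2j-1}g_{2j}\). By (R4), together with the splitting relations used compatibly (the sections of \(gh\) are products of sections of \(g\) and \(h\), in the spirit of Lemma \ref{Sndeproduto}), each pair can be replaced by a single nucleus element, so the length becomes at most \(\lceil L/2\rceil\). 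Iterating roughly \(\log_2 L\) times, each vertex at a finite level carries a single nucleus letter. That letter is trivial in \(G\), since \(w=1\), so (R4) with \(1_\emptyset=1\) removes it. The delicate point is checking carefully that the pairing reduction really follows from (R2) and (R4), including the permutation parts \(\pi_g\) moving letters between vertices. I would handle this by first using (R3) to gather all permutations to the right before pairing.
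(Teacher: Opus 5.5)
The genuine gap is in your generating set. You take \(g_\emptyset\), which acts as \(g\) on all of \(\C_m\), and assert that conjugating it by elements of \(V_m\) yields \(g_v\) supported on \(C_v\). That is false. For \(f\in V_m\), the support of \(f^{-1}g_\emptyset f\) is the image under \(f\) of the support of \(g_\emptyset\). So when \(g\) moves every point, no conjugate of \(g_\emptyset\) lies in a proper cone. Conjugation by the elements of Lemma \ref{Lema3.1-Nekra} moves \(L_u(g)\) to \(L_v(g)\) only for non-empty \(u,v\) (Proposition \ref{prop:Propriedades-L}(3)).

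In fact your set need not generate \(V_m(G)\) at all. Take \(m=2\), \(Y=\{1,2\}\), and \(G=\{1,g\}\) with \(g=(g,g)\sigma\), where \(\sigma\) is the transposition; this group is contracting with nucleus \(\{1,g\}\). Conjugating a table with trivial states by \(g_\emptyset\) gives a table whose states are \((g|_{v_i})^{-1}g|_{u_i}=1\). Hence \(g_\emptyset\) normalizes \(V_2\), and \(\langle V_2,g_\emptyset\rangle=V_2\cup V_2g_\emptyset\). But \(L(g)\) lies in neither coset. On \(C_1\), \(L(g)\) acts by \(1z\mapsto 1z^{g}\); on \(C_2\), \(L(g)g_\emptyset^{-1}\) acts by \(2z\mapsto 1z^{g}\). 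Since \(z\mapsto z^g\) fixes no point, neither map is a prefix replacement on any subcone. Consequently (R2), (R3), the transport of (R4) to cones, and your normal form all refer to elements \(g_v\) that are not words in your generators.

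The repair is the choice made in the paper: take as generators \(L(g)\), \(g\in\N(G)\), supported on the first-level cone \(C_1\). Then each \(L_v(g)\) with \(v\) non-empty is a \(V_m\)-conjugate of \(L(g)\). The element \(g_\emptyset=\prod_{y}L_y(g|_y)\pi_g\) becomes a consequence rather than a generator, and your families turn into the paper's relations \textbf{(S)}, \textbf{(N)}, \textbf{(C)}.

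You should also justify why finitely many conjugation relations suffice in (R3). The reason is that the pointwise stabilizer of \(C_1\) in \(V_m\) is finitely generated. So the relations \([L(g),h]=1\) for \(h\) in a finite generating set \(W_1\), together with the commutation of \(L_x(g_1),L_y(g_2)\) for \(x\neq y\) and of \(\overline{L}_{v_1}(g_1),\overline{L}_{v_2}(g_2)\) for distinct \(v_1,v_2\in Y^2\), yield Lemmas \ref{lem:Lema5.12-Nekra} and \ref{lem:Lema5.13-Nekra}.

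With these changes the rest of your plan is the paper's argument, and your halving step is sound. After splitting to level \(n\) below a cone and collecting permutations on the right, the word at a vertex \(u\) is \(g_1|_u\,g_2|_{u^{g_1}}\,g_3|_{u^{g_1g_2}}\cdots\). Each consecutive pair is a section \((g_{2j-1}g_{2j})|_{u'}\in\N(G)\), which the nucleus relations transported to that cone collapse to one letter. This is exactly the step needed to pass from \(\prod g_i=1\) in \(G\) to triviality in \(\Gamma\), which the paper's adaptation states without detail.
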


Since \(G \leq \Am\) is a contracting self-similar group, it admits a nucleus \(\N(G)\). We define the
\emph{extended nucleus} of \(\ZwrG\) as the following finite set
\[\N\coloneqq \N(G)\cup Q(\gamma),\]
where \(\gamma\in \Am\) represents the generator of the \(\mathbb{Z}\)-factor as an automorphism of \(\T_{m}\). This set is finite, since $G$ is a contracting self-similar group. Then, we may assume \(\N\) as a
generating set of \(\ZwrG\), since replacing \(\ZwrG\) by \(\langle\N\rangle\) does not alter \(V_{m}(\ZwrG)\).

For \(v\in Y^{*}\) and \(g\in V_{m}(\ZwrG)\), let us define
\[(w)L_{v}(g)=\begin{cases}
 v(u)^{g},~\text{if}~w=vu~\text{for some}~u\in Y^{\omega} \\ w,~\text{otherwise}
\end{cases}.\]

\begin{rem}
 In case of \(v=1\in Y\), for simplicity, throughout this paper, we
 write \(L\) in place of \(L_{1}\), that is, \(L(g)\coloneqq L_{1}(g)\), with \(g\in\N\).
\end{rem}

We recall the following proposition from \cite{Nek2017}.
\begin{proposition}\cite[Proposition 5.10]{Nek2017}\label{prop:Propriedades-L}
 \begin{enumerate}
 \item For every \(v\in Y^{*}\), the map \(L_{v}\colon V_{m}(\ZwrG)\to V_{m}(\ZwrG)\)
is a group monomorphism.
 \item The subgroups \(L_{v}(V_{m}(\ZwrG)),L_{u}(V_{m}(\ZwrG))\le V_{m}(\ZwrG)\) commute if \(v,u\in Y^{*}\) are not comparable.
 \item If \(v,u\in Y^{*}\) are non-empty, then \(f^{-1}L_{v}(g)f=L_{u}(g)\),
for all \(g\in V_{m}(\ZwrG)\), where \(f\in V_{m}(\ZwrG)\) satisfies \((vw)f=uw\), for
every \(w\in Y^{\omega}\).
 \end{enumerate}
\end{proposition}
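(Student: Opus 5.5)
We will verify the three items directly from the table description of elements of \(V_{m}(\ZwrG)\) given in Subsection~\ref{Subsec:SRN-groups}, using right actions throughout.

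\textbf{Item (1).} The first step is to check that \(L_{v}(g)\in V_{m}(\ZwrG)\).
\begin{enumerate}
\item[(a)] Let \(g\) be given by the table with columns \((v_{i},\alpha_{i},u_{i})\), \(1\le i\le n\), where \(\alpha_{i}\in\ZwrG\) and \(\{C_{v_i}\}\), \(\{C_{u_i}\}\) are partitions of \(\C_{m}\).
\item[(b)] Then \(\{C_{vv_i}\}\) and \(\{C_{vu_i}\}\) are both partitions of \(C_{v}\).
\item[(c)] The complement \(\C_{m}\setminus C_{v}\) is a finite disjoint union of cones \(C_{w_1},\dots,C_{w_k}\), namely the siblings of the prefixes of \(v\).
\item[(d)] Hence \(L_{v}(g)\) is represented by the table with columns \((vv_i,\alpha_i,vu_i)\) together with the columns \((w_j,1,w_j)\). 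Its defining formula is exactly the one in the definition of \(L_v\).
\end{enumerate}

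Next, \(L_v\) is a homomorphism. Both \(L_{v}(gh)\) and \(L_{v}(g)L_{v}(h)\) act as the identity off \(C_{v}\). On \(C_v\) we have
\[
(vu)L_{v}(g)L_{v}(h)=\bigl(v(u)^{g}\bigr)L_{v}(h)=v(u)^{gh}=(vu)L_v(gh).
\]

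Finally, \(L_v\) is injective. If \(L_{v}(g)=1\), then \(v(u)^{g}=vu\) for every \(u\in Y^{\omega}\), so \(g=1\).

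\textbf{Item (2).} The element \(L_{v}(g)\) is supported in \(C_{v}\), and \(L_{u}(h)\) is supported in \(C_{u}\). Each also preserves its cone setwise. If \(v\) and \(u\) are incomparable, then \(C_{v}\cap C_{u}=\emptyset\). Homeomorphisms with disjoint supports that preserve those supports commute, so every element of \(L_v(V_m(\ZwrG))\) commutes with every element of \(L_u(V_m(\ZwrG))\).

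\textbf{Item (3).} First we need an \(f\) with \((vw)f=uw\) to exist. For non-empty \(v\), the complement of \(C_{v}\) is a union of \(|v|(m-1)\) cones. Since a cone can be split into \(m\) subcones, the number of pieces can be raised by any multiple of \(m-1\). We therefore refine the partitions of the complements of \(C_v\) and \(C_u\) until they have equal cardinality. Lemma~\ref{Lema3.1-Nekra} then supplies \(f\in V_{m}\le V_{m}(\ZwrG)\) mapping \(C_{v}\) to \(C_{u}\) by \(vw\mapsto uw\). The identity itself is a direct computation, in two cases.
\begin{enumerate}
\item[(a)] For \(w=uz\), we have \((uz)f^{-1}=vz\). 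Then \((vz)L_{v}(g)=v(z)^{g}\), and applying \(f\) gives \(u(z)^{g}=(uz)L_{u}(g)\).
\item[(b)] For \(w\notin C_{u}\), the point \((w)f^{-1}\) lies outside \(C_{v}\). It is fixed by \(L_{v}(g)\), and applying \(f\) returns \(w\), which agrees with \((w)L_u(g)=w\).
\end{enumerate}

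The only mildly delicate point in the whole argument is the existence of \(f\) in item (3). This is where non-emptiness of \(u\) and \(v\) is used: it guarantees that both complements are non-empty and can be refined to partitions with matching cardinalities. Everything else is routine verification on cones.
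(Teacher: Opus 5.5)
Your proof is correct and is the standard cone-by-cone verification that the paper does not write out: it simply cites Nekrashevych's Proposition~5.10 and says ``the proof is standard.'' The existence argument for \(f\) in item~(3) is not needed, since the statement already quantifies over any such \(f\), but it is correct and does no harm.
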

\begin{proof}
 The proof is standard.
\end{proof}

Finally, since we proved in Theorem \ref{thmx:ZwrG-SelfSimilar} that \(\ZwrG\) is a self-similar group, from Theorem \ref{thm:Nekra-SimpleCommutator} it may be concluded that \([V_{m}(\ZwrG),V_{m}(\ZwrG)]\) is simple. We will prove that \(\ZwrG\) embeds into a finitely presented simple group by combining the simplicity of \([V_{m}(\ZwrG),V_{m}(\ZwrG)]\) with the Kaloujnin--Krasner Theorem
and the following result due to
Zaremsky.
\begin{proposition}\cite[Proposition 2.5]{Zaremsky2025b}\label{prop:Mergulho-Zaremsky}
 Let \(G\le\Am\) be a self-similar group and let \(V_m(G)\) be the associated SRN-group.
 For any finite group \(H\), the wreath product \([V_m(G),V_m(G)]\wr H\) embeds into \([V_m(G),V_m(G)]\).
\end{proposition}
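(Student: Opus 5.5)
The statement immediately above is Zaremsky's Proposition~\ref{prop:Mergulho-Zaremsky}: for a self-similar $G\le\Am$ and any finite group $H$, the wreath product $[V_m(G),V_m(G)]\wr H$ embeds into $[V_m(G),V_m(G)]$. I will write $\Gamma=V_m(G)$ and $\Gamma'=[\Gamma,\Gamma]$. The idea is to realise $|H|$ commuting copies of $\Gamma'$ on disjoint cones, using the maps $L_v$ of Proposition~\ref{prop:Propriedades-L}, and then to realise $H$ as a group of commutators permuting those cones.

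\textbf{Step 1: choose the cones.} Let $n=|H|$. I will pick a partition of $\C_m$ into cones $C_{v_1},\dots,C_{v_n},C_{w_1},\dots,C_{w_k}$ such that the $v_h$ (indexed by $h\in H$) are pairwise incomparable and nonempty. Since the number of cones in a partition of $\C_m$ is congruent to $1$ modulo $m-1$, I enlarge the family as needed by refining. I also want enough leftover cones for Step~3.

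\textbf{Step 2: embed the base group.} Define $\Phi\colon (\Gamma')^n\to\Gamma$ by $(x_h)_h\mapsto\prod_h L_{v_h}(x_h)$. By Proposition~\ref{prop:Propriedades-L}(1) each $L_{v_h}$ is injective, and by part (2) the images commute. The supports $C_{v_h}$ are disjoint, so $\Phi$ is an injective homomorphism. Its image lies in $\Gamma'$, because $L_v$ is a homomorphism and therefore sends commutators to commutators.

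\textbf{Step 3: embed $H$.} Let $H$ act on $\{v_h\}$ by right multiplication. By Lemma~\ref{Lema3.1-Nekra}, each $g\in H$ gives an element $\pi_g\in V_m$ that maps $v_hw\mapsto v_{hg}w$ and fixes the complement. This yields a homomorphism $H\to V_m\le\Gamma$. Moreover $\pi_g^{-1}L_{v_h}(x)\pi_g=L_{v_{hg}}(x)$ by Proposition~\ref{prop:Propriedades-L}(3), applied to a cone permutation restricted in this way. So $\Phi$ intertwines the permutation action of $H$ on coordinates with conjugation by the $\pi_g$. Together these give a homomorphism $\Gamma'\wr H\to\Gamma$. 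It is injective, since $H$ acts faithfully on the disjoint cones while the base image is supported inside each cone without moving cones.

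\textbf{Step 4: land in $\Gamma'$ (main obstacle).} The hard step is forcing the $\pi_g$ into $\Gamma'$, since a permutation of cones need not be a commutator; odd permutations are the problem. My first attempt is to double the construction: use cones $v_h$ and $v'_h$ with $v'_h$ incomparable to all the $v_h$, and let $\pi_g$ act simultaneously on both families. Each $\pi_g$ is then a product of two copies of the same permutation. That makes it even in $V_m$'s natural sense, and it is known to lie in $V_m'$ when $m$ is odd or even as appropriate, since $V_m'$ is the kernel of the sign-type abelianization of $V_m$. The base embedding only uses the $v_h$ copies, so nothing else changes.

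If the abelianization of $\Gamma$ involves $G$-contributions, these do not arise here, because the $\pi_g$ lie in $V_m$. So it suffices to verify that the doubled permutations lie in $[V_m,V_m]$, and this is the point I would check most carefully.
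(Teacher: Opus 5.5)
The paper gives no proof of this proposition. It is quoted from \cite{Zaremsky2025b} and used only as a black box in the proof of Theorem~\ref{thmx:SRN-FinitePresentation}(2), so there is no internal argument to compare against. Your proposal is a correct, self-contained proof of the cited result.

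\begin{itemize}
\item The maps $L_{v_h}$ embed $(\Gamma')^{|H|}$ with pairwise disjoint supports, and the image lies in $\Gamma'$ because each $L_{v_h}$ is a homomorphism.
\item The rigid cone permutations $\pi_g$ realize the right regular action of $H$.
\item By Proposition~\ref{prop:Propriedades-L}(3), $\pi_g$ conjugates $L_{v_h}(x)$ to $L_{v_{hg}}(x)$, which matches the coordinate-permuting action in $\Gamma'\wr H$.
\item Injectivity holds because $\Phi(x)$ preserves every cone $C_{v_h}$, while $\pi_g$ moves them whenever $g\neq 1$. Doubling the permutation onto the cones $C_{v'_h}$ changes none of this.
\end{itemize}

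The step you flagged is fine. It can also be closed without invoking Higman's computation of $V_m^{\mathrm{ab}}$ or splitting by the parity of $m$. Fix the partition $P$ of $\C_m$ containing all the cones $C_{v_h}$ and $C_{v'_h}$. The rigid permutations of the cones of $P$ form a finite subgroup of $V_m$ isomorphic to $\Symm(P)$. Your doubled $\pi_g$ is an even permutation of $P$. Hence
\[
\pi_g\in\mathrm{Alt}(P)=[\Symm(P),\Symm(P)]\le[V_m,V_m]\le\Gamma'.
\]
As an alternative to doubling, you could use the standard embedding of the symmetric group on $n$ letters into the alternating group on $n+2$ letters. This adds a transposition of two spare cones to each odd $\pi_g$ and needs only two extra cones instead of $n$.
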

\noindent
\subsection{Proof of Theorem \ref{thmx:SRN-FinitePresentation} - Item 1}
\subsubsection{Finite presentation}\label{subsubsec:finitepresentation}
Let \(G \leq \Am\) be a contracting self-similar group. The following result is an adaptation of Lemma~5.11 from
\cite{Nek2017} to the permutational wreath product \(\ZwrG\) and it establishes the finite
generation of the associated SRN-group \(V_m(\ZwrG)\). We will fix a finite presentation
\(\langle \mathcal{S} \mid \mathcal{R} \rangle\) for \(V_m\) (see \cite{Higman1974}) and define
\[\mathcal{S}_1\coloneqq \{L(g) \mid g\in\N\}\subseteq V_m(\ZwrG).\]
\begin{lemma}{\cite[Lemma 5.11]{Nek2017}}\label{lemma:SRN-Generation}
 If \(G\leq \Am\) is a contracting self-similar group, then the group \(V_{m}(\ZwrG)\) is generated by the set \(\mathcal{S}\cup \mathcal{S}_{1}\).
\end{lemma}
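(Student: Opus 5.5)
The plan is to follow Nekrashevych's proof of \cite[Lemma 5.11]{Nek2017}, with the extended nucleus \(\N\) playing the role of the nucleus. Let \(\Gamma\coloneqq\langle \mathcal{S}\cup\mathcal{S}_1\rangle\). Since \(V_m\le\Gamma\), Lemma~\ref{Lema3.1-Nekra} lets us move cones around freely. Take an arbitrary \(f\in V_m(\ZwrG)\), written as a table with partitions \(C_{v_1},\dots,C_{v_n}\), \(C_{u_1},\dots,C_{u_n}\) and elements \(\alpha_1,\dots,\alpha_n\in\ZwrG\). Compose \(f\) with an element of \(V_m\) sending each \(C_{u_i}\) to \(C_{v_i}\) by prefix replacement. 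This reduces the problem to showing that every element acting as \(L_{v_1}(\alpha_1)\cdots L_{v_n}(\alpha_n)\) lies in \(\Gamma\), the factors commuting by Proposition~\ref{prop:Propriedades-L}(2). So it suffices to show that \(L_v(\alpha)\in\Gamma\) for every nonempty \(v\in Y^*\) and every \(\alpha\in\ZwrG\).

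First, by Proposition~\ref{prop:Propriedades-L}(3), \(L_v(g)\) is conjugate in \(V_m\) to \(L(g)=L_1(g)\) for every nonempty \(v\). Hence \(L_v(g)\in\Gamma\) for all \(g\in\N\) and all nonempty \(v\). By Proposition~\ref{prop:Propriedades-L}(1), \(L_v\) is a homomorphism, so \(L_v(g)\in\Gamma\) for all \(g\in\langle\N\rangle\). This is all of \(\ZwrG\), since we replaced the group by \(\langle\N\rangle\) (the extended nucleus generates because it contains the nucleus of \(G\), which together with \(Q(\gamma)\) generates after the earlier reduction). This handles the case \(v\neq\emptyset\).

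The remaining issue, and the main obstacle, is the empty word, i.e.\ realizing \(\alpha\) acting on the whole Cantor set. Here we use the self-similar decomposition \(\alpha=(\alpha|_1,\dots,\alpha|_m)\sigma(\alpha)\). As an element of \(V_m(\ZwrG)\), \(\alpha\) equals \(L_1(\alpha|_1)\cdots L_m(\alpha|_m)\) composed with the element of \(V_m\) permuting the cones \(C_y\) according to \(\sigma(\alpha)\). By the previous paragraph each \(L_y(\alpha|_y)\) lies in \(\Gamma\), because the states \(\alpha|_y\) belong to \(\ZwrG\) by self-similarity (Theorem~\ref{thmx:ZwrG-SelfSimilar}). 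So no contraction argument is actually needed for generation: state-closedness plus the fact that \(\N\) generates suffices.

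The only delicate point is the finiteness of \(\N\), which is needed so that \(\mathcal{S}_1\) is finite. This requires checking that \(\gamma\) is finite-state in the representation built in Section~\ref{Section:ProvaThmA}. The states of \(a^{K}\) under \(\rho_i\) and \(\rho\) are either trivial, a conjugate \(a^{Kh^{f_i}}\), or the element \(x\in G\). Iterating these, one shows that the states stay among finitely many elements, using contraction of \(G\) for the \(x\)-part. I would first try to verify this by tracking \(\rho_i\) on \(a^K\) directly: \(a^{K}\mapsto a^{K}\), since \(1\in H_i\) and \(1^{f_i}=1\).
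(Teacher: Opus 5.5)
Your argument is correct and has the same skeleton as the paper's proof. Conjugating $L(g)$ by elements of $V_m$ gives $L_v(g)\in\langle\mathcal{S}\cup\mathcal{S}_1\rangle$ for $g\in\N$ and nonempty $v$, and an arbitrary table is then a product of elements $L_{v_i}(\alpha_i)$ and an element of $V_m$. The difference is in how arbitrary states $\alpha_i$ are handled.

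The paper asserts that every element of $V_m(\ZwrG)$ has a table with all $\alpha_i\in\N$. That is Nekrashevych's contraction step, and it would need $\ZwrG$ itself to be contracting with nucleus inside $\N$. This fails in the present representation: $\gamma$ fixes the first level and $\gamma|_1=\gamma$, so $\gamma^k|_{1}=\gamma^k$ for every $k$. This is why the paper adds a remark treating positive powers of $\gamma$ via $L_u(\gamma^k)=L_u(\gamma)^k$.

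You instead use that each $L_v$ is a homomorphism, so $L_v(\alpha)$ lies in the subgroup for every $\alpha\in\langle\N\rangle$. This is the general form of that remark, valid for arbitrary words in $\N^{\pm1}$ rather than only positive powers of $\gamma$. You also handle the empty word via the first-level decomposition, which the paper's conjugation step (stated only for nonempty $v$) leaves implicit.

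There are two caveats. First, your parenthetical reason that $\N$ generates is not right as stated. The set $\N(G)\cup Q(\gamma)$ need not generate $\ZwrG$, because a contracting group need not be generated by its nucleus; for example, a finite group acting by rigid permutations of the first level has trivial nucleus. What does hold is $V_m(\ZwrG)=V_m(\langle\N\rangle)$. Since $\ZwrG=\langle G,\gamma\rangle$, and for $g\in G$ all states at a deep enough level lie in $\N(G)$, each $L_u(g)$ is a product of elements $L_{uw}(g|_w)$ with $g|_w\in\N(G)$ and an element of $V_m$. So contraction of $G$ is still used, inside this reduction, and your claim that ``no contraction argument is needed'' holds only once that reduction is granted.

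Second, finiteness of $\N$ is not part of this lemma; the paper asserts it when defining $\N$. Your sketch would not prove it anyway. Tracking the coordinate of the trivial transversal element only shows that $\gamma$ is its own state there. It says nothing about the states $a^{Kh^{f_i}}$ that arise at the other coordinates.
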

\begin{proof}
 By Lemma \ref{Lema3.1-Nekra}, for each non-empty word \(v\in Y^{*}\), there exists an element \(h_{v}\in V_{m}\) such that \(h_{v}(vw)=x_{1}w\), for all \(w\in Y^{\omega}\). Then, we have \(L_{v}(g)=h^{-1}_{v}L(g)h_{v}\), for every \(g\in V_{m}(\ZwrG)\). Then,
 \(L_{v}(g)\in\langle \mathcal{S}\cup \mathcal{S}_{1}\rangle\), for all \(g\in\N\) and non-empty \(v\in Y^*\).
 Since each element from \(V_{m}(\ZwrG)\) can be represented by a table
 \[\begin{pmatrix}
 v_{1}\quad v_{2}\quad\ldots\quad v_{n} \\ \alpha_{1}\quad \alpha_{2}\quad\ldots\quad\alpha_{n}\\ u_{1}\quad u_{2}\quad\ldots\quad u_{n}
 \end{pmatrix},\]
 where \(v_{i},u_{i}\in Y^{*}\) and \(\alpha_{i}\in\N\), it follows that
 \[g=L_{u_{1}}(\alpha_{1})L_{u_{2}}(\alpha_{2})\cdots L_{u_{n}}(\alpha_{n})\begin{pmatrix}
 v_{1}\quad v_{2}\quad\ldots\quad v_{n} \\ \id\quad \id\quad\ldots\quad \id\\ u_{1}\quad u_{2}\quad\ldots\quad u_{n}
 \end{pmatrix}\in \langle \mathcal{S}\cup \mathcal{S}_{1}\rangle,\]
 with \(v_i, u_i \in Y^*\) and \(\alpha_i \in \mathcal{N}\). It follows that every element lies in \(\langle\mathcal{S}\cup\mathcal{S}_1 \rangle\).

\end{proof}
\begin{rem}
 The previous proof is essentially the one of Nekrashevych in \cite{Nek2017}, the
 difference here is that we are working with the extended nucleus \(\N\) of \(\ZwrG\).
 We decided include this proof here in full to highlight the fact that the argument
 still works even if some state \(\alpha_i\) is a positive power of \(\gamma\).
 Indeed, if \(\alpha_i = \gamma^k\) for some \(k \in \mathbb{Z}_{>0}\), then
 \[L_{u_i}(\alpha_i) = L_{u_i}(\gamma^k) = L_{u_i}(\gamma)^k,\]
 which belongs in \(\langle\mathcal{S}\cup\mathcal{S}_1\rangle\).
\end{rem}
\begin{rem}
 In \cite{SkipperWitzelZaremsky2019}, the authors define an embedding map
 \(\iota_{v}\colon G\to V_m(G)\) given by
 \[(w)\iota_{v}(g)=\begin{cases}
 v(u)^{g},~\text{if}~w=vu~\text{for some}~u\in Y^{\omega} \\ w,~\text{otherwise},
\end{cases}\]
 with \(v\in Y^*\). This map has similar properties to the ones in Proposition \ref{prop:Propriedades-L} \cite{SkipperWitzelZaremsky2019,Zaremsky2026}.
 In this case, the authors assume that \(G\) is a finitely generated (coarsely
 diagonal) self-similar group and proved that \(V_{m}(G)\) is finitely generated by the set \(\mathcal{S}\cup\iota_{1}(G)\) \cite[Lemma 3.4]{SkipperWitzelZaremsky2019}.
\end{rem}
Now, to obtain a finite presentation, let us choose elements \(A_{x,y}, B_x \in V_m\),
for each pair \(x,y\in Y\), satisfying
\[(yw)A_{x,y}=xyw \quad\text{and}\quad (x_1w)B_x=xw\]
for all \(w \in Y^\omega\). We may assume \(B_{1}=\id\). Let \(\overline{A}_{x,y}\) and \(\overline{B}_x\) be
words over \(S\) representing these elements.

Given \(y_1,\ldots,y_n,y \in Y\), we define the word
\[\overline{A}_{y_1\cdots y_n, y} \coloneq \overline{A}_{y_n, y}\overline{A}_{y_{n-1}, y_n} \cdots \overline{A}_{y_1, y_2}.\]
Let \(A_{v,y}\) denote the image of \(\overline{A}_{v,y}\) in \(V_m\).

For every word \(v = y_1 \cdots y_n \in Y^n\) with \(n \ge 2\) and every
\(g\in\mathcal{N}\), the element \(L_v(g) \in V_m(\ZwrG)\) satisfies the identity
\[L_v(g) = A_{y_1\cdots y_{n-1}, y_n}^{-1}\, B_{y_n}^{-1}\, L(g) \, B_{y_n} \, A_{y_1\cdots y_{n-1}, y_n}.\]
For each \(v = y_1\cdots y_n \in Y^*\) with \(n\ge 2\) and each \(g \in \mathcal{N}\), consider the word \(\overline{L}_v(g)\) over \(\mathcal{S} \cup \mathcal{S}_1\) by
\[\overline{L}_v(g)\coloneqq\overline{A}_{y_1\cdots y_{n-1}, y_n}^{-1} \,
\overline{B}_{y_n}^{-1} \, L(g) \,
\overline{B}_{y_n} \,
\overline{A}_{y_1\cdots y_{n-1}, y_n},\]
where \(L(g)\) is a generator in \(S_1\).

Let \(\operatorname{Symm}(m^n)\) denote the subgroup of \(V_{m}\) all of whose elements are of the form
\[
\begin{pmatrix}
 v_{1} & v_{2} & \ldots & v_{m^{n}} \\
 \id & \id & \ldots & \id \\
 u_{1} & u_{2} & \ldots & u_{m^{n}}
 \end{pmatrix},
\]
where \(\{u_1,u_{2},\ldots,u_{m^{n}}\}=\{v_1,v_{2},\ldots,v_{m^{n}}\}=Y^{n}\). This subgroup is naturally isomorphic to the symmetric group of degree \(m^n\), with \(m = |Y|\).

For each \(x\in Y\), choose a generating set \(W_x\), consisting of words over \(S\), for the pointwise stabilizer \((V_m)_{xY^\omega}\) of the cone
\(C_x\). This stabilizer is isomorphic to the Higman--Thompson group
\(G_{m,m-1}\), and hence is finitely generated (see \cite{Higman1974}).
Therefore, \(W_x\) is a finite set. Next, we will consider the set
\(\mathcal{R}_1\) of the following finite collections of relations:
\begin{description}
 \item[(C) Commutation] Relations of the form
 \[[L_x(g_1),L_y(g_2)]=[L_{v_1}(g_1),L_{v_2}(g_2)]=[L(g),h]=1\]
 for all \(g_1,g_2,\in \N(G), x,y\in Y, v_1,v_2\in Y^2, h\in W_{x_1}\), where \(x\neq y\) and \(v_1\neq v_2\);
 \smallskip
 \item[(N) Nucleus] Relations of the form
 \[L(g_1)L(g_2)L(g_3) = 1\]
 for all \(g_1,g_2,g_3\in \N(G)\) such that \(g_1g_2g_3=1\) in \(G\);
 \smallskip
 \item[(S) Splitting] Relations of the form
 \[L(g)=L_{1y_1}(g|_{y_1})\cdot L_{1y_2}(g|_{y_2})\ldots L_{1y_m}(g|_{y_m})\cdot \bar{h},\]
 for all \(g\in\N\), where \(\bar{h}\) is a word in the generators \(S\) representing an element
 \(h\in\Symm(m^2)\) such that
 \[L(g)=L_{1y_1}(g|_{y_1})L_{1y_2}(g|_{y_2})\ldots L_{1y_d}(g|_{y_m})h.\]
 In particular case of \(g=\gamma\), considering its wreath recursion \(\gamma=(\gamma,\id,\dots,\id,x)\), these relations are
 \[L(\gamma)=L_{1y_1}(\gamma)\cdot L_{1y_2}(\id)\ldots L_{1y_{m-1}}(\id) L_{1y_m}(x),\]
 since \(\overline{h}\), in this case, is the
 word on generators \(S\) representing \(id\in\Symm(m^{2})\).
 \smallskip
 \item[(CB) Commutation Base] For \(i,j\in\{1,\ldots,m\}\), relations of the form
 \[\bigl[L(\gamma)^{L(f)}, L(\gamma)^{L(g)}\bigr]=1,\]
 where \(f,g\in \N(G)\).
\end{description}
\begin{rem}
 Compared with \cite[p.~150]{Nek2017}, while
 relations \textbf{(N)} remains the same, relations \textbf{(C)} and \textbf{(S)}
 are adapted to the extended nucleus \(\N\) of \(\ZwrG\). In particular, relations
 \textbf{(C)} involve the states of the generator \(\gamma\) of the \(\mathbb{Z}\)-
 factor through the elements of \(\N\).
\end{rem}

Next we aim to show that the set of defining relations for the group \(V_{m}(\ZwrG)\) is \(\mathcal{R}\cup \mathcal{R}_1\)
and then conclude that this group is finitely presented.
Let \(\Gamma\) be the group defined by the presentation
\(\langle \mathcal{S}\cup \mathcal{S}_1\mid \mathcal{R}\cup \mathcal{R}_1\rangle\). Since all relations \(\mathcal{R}\cup \mathcal{R}_1\) hold in
\(V_{m}(\ZwrG)\), it suffices to prove that all relations in \(V_{m}(\ZwrG)\) also hold in \(\Gamma\).

We identify \(\langle \mathcal{S}\rangle \le \Gamma\) with its image in \(V_m\), since a group word in \(\mathcal{S}\) is trivial
in \(\Gamma\) if and only if it is trivial in \(V_{m}\). The next two lemmas are taken verbatim from \cite{Nek2017}.

\begin{lemma}\cite[Lemma 5.12]{Nek2017}\label{lem:Lema5.12-Nekra}
 Suppose that \(u,v\in Y^{*}\) are non-empty. Let \(f\in V_{m}\) be such that \((uw)f=vw\), for all
 \(w\in Y^{\omega}\). Then \(f^{-1}L_{u}(g)f=L_{v}(g)\) holds in \(\Gamma\).
\end{lemma}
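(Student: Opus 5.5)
The plan is to show that the defining map \(L_u\) behaves correctly under conjugation by \(V_m\) inside \(\Gamma\), reducing everything to the case \(u=1\in Y\), where \(L_1=L\) is a generator in \(\mathcal{S}_1\). Recall that for \(|v|\ge 2\) the symbol \(L_v(g)\) in \(\Gamma\) is \emph{defined} as the word \(\overline{L}_v(g)=\overline{A}^{-1}_{y_1\cdots y_{n-1},y_n}\overline{B}^{-1}_{y_n}L(g)\overline{B}_{y_n}\overline{A}_{y_1\cdots y_{n-1},y_n}\). For \(|v|=1\) it is \(\overline{B}_{v}^{-1}L(g)\overline{B}_v\). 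In other words, \(L_v(g)=c_v^{-1}L(g)c_v\) for a fixed element \(c_v\in V_m\) satisfying \((1w)c_v=vw\). Writing \(c_u,c_v\) this way, the identity \(f^{-1}L_u(g)f=L_v(g)\) becomes
\[
\bigl(c_u f c_v^{-1}\bigr)^{-1}L(g)\bigl(c_u f c_v^{-1}\bigr)=L(g)\quad\text{in }\Gamma .
\]
Here \(h\coloneqq c_u f c_v^{-1}\in V_m\) satisfies \((1w)h=1w\) for all \(w\). Hence \(h\) lies in the pointwise stabilizer \((V_m)_{1Y^\omega}\).

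The key step is therefore to show that \(L(g)\) commutes in \(\Gamma\) with every element of the pointwise stabilizer of the cone \(C_1\). Since we identified \(\langle\mathcal{S}\rangle\le\Gamma\) with \(V_m\), the element \(h\) can be written as a word in the finite generating set \(W_{x_1}\) of \((V_m)_{x_1Y^\omega}\cong G_{m,m-1}\). Each \(h'\in W_{x_1}\) commutes with \(L(g)\) by the commutation relations \textbf{(C)}, namely \([L(g),h']=1\). Consequently \(h\) commutes with \(L(g)\) in \(\Gamma\), and the identity follows.

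Two details need care. The first is consistency of the definitions: that \(\overline{L}_v(g)\) really has the form \(c_v^{-1}L(g)c_v\) with \(c_v\) mapping \(C_1\) onto \(C_v\) by \(1w\mapsto vw\). This follows from the defining properties of \(A_{x,y}\) and \(B_x\): the element \(B_{y_n}\) sends \(C_1\) to \(C_{y_n}\), and \(A_{y_1\cdots y_{n-1},y_n}\) successively prepends \(y_{n-1},\dots,y_1\). One checks that their composite in the paper's right-action convention gives \(1w\mapsto vw\). Note also that \(B_1=\id\), which handles \(u=1\). The second detail is that \(g\) is required to range over \(\N\), and \textbf{(C)} as listed covers \(g\in\N\). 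If \textbf{(C)} only covers \(\N(G)\), I would use the splitting relation \textbf{(S)} to handle \(g=\gamma\), or simply read \textbf{(C)} as including \(\gamma\).

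The main obstacle I expect is precisely this convention bookkeeping: verifying that \(h=c_ufc_v^{-1}\) fixes \(C_1\) pointwise rather than some other cone, and that the generators of the stabilizer chosen in \(W_{x_1}\) are the ones appearing in \textbf{(C)}. Once these are checked, the argument is a one-line conjugation computation.
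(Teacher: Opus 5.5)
Your argument is correct and is essentially the proof the paper defers to (Nekrashevych's argument on p.~150). You write $\overline{L}_u(g)$ and $\overline{L}_v(g)$ as conjugates of $L(g)$ by elements $c_u,c_v\in V_m$ carrying $C_{x_1}$ onto $C_u,C_v$, observe that $c_ufc_v^{-1}$ fixes $C_{x_1}$ pointwise and so is a word in $W_{x_1}$, and conclude from the relations $[L(g),h]=1$ in \textbf{(C)}.

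One caveat: your fallback of handling $g=\gamma$ via \textbf{(S)} would be circular. Since $\gamma|_{1}=\gamma$, conjugating the factor $\overline{L}_{11}(\gamma)$ by an element of $W_{x_1}$ again requires $L(\gamma)$ to commute with an element fixing $C_{x_1}$ pointwise. So the argument genuinely needs \textbf{(C)} imposed for all $g\in\N$, which is the reading intended by the paper's remark after the relations.
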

\begin{proof}
 See \cite[p. 150]{Nek2017}.
\end{proof}
\begin{lemma}\cite[Lemma 5.13]{Nek2017}\label{lem:Lema5.13-Nekra}
 If \(u,v\in Y^{*}\) are incomparable, then \(\overline{L}_{u}(g)\) and \(\overline{L}_{u}(h)\)
 commute in \(\Gamma\), for all \(g,h\in\N\).
\end{lemma}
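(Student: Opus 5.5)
The plan follows Nekrashevych's argument for \cite[Lemma 5.13]{Nek2017}. The two ingredients are Lemma~\ref{lem:Lema5.12-Nekra}, which lets us move any $L_u$ by conjugation with elements of $V_m$, and the commutation relations \textbf{(C)}. The idea is to reduce an arbitrary incomparable pair $(u,v)$ to a pair of incomparable words for which \textbf{(C)} applies directly. (We read the statement as asserting that $\overline{L}_u(g)$ and $\overline{L}_v(h)$ commute in $\Gamma$.)

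First, I would treat the case $u,v$ non-empty. Since $u$ and $v$ are incomparable, the cones $C_u$ and $C_v$ are disjoint. I would choose a finite partition of $\C_m$ into cones containing both $C_u$ and $C_v$. Lemma~\ref{Lema3.1-Nekra} then gives $f\in V_m$ with $(uw)f=1w$ and $(vw)f=2w$ for all $w\in Y^\omega$; this is possible when $m\ge 3$. When $m=2$, I would instead take $(uw)f=x_1y_1w$ and $(vw)f=x_1y_2w$, i.e.\ target two distinct words of length $2$. Conjugating by $f$ and applying Lemma~\ref{lem:Lema5.12-Nekra} to each factor gives, in $\Gamma$,
\[
f^{-1}[\overline{L}_u(g),\overline{L}_v(h)]f=[L_{x}(g),L_{y}(h)] \quad\text{or}\quad [L_{v_1}(g),L_{v_2}(h)],
\]
with $x\neq y$ or $v_1\neq v_2\in Y^2$. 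Both commutators are trivial by \textbf{(C)}. There is a subtlety in how the $L_x(g)$ appearing in \textbf{(C)} are defined as words: they are conjugates of $L(g)$ by the fixed words $\overline{B}_x$. Lemma~\ref{lem:Lema5.12-Nekra} asserts the conjugation identity for exactly these words, so the identification is consistent.

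Second, I must check that the relations \textbf{(C)} cover all $g,h\in\N$ and not only the nucleus $\N(G)$. The states of $\gamma$ lie in $\N$, so I would first use the splitting relations \textbf{(S)} to rewrite $L_x(g)$, for $g\in Q(\gamma)$, as a product of elements $L_{xy}(g|_y)$ times an element of $V_m$ supported under $x$. For incomparable $x,y$, the factors in $V_m$ supported in $C_x$ commute with $L_y(\cdot)$. This follows because they lie in a conjugate of $W_{x_1}$, together with the relation $[L(g),h]=1$ for $h\in W_{x_1}$ and Lemma~\ref{lem:Lema5.12-Nekra}. For the remaining products, relation \textbf{(CB)} handles the commutation of conjugates of $L(\gamma)$, which represent the base group elements.

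I expect this second step to be the main obstacle: extending \textbf{(C)} from $\N(G)$ to all of $\N$ via \textbf{(S)}, \textbf{(CB)} and the $W_x$ relations while keeping track of which words live in disjoint cones. The empty-word case needs no separate treatment, since incomparability forces both $u$ and $v$ to be non-empty.
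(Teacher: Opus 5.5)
Your first step is the argument the paper intends when it cites Nekrashevych. Use Lemma~\ref{Lema3.1-Nekra} to get $f\in V_m$ carrying $C_u,C_v$ onto two standard disjoint cones, move both factors with Lemma~\ref{lem:Lema5.12-Nekra}, and invoke \textbf{(C)}. There is one small slip: for $m=2$ and $\{u,v\}=\{1,2\}$, no $f$ onto $C_{11},C_{12}$ exists, since a two-cone partition cannot be matched with one having at least three cones. In that case, however, \textbf{(C)} applies directly.

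The gap is in your second step. The paper's proof says to run Nekrashevych's argument with the extended nucleus $\N$. The remark after the list of relations says that \textbf{(C)} involves the states of $\gamma$ through elements of $\N$. So the commutation relations are meant to be imposed for all $g,g_1,g_2\in\N$. This also covers $[L(g),h]=1$ for $h\in W_{x_1}$, which Lemma~\ref{lem:Lema5.12-Nekra} itself needs for $g\in Q(\gamma)$. This is still a finite set of relations, and with it your first step alone proves the lemma.

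If you instead keep \textbf{(C)} restricted to $\N(G)$ and try to derive the missing relations, your sketch does not go through. The recursion $\gamma=(\gamma,\id,\dots,\id,x)$ gives $\gamma|_1=\gamma$. So applying \textbf{(S)} to $\overline{L}_z(\gamma)$, $z\in Y$, reproduces a factor $\overline{L}_{z1}(\gamma)$, then $\overline{L}_{z11}(\gamma)$, and so on. The rewriting never lands in words covered by the $\N(G)$-relations. Relation \textbf{(CB)} does not help either. It only says that the conjugates $L(\gamma)^{L(f)}$ and $L(\gamma)^{L(g)}$ commute, and these are all supported in the single cone $C_{x_1}$; it encodes commutativity of the base group. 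It says nothing about $[\overline{L}_z(\gamma),\overline{L}_{z'}(h)]$ for $z\neq z'$. So drop step~2 and take \textbf{(C)} over the whole extended nucleus.
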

\begin{proof}
 See \cite[p. 151]{Nek2017} and consider the extended nucleus \(\N\) on the proof.
\end{proof}

Combining Lemma~\ref{lem:Lema5.12-Nekra} with the relations \textbf{(S)}, we obtain the following splitting relation:
\begin{description}
 \item[(S')] For each \(g\in\N\) and non-empty \(v\in Y^*\), if \(h\in V_m\) satisfies
 \[L_v(g) = L_{vy_1}(g|_{y_1})L_{vy_2}(g|_{y_2}) \cdots L_{vy_m}(g|_{y_m})h,\]
 we have
 \[L_v(g)=\overline{L}_{vy_1}(g|_{y_1})\overline{L}_{vy_2}(g|_{y_2})\cdots\overline{L}_{vy_m}(g|_{y_m})h.\]
 For the particular case such that \(g=\gamma\),
 \[L(\gamma)=L_{vy_1}(\gamma|_{y_1})L_{vy_2}(\id|_{y_2})\ldots L_{vy_{m-1}}(\id|_{y_{m-1}}) L_{vy_m}(x|_{y_m})\]
 implies
 \[L(\gamma)=\overline{L}_{vy_1}(\gamma|_{y_1})\overline{L}_{vy_2}(\id|_{y_2})\ldots \overline{L}_{vy_{m-1}}(\id|_{y_{m-1}})\overline{L}_{vy_m}(x|_{y_m})\]
\end{description}

We need to prove the following new relation.
\begin{lemma}\label{lem:comuta-gamma}
 If \(f_1, \dots, f_r, g_1, \dots, g_s \in G \), then $$\overline{L}(\gamma)^{n_1\overline{L}(g_1)+ \dots + n_r\overline{L}(g_r)} \text{ and }\overline{L}(\gamma)^{m_1\overline{L}(h_1)+ \dots + m_s\overline{L}(h_s)}$$ commute in \(\Gamma\).
\end{lemma}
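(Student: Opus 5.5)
We read the exponent notation as in Subsection \ref{FA.de.ZePSL}: $\overline{L}(\gamma)^{n_1\overline{L}(g_1)+\dots+n_r\overline{L}(g_r)}$ denotes the product $\prod_{k}\bigl(\overline{L}(g_k)^{-1}\overline{L}(\gamma)\overline{L}(g_k)\bigr)^{n_k}$. This is the image in $\Gamma$ of a base element $a^{n_1Kg_1+\dots+n_rKg_r}$ of $\ZwrG$. The claim is that any two such ``base'' elements of $\Gamma$ commute.

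The reduction is formal. Any group generated by a family of pairwise commuting elements is abelian. So it suffices to show that the generators $\overline{L}(\gamma)^{\overline{L}(f)}$ and $\overline{L}(\gamma)^{\overline{L}(g)}$ commute in $\Gamma$ for all $f,g\in G$. The two products in the statement then lie in the abelian subgroup $\langle \overline{L}(\gamma)^{\overline{L}(g)}\mid g\in G\rangle$, so they commute.

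When $f,g\in\N(G)$, the required commutation is exactly relation \textbf{(CB)}, so the remaining work is to pass from the nucleus to all of $G$. I would use the fact that $\N(G)$ generates $G$ (we replaced the group by $\langle\N\rangle$). Write $f=f_1\cdots f_p$ and $g=g_1\cdots g_q$ with $f_i,g_j\in\N(G)$. Using relations \textbf{(N)}, I would rewrite $\overline{L}(f)$ as the word $\overline{L}(f_1)\cdots\overline{L}(f_p)$, and similarly for $g$; this fixes a consistent interpretation of $\overline{L}(f)$ in $\Gamma$. Conjugating by $\overline{L}(f)^{-1}$, the commutator becomes $[\overline{L}(\gamma),\overline{L}(\gamma)^{\overline{L}(f^{-1}g)}]$, so it suffices to show
\[
\bigl[\overline{L}(\gamma),\overline{L}(\gamma)^{\overline{L}(h)}\bigr]=1 \quad\text{for every } h\in G.
\]

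I expect this last step to be the main obstacle: a direct induction on the length of $h$ in $\N(G)$ does not close, because \textbf{(CB)} only covers $h=f^{-1}g$ with $f,g\in\N(G)$. My first attempt would use contraction together with the splitting relations \textbf{(S')}. Expand $\overline{L}(\gamma)$ and $\overline{L}(h)$ at level $n$, where $n$ is large enough that all sections $h|_v$ with $|v|=n$ lie in $\N(G)$. By \textbf{(S')} and the wreath recursion $\gamma=(\gamma,\id,\dots,\id,x)$, the element $\overline{L}(\gamma)$ becomes a product of terms $\overline{L}_{v}(\gamma)$ and $\overline{L}_{v'}(x)$ supported on incomparable cones, times an element of $V_m$. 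Conjugating by $\overline{L}(h)$ permutes these cones and replaces each term by a conjugate of $\gamma$ or of a state of $\gamma$ by a nucleus element. Lemma~\ref{lem:Lema5.13-Nekra} gives commutation across distinct incomparable cones. On a common cone, Lemma~\ref{lem:Lema5.12-Nekra} transports the situation back to the cone $C_1$, where one is again in the \textbf{(CB)} situation with nucleus elements. The delicate point is bookkeeping the $V_m$-parts $h$ produced by the splitting, and checking that the states of $\gamma$ other than $\gamma$ itself, which are elements of $G$, do not break the pattern. If they do, I would enlarge \textbf{(CB)} to all pairs in $\N$, which keeps the presentation finite.
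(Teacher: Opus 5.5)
Your overall route is the paper's: reduce to the commutation of two single conjugates $\overline{L}(\gamma)^{\overline{L}(g)}$ and $\overline{L}(\gamma)^{\overline{L}(h)}$, split with \textbf{(S')}, separate incomparable cones with Lemma~\ref{lem:Lema5.13-Nekra}, transport with Lemma~\ref{lem:Lema5.12-Nekra}, and let contraction of $G$ bring you down to \textbf{(CB)}. However, the point you leave open, namely that the states of $\gamma$ lying in $G$ ``do not break the pattern'', is where the argument actually lives, and your fallback would not rescue it. Suppose that on a common cone a section $x'\in Q(x)\subseteq G$ of $\gamma$ met a nontrivial nucleus conjugator $n$. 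You would then need $[\overline{L}(x'),\overline{L}(x')^{\overline{L}(n)}]=1$. This need not hold in $G$, so it is not a relation of $V_m(\ZwrG)$. Any enlargement of \textbf{(CB)} covering such terms would therefore change the group. The missing idea is the structural fact the paper records when it writes $(y_m)\sigma_1=y_m$ and $(y_m)\sigma_2=y_m$. The letter $y_m$ coming from $\rho\colon\mathcal{H}=\mathcal{G}\to\mathcal{G}$ is fixed by all of $\ZwrG$, and every $g\in G$ has trivial section there, $g|_{y_m}=g^{\rho}=e$.

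With this fact, splitting one level gives
\[
\overline{L}(\gamma)^{\overline{L}(g)}=\overline{L}_{1c}(\gamma)^{\overline{L}_{1c}(g|_{y_1})}\,\overline{L}_{1y_m}(x),\qquad c=(y_1)^{g},
\]
and the analogous formula for $h$ with $d=(y_1)^{h}$. The factor $\overline{L}_{1y_m}(x)$ is literally the same in both conjugates, because $\sigma_g$ fixes the cone $1y_m$ pointwise and $g|_{y_m}=e$. It also commutes with the $\gamma$-factors, since $c,d\neq y_m$. Hence the commutator is trivial if $c\neq d$. If $c=d$, it is conjugate by an element of $V_m$ to the commutator for the pair of sections $(g|_{y_1},h|_{y_1})$. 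Contraction ends this recursion in \textbf{(CB)} after finitely many steps, so no new relations are needed.

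Peeling off one level at a time, and never expanding $\overline{L}_{1y_m}(x)$, also removes the $V_m$-bookkeeping you worry about: the only permutations are the $\sigma_g$, which Lemma~\ref{lem:Lema5.12-Nekra} absorbs. It also makes your preliminary reduction to the pair $(e,h)$ unnecessary.

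Finally, \textbf{(N)} alone does not make $\overline{L}(f)$ independent of the chosen word when $f\notin\N(G)$. You do not need this independence. Run the recursion on words, and use \textbf{(N)} to collapse the word inside a cone only at a depth where all sections of its prefixes lie in $\N(G)$.
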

\begin{proof}
It is enough to prove 
\[\left[\overline{L}(\gamma)^{\overline{L}(g)}, \overline{L}(\gamma)^{\overline{L}(h)}\right] = 1,\]
for $g, h \in G$. But
\[L(g)=L_{1y_1}(g|_{y_1})\cdot L_{1y_2}(g|_{y_2})\ldots L_{1y_m}(g|_{y_m})\cdot \sigma_1, \, (y_m)\sigma_1 = y_m,\]

\[L(h)=L_{1y_1}(h|_{y_1})\cdot L_{1y_2}(h|_{y_2})\ldots L_{1y_m}(h|_{y_m})\cdot \sigma_2, \, (y_m)\sigma_2 = y_m,\]

\[L(\gamma)=L_{1y_1}(\gamma)\cdot L_{1y_2}(\id)\ldots L_{1y_{m-1}}(\id) L_{1y_m}(x),\]
then the states of $\left[\overline{L}(\gamma)^{\overline{L}(g)}, \overline{L}(\gamma)^{\overline{L}(h)}\right]$ are not trivial only when
\[\left[\overline{L}(\gamma)^{\overline{L}(g)}, \overline{L}(\gamma)^{\overline{L}(h)}\right] = \left[\overline{L}(\gamma^g), \overline{L}(\gamma^h)\right] = \left[\overline{L}_{1y_1}(\gamma^{g|_{y_1}}), \overline{L}_{1y_1}(\gamma^{h|_{y_1}})\right] = \]
\[ = \overline{L} \left(\left[\overline{L}_{y_1}(\gamma^{g|_{y_1}}), \overline{L}_{y_1}(\gamma^{h|_{y_1}})\right]\right).\]
Since $G$ is contracting self-similar, there exist a level that the states of $g$ and $h$ belong to $\mathcal{N}(G)$ and so $\left[\overline{L}(\gamma)^{\overline{L}(g)}, \overline{L}(\gamma)^{\overline{L}(h)}\right] = 1$. 
\end{proof}
Finally, we will prove that any word over $\mathcal{S}\cup\mathcal{S}_1$ which is trivial in $V_m(\mathbb{Z}\wr_XG)$ is also trivial in \(\Gamma\). It follows from Lemmas~\ref{lemma:SRN-Generation}~and~\ref{lem:Lema5.12-Nekra}
that every element of \(\Gamma\) can be written as
\[L(g_1)^{h_1} L(g_2)^{h_2}\cdots L(g_n)^{h_n}h,\]
with \(h,h_i\in V_{m}\) and \(g_i\in\N\). Let \(n_1\) be such that the element \(h_n\) can be written as
\[h_{n}=\begin{pmatrix}
 v_{1} & v_{2} & \ldots & v_{m^{n_{1}}} \\
 \id & \id & \ldots & \id \ \\
 u_{1} &u_{2}& \ldots & u_{m^{n_{1}}}
 \end{pmatrix},\]
where \(\{u_1, u_2,\ldots,u_{m^{n_1}}\}=Y^{n_1}\). By relations \textbf{(S')} and Lemma~\ref{lem:Lema5.12-Nekra}, we write
\[L(g_n)=\prod_{v \in Y^{n_1-1}} \overline{L}_{1 v}(g_n|_v)~\alpha\]
for some \(\alpha \in \Symm(m^{n_1})\). By Lemma~\ref{lem:Lema5.13-Nekra} the factors
\(\overline{L}_{x_1 v}(g_n | v)\) commute with each other. Hence, for every \(v\in Y^{n_1-1}\) there exists
\(i\in\{1,2,\ldots,m^{n_{1}}\}\) such that \(1v=u_i\). By Lemma~\ref{lem:Lema5.12-Nekra}, we have
\[\overline{L}_{1 v}(g_n | v)^{h_n}=\overline{L}_{v_i}(g_n | v),\]
so that \(L(g_n)^{h_n}\) can be rewritten as a product of \( \alpha^{h_n} \) followed by a product of elements
of the form \(\overline{L}_v(g_{v,n})\) for some \(v\in Y^{n_1}\) and \(g|_{v,n}\in\N\).

Proceeding by induction, we can conclude that every element of \(\Gamma\) can be written as
\begin{equation}\label{eq:elementoemGamma}
 g=\overline{L}_{v_1}(g_1)~\overline{L}_{v_2}(g_2)\cdots\overline{L}_{v_\ell}(g_\ell)h
\end{equation}
\noindent for \(v_i\in Y^*\), \(g_i\in\N\) and \(h\in V_m\). Notice that in case of not
all words \(v_i\) have the same length, we may consider \(l_{\min}\coloneqq\min_i |v_i|\)
and choose an index \(i\) with \(|v_i|=l_{\min}\). Let
\[k\coloneqq\min\{|v_j|\mid|v_j|>l_{\min}\}
\]
be the second smallest length occurring among all \(v_j\). By \textbf{(S')}, we expand the factor
\(\overline{L}_{v_i}(g_i)\) as
\[\overline{L}_{v_i}(g_i)=\prod_{u\in Y^{k-|v_i|}}\overline{L}_{v_i u}(g_i |_u)~\alpha_i,\]
with \(\alpha_i \in \Symm(m^k)\). By Lemma~\ref{lem:Lema5.12-Nekra}, we can move \(\alpha_i\) to the front of
the product in (\ref{eq:elementoemGamma}). Observe that every new index \(v_i u\) has
length exactly \(k\). Hence the shortest length among the indices in the product strictly
increases from \(l_{\min}\) to at least \(k\), while the longest length remains unchanged. Iterating this procedure yields a strictly increasing sequence of minimal lengths, bounded above by the maximum length, so the process must terminate. The resulting expression has the same form as (\ref{eq:elementoemGamma}), but now all indices \(v_i\) have the same length.

Therefore, we may assume that in (\ref{eq:elementoemGamma}) all words \(v_i\) have the same length \(k\).
Under this assumption, the product
\[L_{v_1}(g_1) L_{v_2}(g_2) \cdots L_{v_\ell}(g_\ell)\]
does not alter the prefix of length \(k\) of any word \(w \in Y^\omega\).
Since \(g\) is trivial in \(V_{m}(\ZwrG)\), it follows that \(h\) must also preserve those prefixes.
Hence \(h\) can be written as
\[h = \prod_{v \in Y^k} L_v(h_v)\]
for some elements \(h_v\in V_{m}\). By Lemmas \ref{lem:Lema5.12-Nekra} and \ref{lem:Lema5.13-Nekra}, we may rearrange the factors in (\ref{eq:elementoemGamma}) so that
\[g=\prod_{v \in Y^k} f_v,\]
where for each \(v \in Y^k\),
\[
f_v = L_v(h_v) \, L_v(g_{v,1}) \, L_v(g_{v,2}) \cdots L_v(g_{v,\ell_v}),
\]
with \(h_v \in V_m\) and \(g_{v,i} \in \N\).

Notice that each \(f_v\) is trivial in \(V_{m}(\ZwrG)\), because the product over all \(v\) equals \(g = 1\) and
the factors \(f_v\) act on disjoint cones (so they can only cancel within themselves).
The triviality of \(f_v\) implies that \(h_v\) lies in \(\Symm(m^l)\) for some \(l\), and moreover the action of \(h_v g_{v,1} g_{v,2} \cdots g_{v,\ell_v}\) on the set \(Y^l\) is the identity.
Consequently, using the splitting relations \textbf{(S')}, we can rewrite each \(f_v\) as a product of elements of the form \(L_{v u}(g)\) with \(u\in Y^l\).
Thus, we may assume that all \(h_v\) are trivial.

Now, the fact that \(f_v\) is trivial reduces to the identity
\[g_{v,1} g_{v,2} \cdots g_{v,\ell_v} = 1 \quad \text{in } \ZwrG.\]
Since we may assume \(\ZwrG\) is generated by the extended nucleus \(\N\), the product may contain occurrences of
\(\gamma\). By Lemma~\ref{lem:comuta-gamma}, the elements \(\overline{L}(\gamma)^{\overline{L}(g)}\) commute
with each other in \(\Gamma\) for all \(g\in\N\). Hence, we can move all factors involving \(\gamma\) to the
front of the expression for \(f_v\), yielding
\[f_v = \overline{L}(\gamma)^t \cdot \prod_{i} \overline{L}_{v u_i}(g_i),\]
where \(g_i\in\N(G)\), \(u_i \in Y^l\), and \(t\in \mathbb{Z}\).

Since \(f_v\) is trivial in \(V_m(\ZwrG)\), its image under the natural projection to \(V_m(G)\) must be
trivial. This projection sends \(\overline{L}(\gamma)\) to the identity (since \(\gamma\) projects to \(1\) in \(G\)), so we must have \(t=0\). Consequently,
\[\prod_{i}\overline{L}_{v u_i}(g_i)=1\]
and hence \(\prod g_i=1\) in \(G\).
All these reductions show that every relation of \(V_m(\ZwrG)\) follows from
\(\mathcal{R}\cup\mathcal{R}_1\), which is finite. Hence \(\Gamma\cong V_m(\ZwrG)\), and consequently \(V_m(\ZwrG)\) is finitely presented.
\begin{rem}\label{rem:Adaptacao-para-AwrG}
 The final step of the previous proof, where the projection of
 \(\overline{L}(\gamma)\) to \(V_m(G)\) is used to conclude that the exponent
 \(t\) must be zero, is easily adapted to the case where the base group is \(\mathbb{Z}^{d}\). Indeed, in this case the extended nucleus becomes
 \[
 \N\coloneqq \mathcal{N}(G) \cup Q(\gamma_1)\cup \dots\cup Q(\gamma_d),
 \]
 where \(\gamma_1, \dots, \gamma_d\) generate the \(\mathbb{Z}\)-factors. The extended nucleus remains finite, for the same argument. In the normalization of \(f_v\), any occurrence of either a generator \(\gamma_i\) is moved to the front, yielding a factor \(\overline{L}(\gamma_1)^{t_1} \cdots \overline{L}(\gamma_d)^{t_d}\), which is
 also annihilated by the projection. Since each \(\gamma_i\) projects to the identity
 in \(G\), the same projection argument makes all exponents \(t_i\) vanish.
 Consequently, the proof of finite presentability of \(V_m(\mathbb{Z} \wr_X G)\) extends \textit{verbatim} to \(V_m(\mathbb{Z}^{d} \wr_X G)\).
\end{rem}

\subsubsection{Finite abelianization}\label{subsubsec:finiteabelizanization}
For a given \(G\leq\Am\) contracting self-similar group, the associated SRN-group of
\(V_{m}(\ZwrG)\) is finitely presented by Theorem \ref{thmx:SRN-FinitePresentation} and \([V_{m}(\ZwrG),V_{m}(\ZwrG)]\) is simple by Theorem \ref{thm:Nekra-SimpleCommutator}.
Nekrashevych's abelianization result \cite[Theorem 9.14]{Nek2004} yields that
\begin{enumerate}
 \item If \(m\) is even, then \(V_m(\ZwrG)^{ab}\) is obtained from the \((\ZwrG)^{ab}\) pass to quotient by the relations
 \begin{equation}
 \overline{g}=\overline{g|_1}+\overline{g|_2}+\ldots+\overline{g|_m},
 \end{equation}
where \(g\in\ZwrG\).
 \item If \(m\) is odd, \(V_m(\ZwrG)^{ab}\) is obtained from the \((\ZwrG)^{ab}\oplus \mathbb{Z}/2\mathbb{Z}\) pass to quotient by the relations
 \begin{equation}
 \overline{g} =(\overline{g|_1}+\overline{g|_2}+\ldots+\overline{g|_m})+\sgn(g),
 \end{equation}
 where \(g\in\ZwrG\) and, if \(g\) is even permutation on the
 first level of \(\T_m\), \(\sgn(g)\in\mathbb{Z}/2\mathbb{Z}\) is \(0\); otherwise, \(\sgn(g)=1\).
\end{enumerate}
In both cases, if \(V_{m}(\ZwrG)^{ab}\) is finite, then \([V_{m}(\ZwrG),V_{m}(\ZwrG)]\)
has finite index in \(V_m(\ZwrG)\) and, consequently, it is a finitely presented simple group.
If \(V_{m}(\ZwrG)^{ab}\) is infinite and it may not be finitely presented.
Thus, we adapt Zaremsky's strategy in \cite{Zaremsky2025b} for finitely presented self-similar groups.
The idea is to prove that we can produce an embedding of \(\ZwrG\) into \(\Amm\)
for a \(m'\in\mathbb{Z}_{>0}\), where \(m'=mr\), for some \(r\in\mathbb{Z}_{>0}\),
such that this copy of \(\ZwrG\) remains self-similar, for every \(G\) self-similar group,
but now the associated SRN-group \(V_{m'}(\ZwrG)\) has a finite abelianization. Consequently, the
commutator subgroup \([V_{m'}(\mathbb{Z}\wr_X G), V_{m'}(\mathbb{Z}\wr_X G)]\) has finite
index in \(V_{m'}(\mathbb{Z}\wr_X G)\), hence is a finitely presented simple group and we
are done.

Let us fix a self-similar representation of \(\ZwrG\) on the \(m\)-ary tree, where \(m=|Y|\).
For each \(g\in\ZwrG\), let us consider its wreath recursion
\[(g|_1,\dots,g|_m)\sigma.\]
Let \(r\) be an even positive integer. We construct a new faithful self-similar representation
of \(\ZwrG\) on the tree of degree \(m'\coloneqq rm\), by repeating each state \(r\) times. Explicitly, for each \(g\in\ZwrG\), we consider the following wreath recursion
\[(g|_1,\dots,g|_m,\dots,g|_1,\dots,g|_m)\sigma^{\oplus r},\]
where we have \(rm\) states and \(\sigma^{\oplus r}\in\Symm(rm)\) acts like \(\sigma\) on \(\{1,\dots,m\}\) and acts like
\(\sigma\) in a natural way on \(\{m+1,\dots,2m\}\) and so on. This new representation is
faithful and self-similar since the original action of \(\ZwrG\) is. Abusing notation, we
continue to denote the group by \(\ZwrG\). Hence, we embedded it into \(\Amm\)
as a self-similar group. Like Zaremsky, we will work just with even \(m'\), which implies that
we take even \(r\in\mathbb{Z}_{>0}\).

Given that \(m'\) is even, it follows from Nekrashevych's abelianization result \cite[Theorem 9.14]{Nek2004} that \(V_{m'}(\ZwrG)^{ab}\) is the quotient of \((\ZwrG)^{ab}\) by the relations
\begin{equation}\label{eq:relacoes-abelianizacao}
 \overline{g}=r(\overline{g|_1}+\cdots+\overline{g|_m}),~\forall~g\in\ZwrG.
\end{equation}
Since replacing \(\ZwrG\) by \(\langle\N\rangle\) does not alter \(V_{m'}(\ZwrG)\), we may
assume the extended nucleus \(\N\) as a generating set of \(\ZwrG\). It will thus be
sufficient to impose (\ref{eq:relacoes-abelianizacao}) on the elements of it. For convenience,
we may represent the extended nucleus as \(\N=\{a_1,\dots,a_n\}\).

Now, we will prove that if we take the free abelian group generated by \(\overline{a}_1,\dots,\overline{a}_{n}\) and take the quotient with respect to the relation
\[\overline{a}_{i}=r((\overline{a}_{i})_{1}+\dots+(\overline{a}_{i})_{m}),~\forall~i\in\{1,\dots,n\}.\]
we get a finite abelian group for some even \(r\). This is equivalent to prove that if we take
\(\mathbb{R}^{n}\) with the basis \(\{\overline{a}_{1},\dots,\overline{a}_{n}\}\) and
take the quotient modulo the span of all the \(\overline{a}_{i}-r((\overline{a}_{i})_{1}+\dots+(\overline{a}_{i})_{m})\), we get the null vector
space for some even \(r\). Thus, consider a matrix \(M\in M_{n}(\mathbb{R})\) such that
the \(i\)-th column is \((\overline{a}_{i})_{1}+\dots+(\overline{a}_{i})_{m}\).

Relations (\ref{eq:relacoes-abelianizacao}) imply that
\[V_{m'}(\ZwrG)^{ab}\cong \mathbb{Z}^n / \operatorname{Im}(I_n-rM).\]
Hence, this abelianization is finite if and only if \(I_n-rM\) is invertible over
\(\mathbb{R}\), i.e., \(\det(I_n-rM)\neq 0\). Since \(M\) has only finitely many
eigenvalues, there are infinitely many even integers \(r\) such that it is not an
eigenvalue of \(M\). For such \(r\), we have \(V_{m'}(\ZwrG)^{ab}\) is finite.
\hfill\(\square\)

\begin{rem}
The same argument also holds for \(\mathbb{Z}^{d} \wr_X G\). If we 
consider the extended nucleus in this case as in
Remark \ref{rem:Adaptacao-para-AwrG}, the matrix \(M\) is defined 
using the states of all generators in \(\N\) and a similar
choice of an even integer \(r\) with
\(\det(I -rM)\neq0\) forces the abelianization of \(V_{m'}(\mathbb{Z}^{d} \wr_X G)\) to be finite.
\end{rem}

\subsection{Proof of Theorem \ref{thmx:SRN-FinitePresentation} - Item 2}
From Item (1) of Theorem \ref{thmx:SRN-FinitePresentation}, we get that \(V_{m}(\mathbb{Z}^{d}\wr_{X}G)^{ab}\)
is finite. This implies that \([V_{m}(\mathbb{Z}^{d}\wr_{X}G),V_{m}(\mathbb{Z}^{d}\wr_{X}G)]\) has finite index
in \(V_{m}(\mathbb{Z}^{d}\wr_{X}G)\). Consequently, \([V_{m}(\mathbb{Z}^{d}\wr_{X}G),V_{m}(\mathbb{Z}^{d}\wr_{X}G)]\) is
finitely presented, since \(V_{m}(\mathbb{Z}^{d}\wr_{X}G)\) is finitely presented, and the group
\(H\coloneqq (\mathbb{Z}^{d}\wr_{X}G)\cap [V_{m}(\mathbb{Z}^{d}\wr_{X}G),V_{m}(\mathbb{Z}^{d}\wr_{X}G)]\) has
finite index in \(\mathbb{Z}^{d}\wr_{X}G\). Considering the normal core of \(H\) in \(\mathbb{Z}^{d}\wr_{X}G\),
we get a normal subgroup \(H_{\mathbb{Z}^{d}\wr_{X}G} \trianglelefteq \mathbb{Z}^{d}\wr_{X}G\) of finite index.
By the Kaloujnine--Krasner Theorem,
\[\mathbb{Z}^{d}\wr_{X}G \hookrightarrow H_{\mathbb{Z}^{d}\wr_{X}G}\wr ((\mathbb{Z}^{d}\wr_{X}G)/H_{\mathbb{Z}^{d}\wr_{X}G}).\]
Once \((\mathbb{Z}^{d}\wr_{X}G)/H_{\mathbb{Z}^{d}\wr_{X}G}\) is finite, by Proposition \ref{prop:Mergulho-Zaremsky}, we have
\[[V_{m}(\mathbb{Z}^{d}\wr_{X}G),V_{m}(\mathbb{Z}^{d}\wr_{X}G)]\wr((\mathbb{Z}^{d}\wr_{X}G))/H_{\mathbb{Z}^{d}\wr_{X}G})\hookrightarrow [V_{m}(\mathbb{Z}^{d}\wr_{X}G),V_{m}(\mathbb{Z}^{d}\wr_{X}G)].\]
Thus,
\[\mathbb{Z}^{d}\wr_{X}G\hookrightarrow [V_{m}(\mathbb{Z}^{d}\wr_{X}G),V_{m}(\mathbb{Z}^{d}\wr_{X}G)].\]
Therefore, \(\mathbb{Z}^{d}\wr_{X}G\) embeds into the finitely presented simple group \([V_{m}(\mathbb{Z}^{d}\wr_{X}G),V_{m}(\mathbb{Z}^{d}\wr_{X}G)].\)

\hfill\(\square\)

\subsection{Examples} $(i)$ By Example \ref{4.8} and the Belk and Matucci result, the group \(C_2 \wr(C_2 \wr\mathbb{Z})\) embed in a finitely presented simple group. By Example \ref{4.8} and Theorem \ref{thmx:SRN-FinitePresentation}, the group \(\mathbb{Z}\wr(C_2\wr\mathbb{Z})\) embed in a finitely presented simple group. 

\,

\noindent $(ii)$ Nathan Corwin proved that, for every \(d\in\mathbb{Z}_{>0}\), 
\(\mathbb{Z}^{d}\wr\mathbb{Z}^{d}\)
embeds into finitely presented simple group, which is the Brin's group 
\(dV\) \cite[Theorem 27]{Corwin2013}. However, if we consider 
\(\mathbb{Z}^{d+1}\wr\mathbb{Z}^{d+1}\), it does not embed into 
\(dV\). In \cite{DSS}, the authors proved that \(\mathbb{Z}^{d}\wr\mathbb{Z}^{d}\) has a self-similar
representation generated by the following elements of \(\mathcal{A}_{4}\)
\begin{equation*}
\gamma _{1}=(\gamma _{d},e,\gamma _{1},\alpha _{1}),\,\gamma _{2}=(\gamma
_{1},e,\gamma _{2},e),\dots,\gamma _{d}=(\gamma _{d-1},e,\gamma _{d},e),
\end{equation*}%
\begin{equation*}
\alpha _{1}=(e,\alpha _{1},\alpha _{d},e)(0\,1),\,\alpha _{2}=(\alpha
_{2},\alpha _{2},\alpha _{1},e),\dots,\alpha _{d}=(\alpha _{d},\alpha
_{d},\alpha _{d-1},e).
\end{equation*}%
Applying Nekrashevych's abelianization result \cite[Theorem 9.14]{Nek2004}, we get that \(V_{4}(\mathbb{Z}^{d}\wr\mathbb{Z}^{d})=[V_{4}(\mathbb{Z}^{d}\wr\mathbb{Z}^{d}),V_{4}(\mathbb{Z}^{d}\wr\mathbb{Z}^{d})]\), which means that \(V_{4}(\mathbb{Z}^{d}\wr\mathbb{Z}^{d})\) is a finitely presented simple group. Thus, for every \(d\in\mathbb{Z}_{>0}\), \(\mathbb{Z}^{d}\wr\mathbb{Z}^{d}\) into the SRN-group \(V_{4}(\mathbb{Z}^{d}\wr\mathbb{Z}^{d})\), where the degree of the tree is
fixed \(m=4\).

Considering this last example, we have the following question.
\begin{question}
    Are the SRN-groups \(V_{4}(\mathbb{Z}^{d}\wr\mathbb{Z}^{d})\) and
    \(V_{4}(\mathbb{Z}^{l}\wr\mathbb{Z}^{l})\) isomorphic, for every \(d,l\in\mathbb{Z}_{>0}\)?
\end{question}

\subsection*{Declaration of AI Assistance}
During the preparation of this manuscript, the authors used an AI-assisted tool (ChatGPT-5) for language revision, formatting, and to assist in the development of some argumentative insights (Theorem \ref{thmx:PSL-FinitelyPresentedTransitiveSelfSimilar}, item (2)). The AI was not used to generate or validate mathematical content, theorems, or proofs, and is not listed as an author. The authors reviewed all AI-generated suggestions and assume full responsibility for the final content.

\end{document}